\numberwithin{equation}{section}
\newtheorem{theorem}{Theorem}[section]{\bf}{\it}
\newtheorem{lemma}[theorem]{Lemma}{\bf}{\it}
\newtheorem{proposition}[theorem]{Proposition}{\bf}{\it}
\newtheorem{corollary}[theorem]{Corollary}{\bf}{\it}
{\bf}{\it} % Main Theorems, numbered A,B,...
{\bf}{\it}
\newtheorem*{theorem*}{Theorem}
\newtheorem{remark}[theorem]{Remark}%[theorem]{Remark}{\bf}{\it}
\newtheorem*{remark*}{Remark}
{\bf}{\it}
{\bf}{\it}
\newtheorem{definition}[theorem]{Definition}
\newtheorem*{definition*}{Definition}
\newtheorem*{example*}{Example}
\theoremstyle{remark}
\theoremstyle{definition}
\theoremstyle{remark}
\newcommand{\diam}{\operatorname{diam}}
\newcommand{\Sbb}{\mathbb S}
\newcommand{\R}{\mathbb R}
\newcommand{\C}{\mathbb C}
\newcommand{\N}{\mathbb N}
\newcommand{\loc}{{\operatorname{loc}}}
\newcommand{\spt}{\operatorname{spt}}
\newcommand{\dist}{{\operatorname{dist}\,}}
\newcommand{\id}{{\operatorname{id}}}
\newdimen\vintkern\vintkern11pt
\def\vint{-\kern-\vintkern\int}
\newcommand{\dn}{\;\mathrm{d}}
\newcommand{\norm}[1]{\lVert #1 \rVert}
\newcommand{\grad}{\nabla}
\newcommand{\bS}{\mathbb{S}}
\newcommand{\cF}{\mathcal{F}}
\newcommand{\vol}{\mathrm{vol}}
\newcommand{\dR}{\mathrm{dR}}
\newcommand{\interior}{\mathrm{int}}
\newcommand{\cB}{\mathcal{B}}
\newcommand{\ol}{\overline}
\newcommand{\cU}{\mathcal{U}}
\newcommand{\set}[1]{\{\, {#1} \,\}}
\newcommand{\sset}[1]{\{{#1}\}}
\newcommand{\Set}[1]{\left\{\, {#1} \,\right\}}
\newcommand{\injrad}{\operatorname{inj\,rad}}
\renewcommand{\emptyset}{\varnothing}
\newcommand{\QR}{\mathsf{QR}}
\newcommand{\weakstarto}{\overset{*}{\rightharpoonup}}
\newcommand{\weakto}{\rightharpoonup}
\newcommand{\GHto}{\overset{GH}{\longrightarrow}}
\newcommand{\Sing}{\mathrm{Sing}}
\newcommand{\Strata}{\mathrm{Strata}}
\newcommand{\Cal}{\mathrm{Cal}}
\newcommand{\supp}{\mathrm{supp} \,}
\newcommand{\comass}{\mathrm{comass}}
\newcommand{\symp}{\mathrm{symp}}
\newcommand{\abs}[1]{\left\lvert #1 \right\rvert}
\DeclareMathOperator{\slim}{slim}
\DeclareMathOperator{\mlim}{mlim}
\renewcommand{\emptyset}{\varnothing}
\renewcommand{\le}{\leqslant}
\renewcommand{\ge}{\geqslant}
\renewcommand{\leq}{\leqslant}
\renewcommand{\geq}{\geqslant}
\renewcommand{\phi}{\varphi}
\renewcommand{\epsilon}{\varepsilon}
\title[]{Nodal resolution of quasiregular curves via bubble trees}
\author{Pekka Pankka}
\address{Department of Mathematics and Statistics, P.O. Box 68 (Pietari Kalmin katu 5), FI-00014 University of Helsinki, Finland}
\email{pekka.pankka@helsinki.fi}
\author{Jonathan Pim}
\address{Department of Mathematics and Statistics, P.O. Box 68 (Pietari Kalmin katu 5), FI-00014 University of Helsinki, Finland}
\email{jonathan.pim@helsinki.fi}
\thanks{This work was supported in part by the Academy of Finland project \#332671 and Center of Excellence FiRST}
\subjclass[2010]{Primary 30C65; Secondary 32A30, 53C15, 53C57}
\begin{document}

\begin{abstract}
We prove a version of Gromov's compactness theorem for quasiregular curves into calibrated manifolds with bounded geometry. In our main theorem, given an $n$-dimensional calibration $\omega$ on manifold $N$, we associate to a weak-$\star$ limit $\mu = \lim_{k \to \infty} \star F_k^*\omega$ of measures induced by a sequence $(F_k \colon X\to N)_{k\in \N}$ of $K$-quasiregular $\omega$-curves on a nodal manifold $X$, a bubble tree $\widehat X$ over \(X\), a sequence of mappings $(\widehat F_\ell \colon X \to N)_{\ell \in \N}$ converging locally uniformly to a quasiregular curve $\widehat F\colon \widehat X\to N$ which realizes the measure $\mu$, that is, $\mu = \pi_*(\star \widehat F^*\omega)$, where $\pi \colon \widehat X\to X$ is the natural projection. We call the sequence $(\widehat F_\ell)_{\ell \in \N}$ a nodal resolution of the sequence $(F_k)_{k\in \N}$.
As a corollary we obtain a normality criterion for families of quasiregular curves. Classic interpretations of bubbling via Gromov--Hausdorff convergence and pinching maps also follow.
\end{abstract}

\maketitle
 
{
    \hypersetup{linkcolor=black}
    \setcounter{tocdepth}{1}
    \tableofcontents
}

\section{Introduction}

It is a classical theorem in complex analysis that a locally uniform limit $f \colon \Sigma \to \Sigma'$ of a sequence of holomorphic maps $(f_k \colon \Sigma \to \Sigma')_{k\in \N}$ between closed Riemannian surfaces is holomorphic. It is similarly classical that the degree is preserved at the limit. More precisely, if $f$ is a non-constant, then $\deg f$ is the limit of the degrees $k\mapsto \deg f_k$ by Hurwitz's theorem. Gromov's compactness theorem \cite{Gromov-Invent-1985} gives a converse for this classical result: 
\begin{quote}
Let $(f_k \colon \Sigma \to \Sigma')$ be a sequence of holomorphic maps of uniformly bounded degree between Riemann surfaces. Then there exists a subsequence $(f_{k_j})$ of $(f_k)$ which converges in the nodal sense to holomorphic map $f \colon X\to \Sigma'$ from a nodal manifold $X$ over $\Sigma$. The limit of the degrees of maps $f_{k_j}$ is the sum of the degrees of the maps $f|_S \colon S\to \Sigma'$ for the strata $S$ of $X$.
\end{quote}
In \cite{Gromov-Invent-1985} Gromov showed more generally that, for pseudoholomorphic curves, area bounds yield sequential compactness if the local uniform convergence is understood in the more general sense of nodal convergence;
see e.g.~Pansu's \cite{Pansu-1994}, Wolfson \cite{Wolfson-1988}, Ruan--Tian \cite{Ruan-Tian-1995}, Hummel \cite{Hummel-book-1997}, and Bourgeois--Eliashberg--Hofer--Wysocki--Zehnder \cite{BEHWZ-2003} for proofs of Gromov's theorem for pseudoholomorphic curves.
The aforementioned nodal convergence results also hold in many other settings. For example, see Parker \cite{Parker-JDG-1996} for harmonic maps under energy bounds, Meier--Vikman--Wenger \cite{Meier-Vikman-Wenger-2025-bubbling-harmonic-2-spheres} for harmonic maps into metric spaces, Cheng--Karigiannis--Madnick \cite{Cheng-Karigiannis-Madnick} for associative Smith maps, Ivrii \cite{Ivrii} for meromorphic functions, and 
\cite{pankka-souto2023bubbleqr} for quasiregular mappings.

Recall that a continuous mapping $f\colon M \to N$ between oriented Riemannian $n$-manifolds is \emph{$K$-quasiregular for $K\ge 1$} if $f$ belongs to the Sobolev space $W^{1,n}_{\loc}(M,N)$ and satisfies the distortion inequality $\norm{Df}^n \le K J_f$ almost everywhere in $M$, where $\norm{Df}$ is the pointwise operator norm and $J_f = \star f^*\vol_N$ is the Jacobian determinant of the differential $Df$ of $f$. In this terminology, a $K$-quasiregular homeomorphism is called $K$-quasiconformal. See e.g.~Rickman's monograph \cite{Rickman-book-1993} for the theory of quasiregular mappings.

In this article, we prove Gromov's compactness theorem for quasiregular curves into calibrated manifolds. We recall the terminology after the following statement; for a statement without the compactness assumptions on the manifolds, see Theorem \ref{thm:main-reconstruction}.

\begin{theorem}
\label{thm:main-reconstruction-short}
Let $2\le n \le m$, let $X$ be a closed, connected, oriented Riemannian nodal $n$-manifold, let $(N,\omega)$ be a closed $n$-calibrated Riemannian $m$-manifold, and $K\ge 1$. Let $(F_k \colon X\to N)_{k\in \N}$ be a sequence of $K$-quasiregular $\omega$-curves for which $(\star F_k^*\omega)_{k\in \N}$ is a bounded sequence of measures in $X$. Then there exists a compact bubble tree \(\widehat X\) over \(X\), a sequence $(\widehat F_j \colon \widehat X\to N)_{j\in \N}$ of continuous $W^{1,n}_\loc(\widehat X,N)$ maps converging locally uniformly to a \(K\)-quasiregular \(\omega\)-curve \(\widehat F \colon \widehat X \to N\), and a subsequence \((F_{k_j})_{j\in \N}\) of \((F_k)_{k\in\N}\) for which 
\[
F_{k_j}|_{X\setminus P} \to \widehat F|_{X\setminus P} \text{ locally uniformly} \text{ and }
\star F_{k_j}^*\omega \weakstarto (\pi_{\widehat X,X})_*(\star \widehat F^*\omega) 
\]
as $j \to \infty$, where $P \subset X$ is a finite set. 
\end{theorem}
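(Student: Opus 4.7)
The plan is to combine the two standard ingredients of Gromov-type bubble-tree compactness results, namely a concentration/equicontinuity dichotomy and an iterative renormalization (``bubble off'') procedure, with the quasiregular analogues of the small-energy normality criterion and the removable singularity theorem that were established earlier in the paper. At each stage I will use that $K$-quasiregular $\omega$-curves satisfy a Caccioppoli-type inequality and that locally uniform limits with an energy bound are again $K$-quasiregular $\omega$-curves.

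First I would reduce to the analysis of the weak-$\star$ limit of the energy measures. Since $(\star F_k^*\omega)_{k\in \N}$ is a bounded sequence of positive Radon measures on a compact $X$, a subsequence satisfies $\star F_k^*\omega \weakstarto \mu$. A quantitative small-energy equicontinuity estimate for $K$-quasiregular $\omega$-curves gives a threshold $\epsilon_0 = \epsilon_0(n,K,N,\omega) > 0$ such that whenever an open set $U \subset X$ satisfies $\mu(\ol U) < \epsilon_0$ the restricted family $(F_k|_U)_{k \in \N}$ is normal. Consequently the concentration set
\[
P = \Set{ x \in X : \mu(\sset{x}) \geq \epsilon_0 }
\]
is finite, with $\card P \leq \mu(X)/\epsilon_0$, and, after passing to a diagonal subsequence, the maps $F_{k_j}$ converge locally uniformly on $X \setminus P$ to a continuous map $\widehat F_{(0)} \colon X \setminus P \to N$. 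Lower semicontinuity of the energy together with the closedness of the $K$-quasiregular $\omega$-curve class under locally uniform convergence (a Reshetnyak-type statement already in place in the paper) imply that $\widehat F_{(0)}$ is a $K$-quasiregular $\omega$-curve on $X\setminus P$.

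Next I would attach a bubble at each $p \in P$ by the usual renormalization. Fix a conformal chart around $p$ and choose centres $x_{j,p} \to p$ and scales $r_{j,p} \to 0$ capturing a definite fraction of the energy of $F_{k_j}$ in balls of radius $r_{j,p}$ (e.g.\ so that $\int_{B(x_{j,p}, r_{j,p})} \star F_{k_j}^*\omega$ equals a fixed value in $(0, \epsilon_0/2)$). The rescaled mappings $G_{k_j}^p = F_{k_j} \circ \psi_{j,p}$ on an exhausting sequence of balls in $\R^n$ are again $K$-quasiregular with uniformly bounded energy; applying the same dichotomy to them gives, up to a further subsequence, locally uniform convergence away from a new finite concentration set on $\R^n$ to a $K$-quasiregular $\omega$-curve which, by the removable singularity theorem for quasiregular curves and the conformal compactification $\R^n \cup \sset{\infty} \cong \bS^n$, extends to a $K$-quasiregular $\omega$-curve from the one-point compactification into $N$. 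This produces the first-generation bubbles, glued along their north poles to the points of $P$. Iterating the renormalization on each new concentration point yields further generations and, in the limit, the bubble tree $\widehat X$.

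The main technical obstacle is to show that this iteration terminates and that no energy is lost in the process. Termination follows from the fact that every non-constant $K$-quasiregular $\omega$-bubble from $\bS^n$ to $N$ carries at least $\epsilon_0$ units of energy (the same threshold as above applied to a whole bubble); since the total weak-$\star$ mass is finite and is strictly partitioned among disjoint rescaling neighbourhoods at each step, the recursion stops after finitely many levels. The crucial \emph{no energy loss} statement $\mu = (\pi_{\widehat X, X})_* (\star \widehat F^*\omega)$ requires ruling out energy escaping into the neck annuli between consecutive generations; this will be obtained from a quasiregular annulus decay lemma showing that the energy of a $K$-quasiregular $\omega$-curve on a long enough conformal cylinder with small boundary traces is itself small. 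The approximating sequence $(\widehat F_j)_{j \in \N}$ on $\widehat X$ is then produced by gluing: on the portion of $X$ outside shrinking balls around $P$ (and, recursively, around the concentration points on each bubble) one uses $F_{k_j}$; on each bubble sphere one uses the rescaled map $G_{k_j}^p$ transported by the inverse of the pinching map $\pi_{\widehat X,X}$; and one interpolates on the shrinking neck collars by a short radial homotopy. With the pinching scales chosen to tend to zero sufficiently fast, $\widehat F_j \to \widehat F$ locally uniformly on $\widehat X$ and $\star F_{k_j}^*\omega \weakstarto (\pi_{\widehat X,X})_*(\star \widehat F^*\omega)$, which is the statement.
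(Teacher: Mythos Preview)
Your outline follows the classical Sacks--Uhlenbeck/Parker bubble-tree scheme: weak-$\star$ limit of energy measures, finite concentration set, rescaling at each concentration point, iteration, termination via an energy gap, and a neck analysis for ``no energy loss.'' The paper takes a genuinely different route. Rather than rescaling the maps $F_{k_j}$ in shrinking balls and studying the resulting curves on exhausting domains, the paper performs an explicit \emph{nodal surgery} (Lemma~\ref{lemma:surgery-lemma-new}): it attaches a sphere $\bS^n_p$ at each concentration point and extends $F_{k_j}$ to a map on $X^{\vee P}$ by conformally transplanting $F_{k_j}|_{B_M(p,r)}$ onto the sphere and filling the vacated ball with a controlled cutoff. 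The surgered maps are no longer quasiregular, which forces the paper to introduce the intermediate notion of \emph{asymptotically $(K,\omega)$-quasiregular sequences} (Definition~\ref{def:asymptotically-quasiregular}) and to redo the convergence theory in that class. To guarantee progress at each iteration, the paper does not choose rescaling radii as you do but instead postcomposes with conformal automorphisms of each bubble, using a center-of-mass argument (Proposition~\ref{prop:global-renormalization}) to force every atom on a new bubble to carry at most $9/10$ of the bubble's total mass; geometric decay then pushes atoms below the energy gap $\varepsilon_0$.

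The substantive concern in your proposal is the ``quasiregular annulus decay lemma'' you invoke for no energy loss. That statement---small boundary traces on a long conformal cylinder imply small energy---is not established anywhere in the paper and is exactly the step the surgery approach is designed to bypass. For pseudoholomorphic curves or $n$-harmonic maps it follows from monotonicity or three-circles-type estimates, but $K$-quasiregular $\omega$-curves are not $n$-harmonic (the paper notes this explicitly when explaining why the method of Cheng--Karigiannis--Madnick is unavailable), and no such neck lemma is provided by the cited regularity results of Ikonen. In the paper's construction the neck energy is controlled \emph{by fiat}: Lemma~\ref{lemma:surgery-lemma-new}(3) bounds the energy of the filled-in map by the energy on the thin annulus $A_M(p;r',R)$ plus a conformal capacity term $(\log(r/r'))^{(1-n)/n}$, both of which are driven to zero by the choice of radii. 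So while your overall architecture is standard and plausible, the neck step is a genuine gap relative to what the paper makes available; either you would need to supply an independent proof of the annulus decay lemma for quasiregular $\omega$-curves, or switch to the paper's surgery framework, which trades that analytic lemma for the bookkeeping of asymptotically quasiregular sequences.
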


The interpretation of the limit map $\widehat F$ is twofold. On one hand, $\widehat F$ may be seen as a weak replacement of the classical locally uniform limit, made possible by enlarging the domain of the map. On the other hand, $\widehat F$ may be seen as a resolution of the singular parts of the limiting measure of the sequence $(\star F^*_{k_j}\omega)_{j\in \N}$.

We discuss now the terminology in Theorem \ref{thm:main-reconstruction-short} starting from quasiregular curves and calibrated manifolds, and then continuing to nodal manifolds and bubble trees.

For the definition of quasiregular curves, we recall first terminology related to calibrations.
Let $M$ be an oriented, Riemannian $n$-manifold and let $N$ be a Riemannian $m$-manifold for $2\le n \le m$. Given a closed non-vanishing smooth differential $n$-form $\omega \in \Omega^n(N)$ in $N$, we say that a continuous mapping $F\colon M \to N$ is a \emph{$K$-quasiregular $\omega$-curve} if $F$ belongs to the Sobolev space $W^{1,n}_\loc(M,N)$ and satisfies the distortion inequality
\begin{equation}
\label{eq:QRC}
(\norm{\omega}_\comass \circ F) \norm{DF}^n \le K (\star F^*\omega)
\quad \text{a.e.~in }M,
\end{equation}
where $\norm{\omega}_\comass$ is the pointwise comass norm function $p \mapsto \norm{\omega_p}_\comass$ of the form $\omega$ given by formula
\[
\norm{\omega_p}_\comass = \sup\{ \omega_p(v_1,\ldots, v_n) \colon |v_1|=\cdots = |v_n|=1\}
\]
for $p\in N$ and $\star \colon \Omega^n(M) \to \Omega^0(M)$ the Hodge star operator on $M$. 

The class of quasiregular curves contains as subclasses both quasiregular mappings and holomorphic curves. Indeed, for equidimensional oriented manifolds $M$ and $N$, $K$-quasiregular mappings $M\to N$ are $K$-quasiregular $\vol_N$-curves where \(\vol_N\) the Riemannian volume form of $N$. Similarly, holomorphic curves $\C \to \C^k$ are $1$-quasiregular $\omega_\symp$-curves, where $\omega_\symp$ is the standard symplectic form in $\C^k$. Relatedly, pseudoholomorphic curves for isometric almost complex structures are also $1$-quasiregular with respect to the associated symplectic forms; we refer to \cite{pankka2020qrcs} and \cite{Ikonen-Pankka-2024} for a more detailed discussion.
Furthermore, Smith maps, see e.g.~Cheng, Karigiannis, and Madnick \cite{Cheng-Karigiannis-Madnick, Cheng-Karigiannis-Madnick--variational} and Broder, Iliashenko, and Madnick \cite{Broder-Iliashenko-Madnick}, form another subclass of $1$-quasiregular curves in this terminology. 
Regarding convergence properties, we simply note that \emph{locally uniform limits of $K$-quasiregular $\omega$-curves are $K$-quasiregular $\omega$-curves}, see \cite{pankka2020qrcs}. 

In what follows, we consider only non-vanishing calibrations. A smooth form $\omega\in \Omega^n(N)$ is a \emph{non-vanishing $n$-calibration on $N$} if 
it is a closed differential $n$-form satisfying $0 <  \norm{\omega_p}_\comass \le \sup_{p\in N} \norm{\omega_p}_\comass = 1$.
We call the pair $(N,\omega)$ an \emph{$n$-calibrated $m$-manifold} and denote $\Cal^n_+(N)$ the space of strongly non-vanishing $n$-calibrations on $N$, that is, the space of non-vanishing \(n\)-calibrations \(\omega\) on $N$ having comass bounded away from zero i.e. 
$0 < \inf_{p\in N} \norm{\omega_p}_\comass \le \sup_{p\in N} \norm{\omega_p}_\comass = 1$. We refer to Harvey and Lawson \cite{harvey1982calibrated}, Joyce \cite{Joyce-book-2007}, or Morgan \cite[Chapter 6]{Morgan-book-2016} for detailed discussions about calibrations, and Heikkilä \cite{Heikkila-PLMS} and Ikonen \cite{ikonen2024remove} for discussions on non-vanishing conditions for calibrations in the context of quasiregular curves.

\newcommand{\assoc}{\mathrm{assoc}}

To our knowledge, the theorem of Cheng, Karigiannis, and Madnick \cite[Proposition 4.12 and Theorems 5.1 and 5.2]{Cheng-Karigiannis-Madnick} for associative Smith maps is the first Gromov's compactness theorem for higher dimensional calibrations. 

In the terminology above an associative Smith map is a $C^1$-smooth $1$-quasiregular $\omega_\assoc$-curve $\Sigma \to M$, where $\Sigma$ is an oriented Riemannian $3$-manifold and $M$ is a $G_2$-manifold calibrated by a closed associative form $\omega_\assoc\in \Omega^3(M)$. 
We refer to \cite{Cheng-Karigiannis-Madnick} for a discussion and the precise statement of this result and merely note that energy bounds for Smith maps cause bubbling, i.e.~nodal convergence, analogous to the case of pseudoholomorphic curves.

We say that a connected and second countable metrizable space $X$ is a \emph{nodal $n$-manifold} if there exists a discrete set $P \subset X$ having the property that 
closures of components of $X\setminus P$, i.e.~the strata of $X$, are $n$-manifolds, and each point of $P$ belongs to only finitely many closures of components of $X\setminus P$. We say that $p\in P$ is a \emph{singular point} if $p$ is contained in at least two strata and denote $\Sing(X)$ the set of all singular points. We denote $\Strata(X)$ the collection of all strata.
We also say that $X$ is a \emph{closed}, \emph{oriented}, \emph{smooth}, or \emph{Riemannian} nodal $n$-manifold if each stratum of $X$ is closed, oriented, smooth, or Riemannian, respectively. 

A subspace $X$ of a nodal $n$-manifold $\widehat X$ is a \emph{nodal submanifold of $\widehat X$} if $X$ is a nodal $n$-manifold for which $\Strata(X) \subset \Strata(\widehat X)$. Due to the connectedness of $X$ and $\widehat X$, $X$ is a retract of $\widehat X$, more precisely, there exists a unique extension $\pi_{\widehat X,X} \colon \widehat X\to X$ of the identity map $\id_X \colon X\to X$ having the property that $\pi_{\widehat X,X}(\widehat X\setminus X) \subset \Sing(X)$. We denote $\Sing_X(\widehat X)$ the singular points of $\widehat X$ which are not singular points of $X$ and $\Strata_X(\widehat X)$ the strata of $\widehat X$ not contained in $X$.

Following the usual terminology, we say that a nodal manifold $\widehat X$ is a \emph{bubble tree (over a nodal submanifold $X$)} if the elements of $\Strata_X(\widehat X)$ are topological $n$-spheres and the components of $\widehat X\setminus X$ are simply connected.
In what follows, we only consider oriented and Riemannian bubble trees over oriented and Riemannian nodal manifolds, and we omit these adjectives if there is no confusion.

Merging the terminology in \cite{pankka2020qrcs} and \cite{pankka-souto2023bubbleqr}, we say that a continuous mapping $F \colon X\to N$ from an oriented and Riemannian nodal $n$-manifold to a Riemannian $m$-manifold $N$ is a \emph{$K$-quasiregular $\omega$-curve for $K\ge 1$ and $\omega \in \Cal^n_+(N)$} if the restriction $F|_M \colon M\to N$ is a $K$-quasiregular $\omega$-curve for each stratum $M$ of $X$. We also interpret the pull-back $F^*\omega$ stratum-wise, that is, $F^*\omega|_M = (F|_M)^*\omega$ for each stratum $M$ of $X$; recall that $F$ is a Sobolev mapping in each stratum and that $F^*\omega$ is not pointwise defined even within a stratum. We have now introduced all the terminology in Theorem \ref{thm:main-reconstruction-short}.

%\begin{remark*}
%In the statement of Theorem \ref{thm:main-reconstruction-short}, the weak-$\star$ limit $\mu$ of measures $\star F_{k_j}^*\omega$ takes the place of degree -- or of energy as in \cite{Cheng-Karigiannis-Madnick}-- of the sequence, as the preserved quantity at the limit in the sense that the (absolutely continuous) measure $\star \widehat F^*\omega$ descends from the bubble tree $\widehat X$ to $X$ as the measure $\mu$, i.e.~$\mu = (\pi_{\widehat X,X})_*(\star \widehat F^*\omega)$. This property may be viewed as a counterpart of the fact that, for a sequence $(F_j \colon X \to N)_{j\in \N}$ of $K$-quasiregular $\omega$-curves converging locally uniformly to a $K$-quasiregular $\omega$-curve $F\colon X \to N$, the measures $\star F_k^*\omega$ converge in the weak-$\star$ sense to $\star F^*\omega$.
%\end{remark*}

Theorem \ref{thm:main-reconstruction-short} follows immediately from the following compactness theorem for quasiregular curves from non-compact spaces into spaces of bounded geometry.
Also in this statement, we interpret the convergence of a subsequence of the sequence $(F_k)_{k\in \N}$ in terms of the weak-$\star$ compactness of the measures $\star F^*\omega_k$. 
We give in Section \ref{sec:GH} two other interpretations, one in terms of the Gromov--Hausdorff convergence of Riemannian metrics of $M$ and the other in terms of the nodal convergence of pinching maps. 
The remaining terminology is introduced after the statement.

\begin{restatable}
{theorem}{MainTheReconstruction}
\label{thm:main-reconstruction}
Let $2\le n \le m$, let $X$ be a connected, oriented Riemannian nodal $n$-manifold, let $(N,\omega)$ be an $n$-calibrated Riemannian $m$-manifold having bounded geometry, and $K\ge 1$. Let $(F_k \colon X\to N)_{k\in \N}$ be a locally equibounded sequence of $K$-quasiregular $\omega$-curves for which there exists \(x_0 \in X\) such that the orbit \(\set{F_k(x_0) \mid k \in \N}\) has compact closure.
Then there exists a bubble tree \(\widehat X\) over \(X\), a sequence \((\widehat F_\ell \colon \widehat X \to N)_{\ell \in \N}\) of maps converging locally uniformly to a \(K\)-quasiregular \(\omega\)-curve \(\widehat F \colon \widehat X \to N\), and a subsequence \((F_{k_j})_{j\in \N}\) of \((F_k)_{k\in\N}\) satisfying the following conditions:
\begin{enumerate}
\item the maps \((F_{k_j}|_{X \setminus \Sing_X(\widehat X)})_j\) converge locally uniformly to \(\widehat F |_{X \setminus \Sing_X(\widehat X)}\); and
\item $\star F_{k_j}^*\omega \weakstarto (\pi_{\widehat X,X})_*(\star \widehat F^*\omega)$ as measures as $j \to \infty$.
\end{enumerate}
Moreover, for each $p\in \Sing_X(\widehat X)\cap X$, the pre-image $\pi_{\widehat X,X}^{-1}(p)$ consists of finitely many bubbles.
\end{restatable}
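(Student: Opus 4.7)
The proof follows the standard bubble-tree paradigm adapted to quasiregular $\omega$-curves. The plan has four stages: extract a weak-$\star$ limit of the measures $\star F_k^*\omega$ and identify a discrete concentration set $P \subset X$; extract a locally uniformly convergent subsequence of $(F_k)$ on $X \setminus P$ via a normality criterion; at each concentration point perform a conformal blow-up to produce a bubble carried by an $n$-sphere; and iterate on each bubble to build the bubble tree $\widehat X$. The key analytic inputs are a normality threshold $\varepsilon_0 = \varepsilon_0(K,n,N,\omega) > 0$ such that any $K$-quasiregular $\omega$-curve on a ball with mass below $\varepsilon_0$ is equicontinuous on the concentric ball of half radius, and a removable singularity theorem for quasiregular $\omega$-curves at isolated points (and at $\infty$ in the Euclidean setting).

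For the first two stages, after passing to a subsequence, the locally bounded sequence of Radon measures $(\star F_k^*\omega)_k$ converges weak-$\star$ on $X$ to a locally finite Radon measure $\mu$. I set $P = \{p \in X : \mu(\{p\}) \geq \varepsilon_0\}$, a locally finite set. On $X \setminus P$ the sequence is locally uniformly equicontinuous by the $\varepsilon_0$ normality criterion; together with the compact-orbit hypothesis at $x_0$ and the connectedness of $X$, an Arzelà-Ascoli and diagonal extraction produces a locally uniform limit $\widehat F_0$ on $X \setminus P$. The limit is a $K$-quasiregular $\omega$-curve by the closure of such curves under locally uniform convergence recalled in the introduction, and the removable singularity theorem extends $\widehat F_0$ continuously across $P$.

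At each $p \in P$, I perform the blow-up stratum by stratum: for each stratum $M$ of $X$ passing through $p$, I select scales $r_{k,M,p} \to 0$ via a mass-capture rule so that the rescaled maps $G_{k,M,p}$ defined on larger and larger Euclidean balls concentrate exactly a fixed fraction of $\varepsilon_0$ of $\omega$-mass on the unit ball. Passing to a further diagonal subsequence, the normality criterion applied to $G_{k,M,p}$ on $\R^n$ produces a locally uniform limit outside a secondary discrete concentration set, and removable singularities at those points and at $\infty$ yield a nonconstant $K$-quasiregular $\omega$-curve $S^n \to N$, attached to the parent at $p$ via the point at infinity. Secondary concentration points inside the new bubble are handled by iterating the blow-up on the limiting map. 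Since each bubble is an $n$-sphere attached at a single node to either $X$ or to a previously constructed bubble, the components of $\widehat X \setminus X$ are simply connected as required.

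The main obstacle is the termination of this iterative blow-up, which requires a mass quantization (gap) theorem: every nonconstant $K$-quasiregular $\omega$-curve $S^n \to N$ satisfies $\int_{S^n} \star F^*\omega \geq \eta_0$ for some $\eta_0 = \eta_0(K,n,N,\omega) > 0$. Granted such a gap, local finiteness of $\mu$ forces only finitely many bubbles above any relatively compact subset of $X$, so the bubble tree $\widehat X$ is well-defined and the fiber $\pi_{\widehat X,X}^{-1}(p)$ is finite for every $p \in \Sing_X(\widehat X) \cap X$. The identity $\star F_{k_j}^*\omega \weakstarto (\pi_{\widehat X,X})_*(\star \widehat F^*\omega)$ then follows from a complementary \emph{no neck loss} statement asserting that the correctly chosen rescaling scales leave no positive $\mu$-mass unaccounted for in the necks between a bubble and its parent. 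The gap theorem, together with the neck analysis, are the principal analytic inputs distinguishing the calibrated setting from the equidimensional quasiregular case treated in \cite{pankka-souto2023bubbleqr}.
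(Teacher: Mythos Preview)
Your high-level strategy is the classical Sacks--Uhlenbeck/Parker paradigm: identify concentration points, rescale, iterate, and terminate via an energy gap. The paper's route is genuinely different. Rather than rescaling the original maps $F_k$ and analyzing necks, the authors introduce an explicit \emph{nodal surgery} construction (Lemma~\ref{lemma:surgery-lemma-new} and Proposition~\ref{prop:first-surgery-nodal-manifold}): near each concentration point $p$ they cut out a small ball, graft in a sphere $\bS^n_p$, and transplant the map onto the sphere via conformal charts and an explicit Sobolev filling. The surgered maps are no longer quasiregular curves, so the paper works throughout with the weaker notion of \emph{asymptotically $(K,\omega)$-quasiregular sequences} (Definition~\ref{def:asymptotically-quasiregular}). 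A second key step is a \emph{renormalization} (Proposition~\ref{prop:global-renormalization}) that precomposes with conformal automorphisms of each bubble to force every atom of the next limiting measure to carry at most $9/10$ of the parent mass; after $O(\log(\mu(\{p\})/\varepsilon_0))$ iterations all atoms drop below the gap $\varepsilon_0$, the limiting curve is constant on further bubbles by Theorem~\ref{thm:analytic-energy-gap}, and those bubbles are discarded.

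The practical payoff of the paper's method is precisely the point you flag but do not prove: by construction the nodal surgery transfers \emph{all} mass from a shrinking neighborhood of $p$ onto the bubble, with the energy of the filling tending to zero (item~\eqref{item:sl-3} of Lemma~\ref{lemma:surgery-lemma-new}), so there are no necks and no neck-loss statement is required. Your proposal leaves the neck analysis as an unproven ``principal analytic input''; for quasiregular $\omega$-curves this is not obviously available, since the usual annular energy decay arguments rely on harmonicity or a PDE structure that $K$-quasiregular curves for $K>1$ lack (the paper notes explicitly that the method of \cite{Cheng-Karigiannis-Madnick} for Smith maps, which are $n$-harmonic, is not at their disposal). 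Without either supplying that neck analysis or switching to the surgery approach, your argument does not close: you cannot conclude the measure identity $\star F_{k_j}^*\omega \weakstarto (\pi_{\widehat X,X})_*(\star \widehat F^*\omega)$, and hence also not the finiteness of bubbles over each $p$. The energy gap you assume is, by contrast, proved in the paper (Theorem~\ref{thm:analytic-energy-gap}) from H\"older continuity and the de~Rham-type argument of Proposition~\ref{prop:trival-qrc-constant}.
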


We say that a Riemannian manifold $N$ \emph{has bounded geometry} if the sectional curvature $\kappa$ of $N$ is bounded from above and below, that is, $|\kappa|\le \kappa_N <\infty$ and $N$ has injectivity radius bounded away from zero. In particular, closed Riemannian manifolds and open manifolds with cylindrical ends
have bounded geometry.

Finally, we say that a family $\cF$ of $W^{1,n}_\loc$-Sobolev maps $F \colon M \to N$ is \emph{locally equibounded} if there exists an open cover $\cU$ having the property that, for each $U\in \cU$, there exists a constant $C_U>0$ for which
\begin{equation}
\label{eq:loc-equibound}
\sup_{F\in \cF} \int_{U} \norm{DF}^n \le C_U.
\end{equation}
Note that, for a $K$-quasiregular $\omega$-curve $F\colon M \to N$, $\star F^*\omega \le \norm{DF}^n \le K (\star F^*\omega)$ almost everywhere and we may equivalently state \eqref{eq:loc-equibound} in terms of $\star F^*\omega$.

\subsection{Existence of bubbles; normal families}

One interpretation of Theorem \ref{thm:main-reconstruction} is that, by passing from the original domain $X$ of the sequence $(F_k \colon X\to N)_{k\in \N}$ to the bubble tree $\widehat X$, we find a sequence converging locally uniformly to a quasiregular $\omega$-curve $\widehat F \colon \widehat X\to N$. For a non-trivial bubble tree $\widehat X$, that is, a bubble tree having a bubble $S \in \Strata_X(\widehat X)$, restrictions of $\widehat F$ to bubbles yield non-constant quasiregular $\omega$-curves $\bS^n \to N$. 

For the forthcoming statements, we say that a calibration $\omega \in \Cal^n_+(N)$ is \emph{spherical} if there exists a smooth map $f\colon \bS^n \to N$ for which $\int_{\bS^n} f^*\omega \ne 0$.
In particular, for a non-spherical calibration $\omega \in \Cal^n_+(N)$, we have that $\widehat X=X$ and $F_{k_j} \to \widehat F$ in Theorem \ref{thm:main-reconstruction}. Thus, as a corollary, we have the following normality criterion for the family $\QR_{K,\omega}(X,N)$ of $K$-quasiregular $\omega$-curves $X\to N$.

\begin{corollary}
\label{cor:no-bubbling}
Let $2\le n \le m$ and let $(N,\omega)$ be an $n$-calibrated closed Riemannian $m$-manifold, where $\omega\in \Cal^n_+(N)$ is non-spherical. Then, for a connected and oriented Riemannian nodal $n$-manifold $X$ and $K\ge 1$, each locally equibounded family $\cF \subset \QR_{K,\omega}(X,N)$ is normal. Moreover, each $K$-quasiregular $\omega$-curve $X\to N$ is constant on the spherical strata of $X$.
\end{corollary}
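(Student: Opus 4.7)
The plan is to establish the ``moreover'' part of the corollary first, since ruling out non-constant $K$-quasiregular $\omega$-curves from $\bS^n$ will kill nontrivial bubbling and then reduce the normality statement to a direct application of Theorem \ref{thm:main-reconstruction}. For the constancy claim, let $G\colon \bS^n\to N$ be a $K$-quasiregular $\omega$-curve, thinking of $G = F|_S$ for a spherical stratum $S$ of $X$. Since $N$ is closed and the Sobolev exponent matches the domain dimension, smooth maps are dense in $W^{1,n}(\bS^n,N)$, and a H\"older-type estimate applied to the algebraic formula for the pullback makes $h\mapsto \int_{\bS^n} h^*\omega$ continuous along $W^{1,n}$-approximations. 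Non-sphericity of $\omega$ yields $\int_{\bS^n} g^*\omega = 0$ for every smooth $g$, so by approximation $\int_{\bS^n} G^*\omega = 0$. Integrating the quasiregular-curve inequality $(\norm{\omega}_\comass\circ G)\norm{DG}^n \le K(\star G^*\omega)$ over $\bS^n$ and using the lower bound $c := \inf_N \norm{\omega}_\comass > 0$ provided by $\omega\in \Cal^n_+(N)$, we obtain $c\int_{\bS^n}\norm{DG}^n \le 0$, so $DG \equiv 0$ a.e.\ and $G$ is constant.

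For normality, let $(F_k)\subset \cF$ be a locally equibounded sequence. Closedness of $N$ supplies bounded geometry and makes every orbit $\set{F_k(x_0)}$ precompact, so Theorem \ref{thm:main-reconstruction} produces a bubble tree $\widehat X$ over $X$, a subsequence $(F_{k_j})$, and a $K$-quasiregular $\omega$-curve limit $\widehat F\colon \widehat X\to N$ with locally uniform convergence on $X\setminus \Sing_X(\widehat X)$ and weak-$\star$ measure convergence $\star F_{k_j}^*\omega \weakstarto (\pi_{\widehat X,X})_*(\star \widehat F^*\omega)$. By the definition of a bubble tree, each new stratum $S\in \Strata_X(\widehat X)$ is a topological $n$-sphere, so $\widehat F|_S$ is a $K$-quasiregular $\omega$-curve $\bS^n\to N$ and hence constant by the key lemma. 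Consequently each bubble carries zero $\omega$-mass and $(\pi_{\widehat X,X})_*(\star \widehat F^*\omega) = \star(\widehat F|_X)^*\omega$ on $X$, an absolutely continuous measure with no atoms at points of $\Sing_X(\widehat X)\cap X$.

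What remains, and what I expect to be the main technical point, is upgrading the locally uniform convergence from $X\setminus \Sing_X(\widehat X)$ to all of $X$. The absence of concentrated $\omega$-mass at points of $\Sing_X(\widehat X)\cap X$, combined with the standard Rickman/Morrey-type equicontinuity estimate for $K$-quasiregular $\omega$-curves with locally bounded $\omega$-energy into a target of bounded geometry, renders the sequence $(F_{k_j})$ locally equicontinuous on $X$. A diagonal Arzela--Ascoli extraction then yields a further subsequence converging locally uniformly on all of $X$, and uniqueness on the dense set $X\setminus \Sing_X(\widehat X)$ identifies the limit with $\widehat F|_X$. A cleaner alternative, should the bubble-tree construction inside the proof of Theorem \ref{thm:main-reconstruction} be minimal in the sense that only bubbles carrying strictly positive $\omega$-energy are attached, is that the vanishing of every potential bubble forces $\widehat X = X$ and the conclusion is immediate from the theorem.
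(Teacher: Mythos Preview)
Your proof is correct and aligns with the paper's (terse) argument. The paper states only that for non-spherical $\omega$ one has $\widehat X = X$ in Theorem~\ref{thm:main-reconstruction}, relying on the observation (made just before the corollary) that restrictions of $\widehat F$ to bubbles would yield non-constant quasiregular $\omega$-curves $\bS^n \to N$; your ``moreover'' argument supplies exactly the missing verification that such curves cannot exist, and your alternative (b) is precisely the paper's route.

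Your primary route (a)---passing through the absence of atoms in the limit measure and then invoking the small-energy H\"older estimate (Corollary~\ref{cor:local-holder-cont}) to recover equicontinuity across the would-be singular points---is a legitimate and slightly more robust alternative: it does not depend on knowing that the bubble tree produced by the proof of Theorem~\ref{prop:bubble-induction} is trimmed to contain only bubbles on which $\widehat F$ is non-constant. The paper's one-line deduction implicitly assumes this minimality, which is true but requires noting that bubbles on which $\widehat F$ is constant can be excised without affecting either conclusion of the theorem. Your approach (a) sidesteps that point entirely.
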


\newcommand{\CP}{\mathbb{C}P}
\newcommand{\FS}{\mathrm{FS}}

As a particular application of Corollary \ref{cor:no-bubbling}, we consider quasiregular curves into the complex projective spaces $\CP^n$. Let $\omega = \frac{1}{k!}\omega_{FS}^{\wedge k}$ be a $(2k)$-calibration on $\CP^n$ for $k\in \{2,\ldots,n\},$ where $\omega_{\FS}$ is the Fubini--Study form of $\CP^n$. Since $H_{\dR}^2(\bS^{2k})=0$, the form $\omega$ is non-spherical and hence, all quasiregular $\omega$-curves $\bS^{2k}\to N$ are constant maps. However, there are -- for large enough $K\ge 1$ -- non-constant $K$-quasiregular $\omega$-curves $\R^{2k}\to \CP^n$. Indeed, by a construction of Luisto and Prywes \cite{Luisto-Prywes}, there are non-constant quasiregular mappings $\R^{2k} \to \CP^k$ and post-composing such maps with the inclusion $\CP^k \hookrightarrow \CP^n$ yields non-constant quasiregular $\omega$-curves $\R^{2k}\to \CP^n$. In particular, the family $\QR_{K,\omega}(\R^{2k}, \CP^n)$ is non-empty for $K\ge 1$ large enough. Since the calibration $\omega$ is non-spherical, we obtain from Corollary \ref{cor:no-bubbling} that the family $\QR_{K,\omega}(\R^{2k},\CP^n)$ is nevertheless small in the sense that, for each $K\ge 1$, a locally equibounded family $\cF$ in $\QR_{K,\omega}(\R^{2k}, \CP^n)$ is normal. We refer to Heikkil\"a \cite{heikkila2023rham} for a more detailed discussion on the existence of quasiregular curves.

\begin{remark}
By Corollary \ref{cor:no-bubbling}, having a non-spherical calibration is a sufficient condition for the normality of locally equibounded subfamilies of $\QR_{K,\omega}(X,N)$. We do not know if this is also a necessary condition. 
More precisely, given a spherical closed form $\omega\in \Cal^n_+(N)$, we do not know whether there exists 
a non-constant quasiregular $\omega$-curve $\bS^n \to N$.
\end{remark}

\subsection{Quasiconformality as an open condition}

The distortion condition \eqref{eq:QRC} is an open condition in the following sense:

\begin{quote}
For $(N,g_0)$ a closed Riemannian $m$-manifold, $\omega_0\in \Omega^n_+(N)$, and $1\le K < K'$, there exists a neighborhood $G$ of $g_0$ in the uniform topology of the Riemannian metrics of $N$ and neighborhood $U$ of $\omega_0$ in the uniform topology of $\Omega^n_+(N)$ having the following properties: each $K$-quasiregular $\omega$-curve $(M,g_M) \to (N,g)$, for $(g_M,\omega)\in G\times U$, is a $K'$-quasiregular $\omega_0$-curve $(M,g_M) \to (N,g_0)$.     
\end{quote}

This simple observation yields Gromov's compactness theorem for varying Riemannian and calibration structures on the target space. We formulate this corollary of Theorem \ref{thm:main-reconstruction} as follows.
As the result follows immediately from Theorem \ref{thm:main-reconstruction} and the aforementioned observation, we omit the details.

\begin{corollary}
Let $2\le n \le m$, let $X$ be an oriented Riemannian nodal $n$-manifold, let $N$ be a closed smooth $m$-manifold, and let $(g_k)_{k\in \N}$ be sequence of Riemannian metrics on $N$ converging uniformly to a Riemannian metric $g$ on $N$, and let $(\omega_k)_{k\in \N}$ be a sequence in $\Cal^n_+(N)$ converging uniformly to a differential $n$-form $\omega\in \Omega^n_+(N)$. Let also $K\ge 1$ and let $(F_k \colon X\to N)_{k\in \N}$ be a
locally equibounded sequence, where $F_k$ is a $K$-quasiregular $\omega_k$-curve from $M$ to $(N,g_k)$. Then there exists a bubble tree \(\widehat X\) over \(X\), a sequence \((\widehat F_\ell \colon \widehat X \to N)_{\ell \in \N}\) of maps converging locally uniformly to a \(K\)-quasiregular \(\omega\)-curve \(\widehat F \colon \widehat X \to N\), and a subsequence $(F_{k_j})_{j\in \N}$ of $(F_k)_{k\in\N}$ satisfying the following conditions: 
\begin{enumerate}
\item $F_{k_j}|_{X\setminus \Sing_X(\widehat X)} \to F|_{X\setminus \Sing_X(\widehat X)}$ locally uniformly, and
\item $\star F_{k_j}^*\omega \weakstarto (\pi_{\widehat X,X})_*(\star F^*\omega)$ as measures as $j \to \infty$.

\end{enumerate}
\end{corollary}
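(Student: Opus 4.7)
The plan is to reduce the varying-target situation to the fixed-target setting of Theorem~\ref{thm:main-reconstruction} via the open-condition observation stated just before the corollary. Given any $\eta > 0$, applying that observation with $K' = K + \eta$ yields neighborhoods of $g$ and $\omega$ in the uniform topologies of Riemannian metrics and of $\Omega^n_+(N)$ such that every $K$-quasiregular $\omega_k$-curve into $(N,g_k)$, for $(g_k,\omega_k)$ lying in these neighborhoods, is automatically a $K'$-quasiregular $\omega$-curve into $(N,g)$. Since $g_k \to g$ and $\omega_k \to \omega$ uniformly, all but finitely many of the $F_k$ satisfy this enhanced condition, and the initial terms may be discarded.

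Next I would check the hypotheses of Theorem~\ref{thm:main-reconstruction} for the tail sequence regarded now as $K'$-quasiregular $\omega$-curves into the fixed target $(N,g)$. Uniform convergence $g_k \to g$ on closed $N$ makes the operator norms $\norm{DF_k}_{g_k}$ and $\norm{DF_k}_g$ bi-Lipschitz comparable with constants independent of $k$, and the Riemannian volume forms are similarly comparable, so the local equiboundedness hypothesis transfers to the fixed target. The target $(N,g)$ has bounded geometry because $N$ is closed. Theorem~\ref{thm:main-reconstruction} then produces a bubble tree $\widehat X$ over $X$, a subsequence $(F_{k_j})_{j\in\N}$, interpolating maps $(\widehat F_\ell)_{\ell\in\N}$, and a $K'$-quasiregular $\omega$-curve $\widehat F \colon \widehat X \to N$ satisfying both the locally uniform convergence statement on $X \setminus \Sing_X(\widehat X)$ and the weak-$\star$ convergence of the measures.

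To sharpen the distortion bound from $K'$ to $K$, I would iterate the above argument along $\eta_m \searrow 0$: for each $K_m = K + \eta_m$, all but finitely many terms of $(F_{k_j})$ are $K_m$-quasiregular $\omega$-curves into $(N,g)$, and the same is then true of the interpolating maps $\widehat F_\ell$ built from them by the construction of Theorem~\ref{thm:main-reconstruction} (since bubbling is done via conformal rescalings which preserve the distortion constant). Because local uniform limits of $K_m$-quasiregular $\omega$-curves are $K_m$-quasiregular, the map $\widehat F$ is $K_m$-quasiregular for every $m$, hence $K$-quasiregular.

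The delicate point I expect is the last one: ensuring that the interpolating maps $\widehat F_\ell$ produced inside the proof of Theorem~\ref{thm:main-reconstruction} really do inherit the same $K_m$-distortion constant as the subsequence they are built from, uniformly in $m$. If that bookkeeping becomes awkward, the cleaner alternative is to run the compactness argument only once at some fixed $K' > K$ and then invoke lower semicontinuity of the distortion inequality under the locally uniform and $L^n$-weak convergences used in the proof, so that the sharper $K$-distortion bound passes to $\widehat F$ directly from the fact that each $F_{k_j}$ is $K(1 + o(1))$-quasiregular with respect to the fixed pair $(g,\omega)$.
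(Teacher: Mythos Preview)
Your proposal is correct and follows exactly the route the paper indicates: the paper itself omits all details, saying only that the corollary ``follows immediately from Theorem~\ref{thm:main-reconstruction} and the aforementioned observation.'' Your reduction via the open-condition observation, the transfer of local equiboundedness under uniformly comparable metrics, and the application of Theorem~\ref{thm:main-reconstruction} to the fixed target $(N,g)$ are precisely what is intended.

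One small correction on the bookkeeping for the sharp constant $K$: the interpolating maps $\widehat F_\ell$ in the construction are \emph{not} globally quasiregular curves---the nodal surgery introduces non-quasiregular filling regions near the attaching points---so your remark that ``bubbling is done via conformal rescalings which preserve the distortion constant'' is not quite the right picture. What is true is that the sequence $(\widehat F_\ell)$ is \emph{asymptotically} $(K,\omega)$-quasiregular in the sense of Definition~\ref{def:asymptotically-quasiregular}, and the framework is already designed so that the output limit is $K$-quasiregular. Concretely, since the open condition gives that $F_k$ is $(1+\varepsilon_k)K$-quasiregular with respect to $(g,\omega)$ for some $\varepsilon_k\to 0$, the original sequence already admits a $(K,\omega)$-quasiregular exhaustion about $Q=\emptyset$, and the machinery behind Theorem~\ref{thm:main-reconstruction} (via Lemma~\ref{lem:loc-unif-limit-energy-bound}) then delivers a genuinely $K$-quasiregular limit without any additional iteration. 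Your alternative lower-semicontinuity route at the end also works and is equivalent to this.
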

%\begin{proof}
%Since $g_k \to g$ and $\omega_k \to \omega$ uniformly, we may pass to a subsequence, denoted also $(F_k)$, for which each $F_k$ is a $(1+1/k)K$-quasiregular $\omega$-curve for each $k\in \N$. By Theorem \ref{thm:main-reconstruction}, there exists a subsequence $(F_{k_j})$ of $(F_k)$ convering to a limit curve $F\colon \widehat X\to \N$ satisfying (1) and (2) and which is $(1+\varepsilon)K$-quasiregular $\omega$-curve for each $\varepsilon>0$. Thus $F$ is a $K$-quasiregular $\omega$-curve.
%\end{proof}

\subsection{Outline of the proof}

The method in \cite{Cheng-Karigiannis-Madnick} is not directly at our disposal as $K$-quasiregular curves, contrary to Smith maps, are not $n$-harmonic. It also differs from the method in \cite{pankka-souto2023bubbleqr}, though a pinching map interpretation is discussed in Section~\ref{sec:GH}.

As mentioned earlier in this introduction, local uniform limits $F\colon X\to N$ of $K$-quasiregular $\omega$-curves are also $K$-quasiregular $\omega$-curves and the associated measures $\star F^*\omega$ are weak-$\star$ limits of the corresponding measures of the curves in the sequence. Together with the removability of point singularities and a nodal surgery extension of the sequence at nodal points, we iteratively resolve the singular parts of the limiting measures. Indeed, as usual in proofs of Gromov's compactness theorem, given a sequence $(F_k \colon X\to N)_{k\in \N}$ of $K$-quasiregular $\omega$-curves satisfying local integrability bounds, the measures $\star F_k^*\omega$ have a weakly converging subsequence, whose singular part consists of discrete point masses. 

We resolve each point mass by attaching a spherical stratum $\bS^n$ to each of these singular points and replace the sequence $(F_k)$ by a new sequence $(F^1_k \colon X_1 \to N)_{k\in \N}$, where $X_1$ is the bubble tree over $X$ given by the new spheres. Then this process is repeated for the sequence $(F^1_k)$. Together with a renormalization of the new maps on the bubbles, this leads to an iterative construction of the bubble tree $X_1 \subset X_2 \subset \cdots \subset \widetilde X$ and a sequence of maps $\widetilde X\to N$ having a locally uniformly converging subsequence 
$(\widehat F_\ell \colon \widehat X \to N)_{\ell\in \N}$, where $\widehat X$ is obtained from $\widetilde X$ by removing the bubbles in high enough strata $\Strata_{X_k}(\widetilde X)$, where the convergence process has stopped, that is, in which the limiting map $\widehat F$ is constant.

An advantage of this strategy is that the resulting bubble trees have no necks and consist of spherical strata by construction. 
However, a new technical difficulty arises as the maps $X_i \to N$ in the new sequences are typically not quasiregular $\omega$-curves. Therefore, for the induction step of the proof,  
we introduce a notion of \emph{asymptotically $(K,\omega)$-quasiregular sequences} in Definition \ref{def:asymptotically-quasiregular} and give the iterative argument in terms of such sequences instead of sequences of quasiregular curves. We call sequences of maps from bubble trees $X_i$ \emph{nodal pre-resolutions of $(F_k)_{k\in \N}$} and the final sequence $(\widehat F_\ell \colon \widehat X \to N)_{\ell\in \N}$ a \emph{nodal resolution of $(F_k)_{k\in\N}$}.
We refer to Definition \ref{def:nodal-resolution} for the terminology and merely note here that the asymptotic formulation of Theorem \ref{thm:main-reconstruction} in terms of sequences $(\widehat F_\ell \colon \widehat X\to N)_{\ell\in \N}$ and $(F_{k_j} \colon X \to N)_{j\in\N}$ stems from the definition of nodal resolution.

\subsection*{Acknowledgements} We thank Eero Hakavuori, Susanna Heikkil\"a, and Toni Ikonen for discussions on the topics of the article.
We also thank Toni Ikonen for comments on a preliminary version of the article.

%%%%%%%%%%%%%%%%%%%%%%%%%%%%%%%%%
%%%%%%%%%%%%%%%%%%%%%%%%%%%%%%%%%
%%%%%%%%%%%%%%%%%%%%%%%%%%%%%%%%%

\section{Preliminaries}
\label{sec:preliminaries}

\newcommand{\BLrad}{\mathrm{BLrad}}
\newcommand{\QCrad}{\mathrm{QCrad}}

Regarding Riemannian manifolds, we use the following notations for injectivity radii. Let $M$ be a Riemannian $n$-manifold and $p\in M$. We denote $B_M(p,r)$ the metric ball of radius $r$ about $p$ in $M$. Similarly, we denote $B_{T_p M}(0,r)$ the metric ball in the tangent space $T_p M$. We also denote $\exp_p \colon T_p M\to M$ the exponential map at $p$. As usual the injectivity radius $\injrad_M(p)$ of $M$ at $p$ is the number
\[
\injrad_M(p) = \sup\{ r>0 \colon \exp_p|_{B_{T_p M}(0,r)} \text{ is injective}\}.
\]
Since the bilipschitz and quasiconformality constants of the restrictions $\exp_p|_{B_{T_p M}(0,r)} \colon B_{T_p M}(0,r) \to B_M(p,r)$ tend to $1$ as $r\to 0$, we may define, for $\varepsilon>0$, the bilipschitz injectivity radii $\BLrad_{\varepsilon}(M,p)$ and the quasiconformal injectivity radii $\QCrad_{\varepsilon}(M,p)$ of $M$ at $p$ by the formulas
\[
\BLrad_\varepsilon(M,p) = \sup\{ r>0 \colon \exp_p|_{B_{T_pM}(0,r)} \text{ is $(1+\varepsilon)$-bilipschitz}\}
\]
and
\[
\QCrad_\varepsilon(M,p) = \sup \{ r>0 \colon \exp_p|_{B_{T_pM}(0,r)} \text{ is $(1+\varepsilon)$-quasiconformal}\}.
\]
Recall that an $L$-bilipschitz homeomorphism is $L^{2n}$-quasiconformal.

If $M$ has bounded geometry, the bilipschitz and quasiconformal injectivity radii $\BLrad_\varepsilon(M,p)$ and $\QCrad_\varepsilon(M,p)$ are uniformly bounded from below away from zero. Indeed, suppose $N$ has sectional curvature $\kappa$ bounded by $\kappa_N$. Then the classical Rauch comparison theorem  yields, for each $\varepsilon>0$, a uniform radius $r_{N,\varepsilon}>0$, depending only on $\kappa_N$ and $\varepsilon>0$, for which the exponential mapping $\exp_p \colon T_pN \to N$ is $(1+\varepsilon)$-bilipschitz in a ball $B_{T_p}(0,r_{N,\varepsilon})$. See e.g.~monographs of Buser and Karcher \cite[Section 6.4]{Buser-Karcher-book} or Chavel \cite[Theorem IX.2.3]{Chavel-book} or e.g.~article of Dyer, Vegter, and Wintraecken \cite[Lemma 9]{Dyer-Vegter-Wintraecken}. We conclude that, under bounded geometry assumption, 
\[
\inf_{p\in M} \BLrad_{\varepsilon}(M,p) > 0 \quad \text{and} \quad \inf_{p\in M} \QCrad_{\varepsilon}(M,p)>0.
\]

\subsection{H\"older continuity and removability of point singularities}

We begin by recalling two analytic results on quasiregular curves of finite energy due to Ikonen: local H\"older continuity \cite{ikonen2023pushforward} and the removability of point singularities \cite{ikonen2024remove}.

We state two removability theorems, both of which follow from Ikonen's theorem \cite[Theorem 1.9]{ikonen2024remove}. The first is the removability of point singularities under the assumption of the existence of continuous extensions and the second is the removability of point singularities under local energy assumptions. 

\begin{theorem}[{\cite[Theorem 1.9]{ikonen2024remove}}]
\label{thm:cont-removability}
Let $2\le n \le m$, let $M$ be a Riemannian $n$-manifold, let $P \subset M$ be a discrete set, and let $(N,\omega)$ be an $n$-calibrated Riemannian $m$-manifold having bounded geometry. Then a continuous extension $M \to N$ of a $K$-quasiregular $\omega$-curve $M\setminus P \to N$ is a $K$-quasiregular $\omega$-curve.
\end{theorem}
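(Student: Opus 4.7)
The plan is to reduce the statement to Ikonen's single-point removability result \cite[Theorem 1.9]{ikonen2024remove}. Quasiregularity of a continuous map in the sense of \eqref{eq:QRC} is both a local and an almost everywhere condition, so it suffices to verify $K$-quasiregularity of $F$ in a neighborhood of each point of $M$. Away from $P$ there is nothing to check since $F|_{M\setminus P}$ is quasiregular by hypothesis, so the only remaining task is to handle the points of $P$.

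For each $p\in P$, since $P$ is discrete I would choose a precompact coordinate ball $U_p\subset M$ with $\overline{U_p}\cap P=\{p\}$. The restriction $F|_{U_p\setminus\{p\}}$ is then a $K$-quasiregular $\omega$-curve from a punctured ball into $(N,\omega)$, and $F$ extends continuously across $p$ by assumption. The target $(N,\omega)$ has bounded geometry, and $\omega\in\Cal^n_+(N)$ guarantees $\inf_{q\in N}\|\omega_q\|_{\comass}>0$. These are exactly the hypotheses of Ikonen's theorem applied to the isolated singularity at $p$, and the conclusion is that $F|_{U_p}$ lies in $W^{1,n}_{\loc}(U_p,N)$ and satisfies the distortion inequality \eqref{eq:QRC} almost everywhere on $U_p$, so $F|_{U_p}$ is a $K$-quasiregular $\omega$-curve.

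Gluing these local conclusions over all $p\in P$ together with the given quasiregularity on $M\setminus P$ yields that $F$ is a $K$-quasiregular $\omega$-curve on all of $M$. The only genuinely analytic step is the single-point removability, which is furnished by \cite[Theorem 1.9]{ikonen2024remove}; the rest is a routine localization argument. I do not expect any substantive obstacle beyond verifying that the hypotheses of Ikonen's theorem hold in each $U_p$, which is immediate from the continuity of $F$ at $p$, the bounded geometry of $N$, and the comass lower bound on $\omega$ provided by $\omega\in\Cal^n_+(N)$.
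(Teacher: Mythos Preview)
The paper does not give its own proof of this statement; it records Theorem~\ref{thm:cont-removability} as a direct citation of Ikonen's result \cite[Theorem~1.9]{ikonen2024remove} and uses it as a black box. Your localization argument---reducing to a single isolated puncture via the discreteness of $P$ and the local nature of the Sobolev and distortion conditions, then invoking Ikonen's theorem at each point---is a correct and standard way to pass from a single-point removability statement to the discrete-set version, so there is no gap.
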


\begin{theorem}[{\cite[Theorem 1.9]{ikonen2024remove}}]
\label{thm:Ikonen-removability}
Let $2\le n \le m$, let $M$ be a Riemannian $n$-manifold, let $P \subset M$ be a discrete set, let $(N,\omega)$ be an $n$-calibrated Riemannian $m$-manifold having bounded geometry, and let $F\colon M\setminus P \to N$ be a $K$-quasiregular $\omega$-curve satisfying $\int_{M\setminus P} \norm{DF}^n < \infty$. Then $F$ has a continuous extension $\widehat F \colon M \to N$, which is a $K$-quasiregular $\omega$-curve. 
\end{theorem}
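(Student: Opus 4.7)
The plan is to reduce Theorem~\ref{thm:Ikonen-removability} to Theorem~\ref{thm:cont-removability}: if I can show that the finite-energy hypothesis alone forces $F$ to extend continuously across each point of $P$, then Theorem~\ref{thm:cont-removability} automatically promotes that continuous extension to a $K$-quasiregular $\omega$-curve. Since $P$ is discrete, I work one singular point at a time; fix $p\in P$ and choose $r_0 > 0$ so small that $\overline{B}_M(p,r_0)$ is compact and meets $P$ only at $p$. On $\overline{B}_M(p,r_0)\setminus \{p\}$, the map $F$ is a $K$-quasiregular $\omega$-curve of finite energy, and the target $(N,\omega)$ has bounded geometry; the goal is to show $\lim_{r\to 0} \operatorname{osc}(F, B_M(p,r)\setminus\{p\}) = 0$.

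First I would invoke Ikonen's local H\"older continuity estimate \cite{ikonen2023pushforward}, which in the bounded-geometry setting gives constants $C_0 = C_0(K,n,N,\omega) > 0$ and $\alpha = \alpha(K,n) \in (0,1]$ that are uniform across the domain. Applied on any ball $B_M(x, \tfrac14 d_M(x,p))$ sitting inside $M\setminus\{p\}$, this estimate yields an oscillation bound of the form
\[
\operatorname{osc}_{B_M(x,\tfrac18 d_M(x,p))} F \;\le\; C_0 \left(\int_{B_M(x, \tfrac14 d_M(x,p))} \norm{DF}^n\right)^{1/n},
\]
since in bounded geometry the injectivity radius of $N$ and the comass $\norm{\omega}_\comass$ are controlled uniformly. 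Because the underlying Hölder estimate is scale-invariant, the constant $C_0$ does not degenerate as $x\to p$.

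Next, I partition a punctured neighborhood of $p$ into dyadic annuli $A_k = B_M(p, 2^{-k+1} r_0)\setminus \overline{B}_M(p, 2^{-k-1} r_0)$. Covering each $A_k$ by a bounded number of balls of the type above and chaining H\"older estimates along consecutive balls, I obtain
\[
\operatorname{osc}_{A_k} F \;\le\; C_1\left(\int_{A_k^\ast}\norm{DF}^n\right)^{1/n},
\]
where $A_k^\ast$ is a slight enlargement of $A_k$ with uniformly bounded overlap across $k$. Since $\int_{B_M(p,r_0)\setminus\{p\}}\norm{DF}^n < \infty$, absolute continuity of the integral gives $\int_{A_k^\ast}\norm{DF}^n \to 0$, hence $\operatorname{osc}_{A_k} F \to 0$. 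Because consecutive annuli share the sphere $\partial B_M(p, 2^{-k} r_0)$, where $F$ is genuinely defined and continuous, the diameters of $F(B_M(p, 2^{-k+1}r_0)\setminus\{p\})$ form a decreasing sequence bounded by a tail sum; the Hölder exponent $\alpha$ from Ikonen's estimate lets me promote the pointwise decay $\operatorname{osc}_{A_k}F \to 0$ to geometric decay along a subsequence, which gives the desired Cauchy property and hence the existence of the limit $\lim_{x\to p} F(x)\in N$.

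The main obstacle is exactly this telescoping step: finite energy gives $\sum_k \int_{A_k^\ast}\norm{DF}^n < \infty$, but not $\sum_k \operatorname{osc}_{A_k} F < \infty$ directly, since $a \mapsto a^{1/n}$ destroys summability. I would address this by exploiting the scale-invariant form of Ikonen's Hölder estimate to compare oscillations on successive annuli up to a factor $2^{-\alpha}$, so that oscillations decay geometrically once the energy tail is small enough; equivalently, a Morrey--Campanato-type argument gives $\operatorname{osc}_{B_M(p,r)}F = o(r^{\alpha'})$ for some $\alpha' > 0$ and the limit at $p$ follows. Having thus produced a continuous extension $\widehat F\colon M\to N$, Theorem~\ref{thm:cont-removability} finishes the proof.
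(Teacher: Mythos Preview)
The paper does not prove this theorem; it is quoted from Ikonen \cite{ikonen2024remove} and used as a black box, so there is no ``paper's own proof'' to compare with. I will therefore only assess the proposal on its merits.

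Your reduction to Theorem~\ref{thm:cont-removability} is the right move, and your annular oscillation bound
\[
\operatorname{osc}_{A_k} F \;\le\; C_1\left(\int_{A_k^\ast}\norm{DF}^n\right)^{1/n}
\]
is correct. But the telescoping step is a genuine gap, and neither of your proposed fixes closes it. Finite energy gives only $\sum_k \int_{A_k^\ast}\norm{DF}^n < \infty$ with no decay rate, so a Morrey--Campanato argument has no input: you would need $\int_{B(p,r)}\norm{DF}^n \le C r^{\beta}$ for some $\beta>0$, and nothing in the hypotheses provides that. Your other suggestion, ``compare oscillations on successive annuli up to a factor $2^{-\alpha}$'', does not follow from the H\"older estimate applied on balls avoiding $p$: on such a ball $B(x,\tfrac14 d(x,p))$ the two points you compare are at distance comparable to the radius, so the factor $(|x-y|/r)^\alpha$ is of order one, not $2^{-\alpha}$. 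Working only in $M\setminus\{p\}$, you cannot escape the non-summable tail $\sum_k a_k^{1/n}$.

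The missing observation is the one recorded in the remark after Theorem~\ref{thm:Ikonen-Holder}: Ikonen's H\"older estimate requires \emph{no a priori continuity} of the curve, only the Sobolev regularity and the distortion inequality almost everywhere. Since a single point has zero $n$-capacity for $n\ge 2$, the finite-energy map $F\in W^{1,n}(B_M(p,r_0)\setminus\{p\},N)$ extends (after a Nash embedding of $N$) to an element of $W^{1,n}(B_M(p,r_0),N)$, and the distortion inequality, being an a.e.\ pointwise condition, continues to hold on the full ball. Now choose $r\le r_0$ so small that $\norm{DF}_{L^n(B_M(p,r))}\le E_N$ and apply Theorem~\ref{thm:Ikonen-Holder} directly on $B_M(p,r)$: this immediately gives a H\"older-continuous representative on $B_M(p,r/2)$, hence the continuous extension across $p$, with no telescoping at all. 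Theorem~\ref{thm:cont-removability} then finishes the job exactly as you say.
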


Regarding H\"older continuity, we have the following version of Ikonen's H\"older continuity theorem for quasiregular curves with locally small energy. For the statement we define the following constant associated to H\"older bounds of quasiregular curves of small energy.

\begin{definition}
A constant $E_N>0$ is a \emph{small energy bound of an $n$-calibrated $m$-manifold $(N,\omega)$} if, for each $K\ge 1$, there exists constants $\alpha=\alpha(n,K,N,\omega)\in (0,1]$ and $C=C(n,K,N,\omega) \ge 1 $ having the following property: Each $K$-quasiregular $\omega$-curve $F\colon B^n \to N$ satisfying 
\[
\norm{DF}_{L^n} \le E_N
\]
is $\alpha$-H\"older in $B^n(1/2)$ with constant $C \norm{DF}_{L^n}$, that is, 
\[
d_N(F(x),F(y)) \le C \norm{DF}_{L^n}\norm{x-y}^\alpha 
\]
for $x,y\in B^n(0,1/2)$.
\end{definition}

In these terms, Ikonen's theorem states that a calibrated manifold $(N,\omega)$ of bounded geometry has a small energy bound. 
Note that the isoperimetric assumptions in \cite[Theorem 6.8]{ikonen2023pushforward} hold under our bounded geometry assumption; see Proposition 4.2 and discussion in Section 4 in 
\cite{ikonen2024remove}. 

\begin{theorem}[{\cite[Theorem 6.8]{ikonen2023pushforward}, \cite[Proposition 4.2]{ikonen2024remove}}]
\label{thm:Ikonen-Holder}
Let $2\le n \le m$, let $M$ be a Riemannian $n$-manifold, let $N$ be a Riemannian $m$-manifold having bounded geometry, and let $\omega \in \Cal^n_+(N)$ be a calibration. Then there exists a constant $E_N>0$ having the following property: Let $F\colon B^n \to N$ be a $K$-quasiregular $\omega$-curve satisfying 
\[
\norm{DF}_{L^n} = \left( \int_{B^n} \norm{DF}^n\right)^{1/n}\le E_N.
\]
Then
\[
d_N(F(x),F(y)) \le C \norm{DF}_{L^n}\norm{x-y}^\alpha 
\]
for $x,y\in B^n(0,1/2)$, where $C>0$ and $\alpha\in (0,1]$ depend only on $n$, $K$, $\omega$, and bounded geometry data of $N$. 
\end{theorem}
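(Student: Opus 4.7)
The plan is to establish a Morrey-type energy decay on small balls --- $\int_{B(x_0, r)} \|DF\|^n \le C r^{n\alpha}$ uniformly in $x_0 \in B^n(1/2)$ for sufficiently small $r$ --- and then to deduce the stated $\alpha$-Hölder bound from this growth via the standard oscillation estimate on concentric spheres. The engine will be a Caccioppoli-type inequality derived from the closedness of $\omega$, run in a Widman-style hole-filling iteration.

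First, I would exploit bounded geometry of $N$, through the Rauch comparison theorem recalled above, to fix a uniform \emph{primitive radius} $\rho_0 = \rho_0(N, \omega) > 0$ with the property that on every geodesic ball $B_N(q, \rho_0)$ the calibration admits a primitive $\eta_q$ constructed from the Poincaré cone homotopy anchored at $q$ and therefore satisfying the pointwise bound $\|\eta_q(p)\| \le C \, d_N(p, q)$. For any ball $B_r = B^n(x_0, r)$ whose image lies in a single such primitive chart about $q_0 = F(x_0)$, I would test the identity $F^*\omega = d(F^*\eta_{q_0})$ against a radial cutoff $\phi \in C_c^\infty(B_r)$ with $\phi \equiv 1$ on $B_{r/2}$ and $|\nabla\phi| \le C/r$, combine with the distortion inequality \eqref{eq:QRC} and Hölder's inequality, and use the gauge bound $\|\eta_{q_0}\|_{L^\infty(F(B_r))} \le C \diam F(B_r)$ to obtain a Caccioppoli-type estimate
\begin{equation*}
\int_{B_{r/2}} \|DF\|^n \;\le\; C \diam F(B_r) \left( \int_{B_r \setminus B_{r/2}} \|DF\|^n \right)^{(n-1)/n}.
\end{equation*}
Pairing this with the reverse-Morrey estimate $\diam F(B_r)^n \le C \int_{B_{2r}} \|DF\|^n$ --- which on primitive-chart balls follows from Stokes' theorem applied directly to $F^*\omega$ and the quasiregularity of $F$ --- yields a hole-filling inequality $E(r/2) \le \theta\, E(r)$ with $\theta = \theta(n, K, \omega) < 1$ whenever $E(r) = \int_{B_r} \|DF\|^n$ is small enough. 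Iterating over dyadic scales gives $\int_{B(x_0, r)} \|DF\|^n \le C \|DF\|_{L^n(B^n)}^n\, r^{n\alpha}$ with $\alpha = -\log_2 \theta / n \in (0, 1]$, and a spherical-mean oscillation argument converts this Morrey growth into the claimed pointwise Hölder bound with multiplicative constant proportional to $\|DF\|_{L^n(B^n)}$.

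The principal obstacle is the bootstrap: every step above is conditioned on the image $F(B_r)$ being contained in a single primitive chart $B_N(q_0, \rho_0)$, i.e.\ on a preliminary diameter bound --- whereas continuity of $F$ is what we ultimately wish to prove. I would settle this by choosing $E_N$ small enough that the global smallness $\|DF\|_{L^n(B^n)} \le E_N$ forces $\diam F(B^n(3/4)) \le \rho_0/2$, via a compactness/contradiction argument using the removability theorems (Theorems \ref{thm:cont-removability} and \ref{thm:Ikonen-removability}) together with the stability property, recalled in the introduction, that locally uniform limits of $K$-quasiregular $\omega$-curves are again $K$-quasiregular $\omega$-curves: a putative sequence of counterexamples of vanishing energy but bounded-below oscillation would pass to the limit to a non-constant $K$-quasiregular $\omega$-curve of zero energy, which is impossible. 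Once this preliminary diameter control is in place, the iteration above runs uniformly on all balls centered in $B^n(1/2)$, and tracking the constants through the argument yields the stated dependence of $C$ and $\alpha$ on $n$, $K$, $\omega$, and the bounded geometry data of $N$.
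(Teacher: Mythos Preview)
This theorem is not proved in the paper --- it is cited from Ikonen \cite{ikonen2023pushforward, ikonen2024remove}, and the remark following the statement indicates that the original argument proceeds through an explicit isoperimetric inequality in $N$ (supplied by the bounded-geometry hypothesis via \cite[Proposition 4.2]{ikonen2024remove}), with no a priori continuity of $F$ assumed. So there is no in-paper proof to compare against; nevertheless, your proposal has a genuine gap.

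The bootstrap step is circular. To run the compactness/contradiction argument you must extract, from a sequence of $K$-quasiregular $\omega$-curves $F_j$ with $\norm{DF_j}_{L^n}\to 0$ but $\diam F_j(B^n(3/4)) \ge \rho_0/2$, a \emph{locally uniformly} converging subsequence; but local uniform convergence comes from Arzel\`a--Ascoli, which needs equicontinuity, and equicontinuity under a small-energy hypothesis is exactly the theorem being proved. Weak $W^{1,n}$ compactness gives a limit of zero energy, hence constant a.e., but provides no control on the oscillation of the $F_j$ themselves, so the contradiction does not close. The removability results you cite (Theorems \ref{thm:cont-removability} and \ref{thm:Ikonen-removability}) do not help: they upgrade a given continuous extension to a quasiregular one, or produce a single extension from finite energy on a punctured domain --- neither manufactures equicontinuity of a sequence. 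Relatedly, the ``reverse-Morrey'' step $\diam F(B_r)^n \le C\int_{B_{2r}}\norm{DF}^n$ is not available from Stokes alone: for equidimensional quasiregular maps this oscillation bound comes from monotonicity (the maximum principle), which is unavailable for curves into a higher-dimensional target; Stokes only controls $\int_{B_r} F^*\omega$, not the diameter of the image. This is precisely where Ikonen's isoperimetric argument enters, and it is what allows the iteration to start without any preliminary diameter control. Your Caccioppoli/hole-filling skeleton is otherwise reasonable, but without a non-circular replacement for the bootstrap --- for instance the isoperimetric route --- it does not close.
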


\begin{remark}
Note that no \emph{a priori} continuity of the quasiregular curve is needed for this H\"older continuity result. We refer to \cite{ikonen2023pushforward} for a detailed discussion. Additionally therein the small energy bound \(E_N\) is given explicitly using the isoperimetric assumptions on \(N\). Thus \(E_N\) depends only on the geometry of \(N\) and not on the form $\omega$.
\end{remark}

As a corollary, we obtain the local H\"older continuity of quasiregular curves $M\to N$ below the $2$-bilipschitz injectivity radius of the manifold \(M\) at a point. We also denote, for $\alpha>0$,
\[
|F|_{0,\alpha} = \sup_{\substack{x,y\in M \\ x\ne y}} \frac{d_N(F(x),F(y))}{d_M(x,y)^\alpha}
\]
the $\alpha$-H\"older norm of a quasiregular curve $F\colon M \to N$.

\begin{corollary}
%[Local H\"older continuity of quasiregular curves]
\label{cor:local-holder-cont}
Let $2\le n \le m$, let $M$ be an oriented Riemannian $n$-manifold, let $(N,\omega)$ be an $n$-calibrated Riemannian $m$-manifold having bounded geometry, and let $E_N>0$ be a small energy bound for $N$.
Then a $K$-quasiregular $\omega$-curve $F\colon M \to N$ is locally H\"older continuous. More precisely, for $p\in M$ and $0<r<\BLrad_1(M,p)$ satisfying $\norm{DF}_{L^n(B_M(p,r))} \le E_N$, it holds
\[
|F_{|B_M(p,r/2)}|_{0,\alpha/4} \le C \norm{DF}_{L^n(B_M(p,r))},
\]
where $\alpha=\alpha(n,K,N,\omega)\in (0,1]$ and $C=C(n,K,N,\omega)\ge 1$.
\end{corollary}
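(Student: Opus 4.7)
The plan is to reduce the estimate to the Euclidean unit-ball setting of Theorem~\ref{thm:Ikonen-Holder} by pulling back $F$ through a suitably rescaled exponential chart at $p$, and then chain the resulting local H\"older estimates to cover $B_M(p,r/2)$.

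Fix a linear isometry $\iota\colon\R^n\to T_pM$ and set $\Psi\colon B^n(0,1)\to M$ by $\Psi(x):=\exp_p(\tfrac r2 \iota(x))$. Because $\tfrac r2|x|\le r/2<\BLrad_1(M,p)$, the restriction $\exp_p|_{B_{T_pM}(0,r/2)}$ is $2$-bilipschitz, so
\[
\tfrac r4 |x-y|\;\le\;d_M(\Psi(x),\Psi(y))\;\le\;r|x-y|\qquad(x,y\in B^n(0,1)),
\]
and $\Psi(B^n(0,1))\subset B_M(p,r)$. After normalizing out the scale $r/2$, $\Psi$ is $2$-bilipschitz and therefore $2^{2n}$-quasiconformal. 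Consequently $G:=F\circ\Psi\colon B^n(0,1)\to N$ is a $K'$-quasiregular $\omega$-curve with $K':=2^{2n}K$, and a change of variables using the Jacobian bounds gives $\norm{DG}_{L^n(B^n(0,1))}\le 4\,\norm{DF}_{L^n(B_M(p,r))}\le 4E_N$. Taking the corollary's $E_N$ to be one quarter of the small-energy threshold that Theorem~\ref{thm:Ikonen-Holder} supplies for distortion $K'$, that theorem yields
\[
d_N(G(x),G(y))\;\le\;C_1\,\norm{DG}_{L^n}\,|x-y|^{\alpha_1}\qquad(x,y\in B^n(0,1/2)),
\]
with $\alpha_1:=\alpha(n,K',N,\omega)\in(0,1]$ and $C_1:=C(n,K',N,\omega)$.

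Pulling back via the lower bilipschitz bound $|x-y|\le(4/r)d_M(\Psi(x),\Psi(y))$ gives a H\"older estimate for $F$ on $\Psi(B^n(0,1/2))\supset B_M(p,r/8)$ whose constant contains the scale factor $r^{-\alpha_1}$. The same construction, performed with the translated chart $x\mapsto\exp_p(\exp_p^{-1}(q)+\tfrac{r}{4}\iota(x))$ at any $q\in B_M(p,r/2)$ (whose image lies in $B_M(p,r)$ by the triangle inequality, so that the energy hypothesis remains in force), produces local H\"older estimates on balls $B_M(q,r/16)$, and these cover $B_M(p,r/2)$.

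Finally, to convert the family of local estimates into the stated global H\"older inequality with constant depending only on $(n,K,N,\omega)$, I chain a universally bounded number of local estimates to obtain a uniform diameter bound $\mathrm{diam}\,F(B_M(p,r/2))\le C\norm{DF}_{L^n(B_M(p,r))}$, and then interpolate between this diameter bound and the local H\"older estimates using that $d_M(u,v)\le r$ throughout $B_M(p,r/2)$. The exponent reduction from $\alpha_1$ down to $\alpha/4:=\alpha_1/4$ is the quantitative slack needed to absorb the residual scale factor $r^{-\alpha_1}$ into a universal constant. The principal obstacle is precisely this assembly step: carefully tracking how the local H\"older constants depend on $r$, chaining them into a global estimate on $B_M(p,r/2)$, and choosing the exponent reduction so that the final H\"older constant is $r$-independent.
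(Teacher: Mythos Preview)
Your reduction via an exponential chart to Theorem~\ref{thm:Ikonen-Holder} is the paper's approach. The paper, however, uses a single $2$-bilipschitz chart $\varphi = \exp_p^{-1}|_{B_M(p,r)} \colon B_M(p,r) \to B^n(0,r)$ covering the entire ball and simply observes that $F \circ \varphi^{-1} \colon B^n(0,r) \to N$ is a $4^nK$-quasiregular $\omega$-curve with $\norm{D(F\circ\varphi^{-1})} \le 2\norm{DF}$ almost everywhere. Your chaining over a family of translated sub-charts is therefore unnecessary: one chart already maps $B_M(p,r/2)$ onto $B^n(0,r/2)$, so a single invocation of Theorem~\ref{thm:Ikonen-Holder} (after rescaling $B^n(0,r)$ to the unit ball) handles the whole region.

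There is a genuine gap in your assembly step. Reducing the H\"older exponent from $\alpha_1$ to $\alpha_1/4$ does \emph{not} absorb the scale factor $r^{-\alpha_1}$ into a universal constant. Since $d_M(u,v)/r \le 1$ on $B_M(p,r/2)$, exponent reduction only gives
\[
\bigl(d_M(u,v)/r\bigr)^{\alpha_1} \le \bigl(d_M(u,v)/r\bigr)^{\alpha_1/4} = r^{-\alpha_1/4}\, d_M(u,v)^{\alpha_1/4},
\]
so the residual factor $r^{-\alpha_1/4}$ still blows up as $r\to 0$. Interpolation with your diameter bound fares no better: writing $d_N(F(u),F(v)) \le (C\norm{DF}_{L^n})^{1-\theta}(C\norm{DF}_{L^n}\, r^{-\alpha_1} d_M(u,v)^{\alpha_1})^\theta$ and taking $\theta = 1/4$ reproduces the same $r^{-\alpha_1/4}$ factor. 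In short, no choice of exponent or interpolation parameter makes the constant $r$-independent for small $r$. The paper's two-sentence proof does not spell this issue out either; note, however, that its only downstream application (the diameter estimate in Theorem~\ref{thm:analytic-energy-gap}) actually uses the inequality with the scale factor $r_M^\alpha$ appearing explicitly rather than absorbed.
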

\begin{proof}
It suffices to make the following observations. First, for $p\in M$, we may isometrically identify $T_p M$ with $\R^n$ and obtain a $2$-bilipschitz chart $\varphi = \exp_p^{-1}|_{B_M(p,r)} \colon B_M(p,r) \to B^n(0,r)$, which maps $B_M(p,r/2)$ onto $B^n(0,r/2)$. Second, the composition $F \circ \varphi^{-1} \colon B^n(0,r) \to N$ is a $4^n K$-quasiregular $\omega$-curve satisfying $\norm{D(F \circ \varphi^{-1})} \le 2 \norm{DF}$ almost everywhere in $B^n(0,r)$. 
\end{proof}

%%%%%%%%%%%%%%%%%%%%%%%%%%%%%%%%%%%%%%%%%%%%%%%
%%%%%%%%%%%%%%%%%%%%%%%%%%%%%%%%%%%%%%%%%%%%%%%
%%%%%%%%%%%%%%%%%%%%%%%%%%%%%%%%%%%%%%%%%%%%%%%

\subsection{Precomposition of curves by quasiregular mappings}

As a preliminary result, we show that the pre-composition of a quasiregular 
$\omega$-curve with a quasiregular mapping is a quasiregular $\omega$-curve. This is a counterpart of the classical result that the composition of quasiregular mappings is quasiregular. We adapt the analytic proof of Bojarski and Iwaniec \cite{bojarski1983analytical}.
We shall only need this for the case of \(1\)-quasiconformal mappings between nodal manifolds, however we state the general version for future convenience. The nodal manifold version is obtained by applying the result to each strata.

\begin{theorem}
\label{lem:pre-comp-qrc-with-qr}
  Let \(N\), \(M\), and \(\Sigma\) be connected and oriented Riemannian manifolds of dimension \(m\), \(n\) and \(n\) respectively, and let \(\omega \in \Omega^n(N)\) be a closed, non-vanishing \(n\)-form, for \(m \geq n \geq 2\).
If \(\phi\colon M \to \Sigma\) is a \(K_{\phi}\)-quasiregular mapping and \(F\colon \Sigma \to N\) is a \(K\)-quasiregular \(\omega\)-curve, then \(F\circ \phi \colon M \to N\) is a \((K_{\phi}K)\)-quasiregular \(\omega\)-curve.
\end{theorem}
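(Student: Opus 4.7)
The plan is to reduce the statement to the pointwise chain rule and then use the distortion inequalities of $\phi$ and $F$ together with the naturality of pull-back under composition. Since everything is strata-wise, I will treat $M$ and $\Sigma$ as genuine Riemannian $n$-manifolds.

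First, I would establish the Sobolev regularity $F\circ \phi \in W^{1,n}_{\loc}(M,N)$. Here I follow Bojarski--Iwaniec \cite{bojarski1983analytical}: a (nonconstant) $K_\phi$-quasiregular mapping $\phi$ is continuous, discrete, open, differentiable almost everywhere, satisfies Lusin's condition (N), and the change-of-variables formula
\[
\int_M (g\circ \phi)\, J_\phi\, d\vol_M = \int_\Sigma g(y)\, N(\phi, y)\, d\vol_\Sigma(y)
\]
holds for nonnegative Borel $g$, where $N(\phi,\cdot)$ is the multiplicity function, locally bounded since $\phi$ is discrete and open. Testing against $g = \norm{DF}^n$ and using the distortion inequality $\norm{D\phi}^n \le K_\phi J_\phi$ yields local $L^n$-integrability of $\norm{DF\circ\phi}\cdot\norm{D\phi}$. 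Combining this with the standard chain rule for compositions of Sobolev maps with quasiregular maps (again in the sense of \cite{bojarski1983analytical}) gives $F\circ \phi \in W^{1,n}_{\loc}(M,N)$ together with the pointwise identity
\[
D(F\circ \phi)(x) = DF(\phi(x)) \circ D\phi(x) \quad \text{for a.e. } x\in M.
\]

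Next, I would verify the distortion inequality pointwise at a.e.\ $x$ where all the relevant derivatives exist. The submultiplicativity of the operator norm gives
\[
\norm{D(F\circ\phi)(x)}^n \leq \norm{DF(\phi(x))}^n \cdot \norm{D\phi(x)}^n.
\]
Applying the quasiregularity condition \eqref{eq:QRC} to $F$ at $\phi(x)$ and the quasiregularity of $\phi$ at $x$ then yields
\[
(\norm{\omega}_\comass \circ F \circ \phi)(x)\, \norm{D(F\circ \phi)(x)}^n
\le K\, (\star F^*\omega)(\phi(x))\cdot K_\phi J_\phi(x).
\]
At this point the naturality of the pull-back, namely $(F\circ \phi)^*\omega = \phi^*(F^*\omega)$ almost everywhere, and the Hodge-star identity
\[
\star\, \phi^*(F^*\omega)(x) = J_\phi(x)\cdot (\star F^*\omega)(\phi(x))
\]
(which holds a.e.\ because $F^*\omega$ is a measurable $n$-form on $\Sigma$ and the $n$-dimensional Jacobian of $\phi$ multiplies top-degree forms under pull-back) collapses the right-hand side to $K K_\phi\,(\star (F\circ \phi)^*\omega)(x)$. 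This is precisely the $(K_\phi K)$-quasiregular $\omega$-curve distortion inequality for $F\circ\phi$.

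The main delicate step, and the only place where one must be careful, is the justification of the chain rule and Sobolev regularity of $F\circ \phi$ when $F$ is merely a $W^{1,n}_{\loc}$-Sobolev map rather than smooth. This is exactly the point where the analytic theory of quasiregular mappings enters: Lusin's condition (N) and the change-of-variables formula ensure that null sets in $\Sigma$ where $F$ is not differentiable pull back to null sets in $M$, and that the $L^n$-integrability needed for a Sobolev composition is controlled by $K_\phi$ times the $L^n$-energy of $F$ on the image. The algebraic manipulation in the last step is then essentially formal. Everything transfers verbatim to the case where the domain is a Riemannian nodal $n$-manifold by applying the result stratum-wise and noting that a $1$-quasiconformal map between nodal manifolds preserves strata and is $1$-quasiconformal on each.
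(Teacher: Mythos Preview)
Your proof is correct and follows essentially the same route as the paper: first establish Sobolev regularity and the chain rule via Bojarski--Iwaniec, then verify the distortion inequality pointwise using naturality of pull-back. One terminological slip worth fixing: the property you need (and describe) is that null sets in $\Sigma$ pull back to null sets in $M$, which is Lusin's condition $(N^{-1})$, not $(N)$; the paper invokes $(N^{-1})$ explicitly at precisely this step to transfer the a.e.\ distortion inequality of $F$ to points $\phi(x)$ for a.e.\ $x\in M$.
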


We begin by showing that the composition is in the correct Sobolev space.

\begin{lemma}\label{lem:pushforward-sobolev-qr}
  Let \(N\), \(M\), and \(\Sigma\) be connected and oriented Riemannian manifolds of dimension \(m\), \(n\) and \(n\) respectively.
  If \(\phi \colon M \to \Sigma\) is a quasiregular mapping and \(F \in W^{1,n}_{\text{loc}}(\Sigma, N)\), then \(F \circ \phi \in W^{1,n}_{\text{loc}}(M,N)\) and \(D(F \circ \phi) = DF \circ D\phi\) almost everywhere in \(M\).
\end{lemma}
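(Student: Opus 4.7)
The plan is to approximate \(F\) by smooth maps and pass to the limit using the change of variables formula for quasiregular mappings together with the distortion inequality \(\norm{D\phi}^n \le K_\phi J_\phi\). Using a Nash isometric embedding of \(N\) into some \(\R^L\), I reduce to the case \(F \in W^{1,n}_\loc(\Sigma,\R^L)\) and fix a relatively compact open set \(U \Subset M\). By Reshetnyak's theorem, \(\phi\) is discrete and open, so the multiplicity function \(y \mapsto \#(\phi^{-1}(y) \cap U)\) is bounded a.e.\ by a constant \(C_U\) on any compact subset of \(\Sigma\); moreover \(J_\phi > 0\) almost everywhere whenever \(\phi\) is non-constant (the constant case being trivial), which yields Luzin's \(N^{-1}\) condition \(|\phi^{-1}(E) \cap U| = 0\) for every null set \(E \subset \Sigma\).

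Choosing smooth \(F_k \in C^\infty(\Sigma,\R^L)\) with \(F_k \to F\) pointwise almost everywhere and in \(W^{1,n}(V,\R^L)\) on some relatively compact \(V \Supset \phi(\overline U)\), each \(F_k \circ \phi\) lies in \(W^{1,n}_\loc(U)\) with the classical chain rule. The distortion inequality combined with the change of variables formula
\[
\int_U g(\phi(x)) J_\phi(x) \dx = \int_\Sigma g(y)\, \#(\phi^{-1}(y) \cap U) \dy
\]
applied to \(g = \norm{DF_k - DF_l}^n\) yields
\[
\int_U \norm{(DF_k - DF_l)(\phi)\, D\phi}^n \dx \le K_\phi C_U \norm{DF_k - DF_l}_{L^n(V)}^n \to 0,
\]
so that \((D(F_k \circ \phi))_k\) is Cauchy in \(L^n(U)\). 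The pointwise a.e.\ convergences \(F_k \circ \phi \to F \circ \phi\) and \(DF_k(\phi) \to DF(\phi)\) follow from the corresponding convergences on \(\Sigma\) together with Luzin's \(N^{-1}\) condition for \(\phi\), and these identify the \(L^n\) limit of the derivatives as \((DF)(\phi) D\phi\).

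The main obstacle is promoting these limits to full \(W^{1,n}_\loc\) membership of \(F \circ \phi\) itself, since the analogous change of variables only gives \(F_k \circ \phi \to F \circ \phi\) in \(L^n(U, J_\phi \dx)\), a weight that may be arbitrarily small on sets of positive Lebesgue measure. I would first handle bounded \(F\) by choosing \(F_k\) with a uniform \(L^\infty\)-bound, so that dominated convergence converts the pointwise a.e.\ convergence into \(L^n(U)\) convergence; this immediately yields \(F \circ \phi \in W^{1,n}_\loc(M)\) with the claimed chain rule. The general case then follows by radial truncations \(F^R\) into a ball of radius \(R\) in \(\R^L\); since \(\norm{DF^R} \le \norm{DF}\), the compositions \(F^R \circ \phi\) are uniformly bounded in \(W^{1,n}_\loc(M)\) modulo additive constants via the Poincaré inequality on \(U\), allowing one to send \(R \to \infty\) and identify the limit with \(F \circ \phi\), establishing the Sobolev regularity and chain rule in full generality.
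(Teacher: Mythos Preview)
Your argument is correct and essentially reproves from scratch the Euclidean ingredient that the paper simply cites. The paper's proof localizes via a Nash embedding of $N$ and $2$-bilipschitz charts on $M$ and $\Sigma$, thereby reducing to the case of a quasiregular map between Euclidean domains composed with a $W^{1,n}_\loc$ map into $\R^{n'}$; at that point it invokes \cite[Lemma~9.6]{bojarski1983analytical} directly, together with chain-rule lemmas from \cite{kangasniemi2021notes} to transfer back through the charts. Your approach instead carries out the approximation argument underlying that cited lemma: smooth approximants $F_k$, the change-of-variables identity combined with $\norm{D\phi}^n \le K_\phi J_\phi$ to control $D(F_k\circ\phi)$ in $L^n$, the Lusin $N^{-1}$ property to transfer a.e.\ convergence through $\phi$, and finally the bounded case plus radial truncation and Poincar\'e to handle the $L^n$-part of the norm. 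The paper's route is shorter and delegates the analytic work to the literature; yours is more self-contained and makes explicit where each structural property of quasiregular maps (discreteness, bounded multiplicity, $J_\phi>0$ a.e.) enters. Both are valid; the truncation-plus-Poincar\'e step at the end of your sketch deserves one more line of justification (the constants $c_R$ must converge to a finite limit because $F^R\circ\phi - c_R$ converges in $L^n$ while $F^R\circ\phi$ converges a.e.), but the idea is sound.
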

\begin{proof}
  Let \(\iota\colon N \to \R^{n^{\prime}}\) be a Nash embedding, and let \((V, \psi)\) and \((U, \sigma)\) be \(2 \)-bilipschitz charts in \(\Sigma\) and \(M\) respectively for which \(\varphi( U) \subset V\).
  Thus \(\widetilde{\varphi} = \psi \circ \varphi \circ \sigma^{-1} \in W^{1,n}_{\text{loc}}(\sigma U,\psi V)\) and \(\iota \circ F \in W^{1,n}_{\text{loc}}(\Sigma,\R^{n^{\prime}})\) which implies \(\widetilde{F} = \iota \circ F \circ \psi^{-1} \in W^{1,n}_{\text{loc}}(\psi V,\R^{n^{\prime}})\).
Since a composition of a quasiregular mapping with a bilipschitz mapping is quasiregular, the map $\widetilde \phi$ is quasiregular. 
%{\color{blue}
Then by \cite[Lemma~9.6]{bojarski1983analytical}, the fact that \(\psi\) and \(\sigma \) are bilipschitz, and by \cite[Corollary 5.5 and Lemma 6.3]{kangasniemi2021notes}, 
we have that
\(\widetilde{F} \circ \widetilde{\phi} = \iota \circ F \circ \phi \circ \sigma^{-1}  \in W^{1,n}_{\text{loc}}(\sigma U, \R^{n^{n\prime}})\),
\(D(\widetilde{F} \circ \widetilde{\phi}) = D\widetilde{F} \circ D\widetilde{\phi} = D\iota \circ DF \circ D\phi \circ D\sigma^{-1}\), and
\(D(\widetilde{F}\circ \widetilde{\phi}) = D(\iota\circ F\circ \phi \circ \sigma^{-1}) = D\iota \circ D(F\circ \phi) \circ D\sigma^{-1}\).
Hence, \(D(F\circ \phi) = DF \circ D\phi\) almost everywhere.
\end{proof}

\begin{proof}[Proof of Proposition~\ref{lem:pre-comp-qrc-with-qr}]
  Lemma~\ref{lem:pushforward-sobolev-qr} implies that \(F\circ \phi \in W^{1,n}_{\text{loc}}(M,N)\) and \(D(F\circ \phi) = DF \circ D\phi\) almost everywhere in \(M\). So it suffices to show that \(F\circ \phi\) satisfies the distortion inequality.

We have that \((\norm{\omega} \circ F)\norm{DF}^n \leq K\star F^{ * }\omega\) almost everywhere in \(\Sigma\), and \(\norm{D\phi}^n \leq K_{\phi} (\star\phi^{ * }\vol_{\Sigma}) \) almost everywhere in \(M\). Since \(\phi\) satisfies the Lusin \((N^{-1})\) condition,  we obtain that
\((\norm{\omega}\circ F)_{\phi(x)}\norm{DF_{\phi(x)}}^n \leq K \star (F^{ * }\omega)_{\phi(x)}\) for almost every \(x \in M\). 
Thus
\begin{align*}
(\norm{\omega}\circ (F\circ \phi))_x\norm{D(F\circ \phi)_x}^n
  &\leq (\norm{\omega}\circ F)_{\phi(x)}\norm{DF_{\phi(x)}}^n \norm{D\phi_x}^n \\
  &\leq (K K_{\phi})\left((\star F^{ * }\omega) \circ \phi\right) (\star\phi^{ * }\vol_{\Sigma}) \\
  &= (K K_{\phi})\left(\star( \phi^* (\star F^{ * }\omega) \phi^{ * }\vol_{\Sigma})\right) \\
  &= (K K_{\phi})\left(\star( \phi^* \left( (\star F^{ * }\omega) \vol_{\Sigma})\right)\right) \\
  &= (K K_{\phi})\left(\star( \phi^* (F^{ * }\omega)\right) \\
  &= (K K_{\phi})\left(\star( (F\circ \phi)^* \omega)\right).
\end{align*}
This concludes the proof.
\end{proof}

%%%%%%%%%%%%%%%%%%%%%%%%%%%%%%%%%%%%%%%%%%%%%%%
%%%%%%%%%%%%%%%%%%%%%%%%%%%%%%%%%%%%%%%%%%%%%%%
%%%%%%%%%%%%%%%%%%%%%%%%%%%%%%%%%%%%%%%%%%%%%%%

\subsection{Weak compactness of measures}

For the reader's convenience, we recall here the following weak compactness theorem for Radon measures. This follows from the the Banach--Alaoglu Theorem by a standard diagonal argument and the Riesz Representation Theorem; see {\cite[Theorem~4.4]{Simon-geo-meas-book-1983}}. We shall apply this to the measures \(\mu = \star F^* \omega\) induced by quasiregular $\omega$-curves $F\colon M\to N$.

\begin{theorem}[Weak Compactness of Radon Measures]
\label{thm:weak-compact-radon-measures}
Let \(X\) be a locally compact and \(\sigma\)-compact Hausdorff space. Suppose \((\mu_k)_{k\in\N}\) is a sequence of Radon measures satisfying \(\sup_{k \in \N}\mu_k(K) < \infty\) for all compact subsets \(K \subset X\). Then \((\mu_k)_{k\in\N}\) has a subsequence \((\mu_{k_j})_{j\in\N}\) and there exists a Radon measure \(\mu\) on \(X\) such that \(\mu_{k_j} \weakto \mu\) in the weak-\(\star\) topology. Moreover, if $X$ is compact, then $\mu(X) = \limsup_{j\to \infty} \mu_{k_j}(X)$.
\end{theorem}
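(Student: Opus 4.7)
The plan is to realize the limit $\mu$ via the Riesz representation theorem, exhibiting it as the Radon measure representing the weak-$\star$ limit of the positive linear functionals $L_k(f) = \int_X f \dmu_k$ on $C_c(X)$. First, using the local compactness and $\sigma$-compactness of $X$, I would fix a compact exhaustion $K_1 \subset \interior(K_2) \subset K_2 \subset \interior(K_3) \subset \cdots$ with $X = \bigcup_j K_j$, so that for any $f \in C_c(X)$ supported in $K_j$ the hypothesis yields the uniform bound $|L_k(f)| \le \norm{f}_\infty \sup_k \mu_k(K_j) < \infty$ independently of $k$.

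Next, since $C_c(X)$ is separable in the settings of interest, I would fix a countable dense subset $\{f_n\}_{n\in\N}$. For each fixed $n$ the real sequence $(L_k(f_n))_{k\in\N}$ is bounded, so a standard diagonal extraction produces a subsequence $(k_j)_{j\in\N}$ for which $L_{k_j}(f_n)$ converges for every $n$. A routine $3\varepsilon$-argument, combined with the uniform local mass bounds from the previous step, then upgrades this to convergence $L_{k_j}(f) \to L(f)$ for every $f \in C_c(X)$, and the resulting map $L \colon C_c(X) \to \R$ is a positive linear functional.

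Finally, the Riesz representation theorem supplies a unique Radon measure $\mu$ on $X$ with $L(f) = \int_X f \dmu$ for every $f \in C_c(X)$, which is precisely the claimed weak-$\star$ convergence $\mu_{k_j} \weakstarto \mu$. For the moreover statement in the compact case, the constant function $\mathbf{1}$ belongs to $C_c(X) = C(X)$, so testing against it gives $\mu_{k_j}(X) = L_{k_j}(\mathbf{1}) \to L(\mathbf{1}) = \mu(X)$, whence in particular $\mu(X) = \limsup_{j\to\infty}\mu_{k_j}(X)$. I do not anticipate a substantial obstacle; the only mildly delicate point is the upgrade from convergence on a dense countable subset to convergence on all of $C_c(X)$, which is a standard application of the uniform local mass bounds.
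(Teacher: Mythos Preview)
Your proposal is correct and matches the approach the paper indicates (Banach--Alaoglu combined with a diagonal argument and the Riesz representation theorem; the paper simply cites Simon rather than writing out a proof). The only caveat is your appeal to separability of $C_c(X)$, which does not follow from the stated hypotheses alone; it holds, however, in the second countable setting relevant to the paper, and you flag this yourself with ``in the settings of interest''.
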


\subsection{Nodal manifolds}

We discuss some preliminaries related to nodal manifolds. Recall, from the introduction, that a metrizable space $X$ is a nodal $n$-manifold if $X = \bigcup_{\alpha \in \Lambda} M_\alpha$, where each $M_\alpha$ is an $n$-manifold and, for $\alpha\ne \beta$, the intersection $M_\alpha\cap M_\beta$ is either a singleton or the empty set; such a manifold $M_\alpha$ is called a \emph{stratum of $X$} and the points $p\in X$, which are shared by at least two different strata, are called the singular points of $X$. The set $\Sing(X)$ of singular points of $X$, and hence also the set $\Strata(X)$ of all strata of $X$, is uniquely determined.

In what follows, we assume that a nodal $n$-manifold $X$ is connected, oriented, and Riemannian, in the sense that the strata of $X$ are oriented and Riemannian. Then each stratum $M\in \Strata(X)$ has Riemannian distance $d_M$. To define a length metric on $X$, we say that a path $\gamma \colon [0,1]\to X$ is \emph{admissible} if there exists an (admissible) partition $0=s_0 < \cdots < s_k = 1$ of $[0,1]$ for which $\gamma([s_{j-1},s_j])$ is contained in a single stratum $M_j$ of $X$. The length $\ell(\gamma)$ of an admissible path is
\[
\ell(\gamma) = \sup_{0=s_1 <\cdots < s_k =1} \sum_{j=1}^k \ell_{M_j}(\gamma|_{[s_{j-1},s_j]}),
\]
where the supremum is taken over all admissible partitions of $\gamma$; as usual, all admissible partitions yield the same sum of lengths. The length metric $d_X \colon X\times X\to [0,\infty)$ on $X$ is now given by formula
\[
d_X(p,q) = \inf_\gamma \ell(\gamma),
\]
where the infimum is taken over admissible paths $\gamma \colon [0,1]\to X$ from $p$ to $q$. In what follows, we denote, for $A\subset X$ and $r>0$, 
\[
  B_X(A, r) = \{ x \in X \colon {\dist}_X(x,A)<r\}
\]
the \emph{$r$-neighborhood of $A$ in $X$}.

As already mentioned in the introduction, we mainly consider bubble trees over nodal manifolds. More precisely, a bubble $n$-tree $\widehat X$ is a nodal $n$-manifold for which there exists a nodal $n$-submanifold $X \subset \widehat X$ having the property that strata of $\widehat X$ which are not contained in $X$, that is, elements of $\Strata_X(\widehat X)$, are $n$-spheres. We call a stratum $S$ in $\Strata_X(\widehat X)$ a \emph{bubble}.
This notation reflects the property that, in what follows, we equip each bubble with a Riemannian metric making it isometric to a copy of $\bS^n_p := \bS^n \times \{p\}$ of the standard Euclidean unit sphere $\bS^n$ attached to $X$ by identifying $p$ and the south pole $-e_{n+1}$ of $\bS^n$. In what follows, we also call the ball $\bS^n_{p,+} = \{ (x_1,\ldots, x_{n+1},p) \in \bS^n\times \{p\} \colon x_{n+1}\ge 0\} \subset \bS^n_p$, the \emph{upper hemisphere $\bS^n_{p,+}$ of a bubble $\bS^n_p$}.

\subsection{Quasiregular curves from nodal manifolds}

As in the introduction, we define differential forms on nodal manifolds stratum-wise. More formally, we call an indexed family $\omega = (\omega_M)_{M\in \Strata(X)}$ a \emph{differential $n$-form on $X$} if each $\omega_M$ is a differential $n$-form $M$. Note that, at a nodal point $p\in \Sing(X)$, the tangent spaces $T_p M$, for strata $M$ containing $p$, are distinct and therefore the covectors $(\omega_M)_p \colon T_p M \times \cdots \times T_p M \to \R$ are formally distinct. 
In what follows, we denote the form $\omega_M$ in the family $\omega$ simply as $\omega|_M$.

As in the introduction, we also say that a continuous mapping $F\colon X\to N$ from a connected, oriented, and Riemannian nodal $n$-manifold to a Riemannian $m$-manifold, for $2\le n \le m$, is a \emph{(nodal) $K$-quasiregular $\omega$-curve for $K\ge 1$ and $\omega \in \Cal^n_+(N)$} if, for each stratum $M\in \Strata(X)$, the mapping $F|_M \colon M \to N$ is a $K$-quasiregular $\omega$-curve.

Following the convention above, we denote $F^*\omega = ((F|_M)^*\omega)_{M\in \Strata(X)}$ and use the notation
\[
\int_X F^*\omega = \sum_{M\in \Strata(X)} \int_M (F|_M)^*\omega.
\]
Note that the sum is defined, since the functions $\star (F|_M)^*\omega$, and hence also their integrals, are non-negative. We extend this notational convention to all continuous $W^{1,n}_\loc$-mappings $f \colon X\to N$ for which the integrals $\int_M (f|_M)^*\omega$, $M\in \Strata(X)$, are non-negative.

\section{Asymptotically quasiregular sequences}

Later when we extend sequences of quasiregular curves to a bubble tree by modifying them in the neighborhood of a singular set, the maps in the resulting sequence will unfortunately, in general, no longer satisfy the distortion inequality in those neighborhoods. However with this operation, called nodal surgery (defined in Section~\ref{sec:nodal-surgery}) we may choose the neighborhoods of non-quasiregularity in such a way that they shrink to the singular set in the limit. In this sense this new sequence from the bubble tree is asymptotically quasiregular. We formalize this idea in Definition~\ref{def:quasiregular-exhaustion} and Definition~\ref{def:asymptotically-quasiregular}.
For these reasons in what follows we work with these intermediate classes of objects instead of sequences of quasiregular curves.

\begin{definition}
\label{def:quasiregular-exhaustion}
Let $X$ be an oriented and Riemannian nodal $n$-manifold and let $(N,\omega)$ be an $n$-calibrated Riemannian $m$-manifold for $2 \le n \le m$. A sequence $(F_k \colon X\to N)_{k\in \N}$ of continuous $W^{1,n}_\loc$-mappings admits a \emph{$(K,\omega)$-quasiregular exhaustion about a discrete set $Q \subset X$} if there exists a sequence $\varepsilon_k \downarrow 0$ having the following properties: for each neighborhood $U$ of $Q$, there exist $k_0\in \N$ and a neighborhood $V \subset \overline{V} \subset U$ of $Q$ for which $F_k|_{X\setminus \overline{V}} \colon X\setminus \overline{V}\to N$ is a $(1+\varepsilon_k)K$-quasiregular $\omega$-curve for $k\ge k_0$.
\end{definition}

\begin{definition}
\label{def:asymptotically-quasiregular}
Let $X$ be an oriented and Riemannian nodal $n$-manifold and let $(N,\omega)$ be an \(n\)-calibrated Riemannian $m$-manifold for $2 \le n \le m$. A sequence $(F_k \colon X\to N)_{k\in \N}$ of continuous $W^{1,n}_\loc$-mappings is \emph{asymptotically $(K,\omega)$-quasiregular} for $K\ge 1$ if there exists mutually disjoint discrete subsets $Q$ and $P$ of $X$ having the following properties: 
\begin{enumerate}
\item the sequence $(F_k)_{k\in \N}$ admits a $(K,\omega)$-quasiregular exhaustion about $Q$; and  \label{item:asymp-quasi-1}
\item the sequence $(F_k|_{X\setminus P} \colon X\setminus P \to N)_{k\in \N}$ converges locally uniformly.
\label{item:asymp-quasi-2}

\end{enumerate} 
\end{definition}

In what follows, we restrict ourselves to asymptotically quasiregular sequences $(F_k \colon X\to N)_{k\in \N}$ for which $Q$ and $P$ are subsets of $\Sing(X)$ and $X\setminus \Sing(X)$, respectively.

%%%%%%%%%%%%%%%%%%%%%%%%%%%%%%%%%%%%%%%%%%%%%%%%%%%%%%%%%%%%%
%%%%%%%%%%%%%%%%%%%%%%%%%%%%%%%%%%%%%%%%%%%%%%%%%%%%%%%%%%%%%
%%%%%%%%%%%%%%%%%%%%%%%%%%%%%%%%%%%%%%%%%%%%%%%%%%%%%%%%%%%%%

Since locally uniform limits of \(K\)-quasiregular \(\omega\)-curves are $K$-quasiregular $\omega$-curves, asymptotically quasiregular sequences converge locally uniformly, outside a discrete set $P$ as in Definition \ref{def:asymptotically-quasiregular}, to quasiregular curves. We record this as follows; see also \cite[Theorem~1.9 and Lemma~4.4]{pankka2020qrcs}.

Additionally for the proof, we say that a sequence $(F_k \colon X\to N)_{k\in \N}$ is \emph{quasiregular at a point $p\in X$} if there exists $k_0\in \N$ and a neighborhood $V\subset X$ of $p$ for which the restriction $F_k|_V \colon V \to N$ is a quasiregular curve for $k\ge k_0$.

\begin{lemma}\label{lem:loc-unif-limit-energy-bound}
Let $2\le n \le m$, let $X$ be an oriented and Riemannian nodal $n$-manifold, and let $(N,\omega)$ be an \(n\)-calibrated Riemannian $m$-manifold.
Let $(F_k \colon X \to N)_{k\in \N}$ be an asymptotically $(K,\omega)$-quasiregular sequence with \((F_k|_{X \setminus P})_k\) converging locally uniformly where \(P \subset X\) is a discrete set.
Then the sequence $(F_k|_{X\setminus P} \colon X\setminus P \to N)_{k\in \N}$ converges locally uniformly to a $K$-quasiregular $\omega$-curve $F \colon X \setminus P \to N$ which satisfies
  \begin{equation}\label{eq:7}
    \int_E\norm{DF}^n \leq \liminf_{k\to\infty}\int_E \norm{DF_k}^n
  \end{equation}
  for each compact subset $E \subset X$.
\end{lemma}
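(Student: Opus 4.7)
The plan is to exploit separately the two defining properties of an asymptotically $(K,\omega)$-quasiregular sequence. Away from the discrete set $Q$, the maps $F_k$ are eventually $(1+\varepsilon_k)K$-quasiregular $\omega$-curves; away from the discrete set $P$, the sequence converges locally uniformly. On the overlap $X\setminus(P\cup Q)$, the closure theorem for quasiregular $\omega$-curves applies, and removability of point singularities then handles the leftover exceptional set $Q\setminus P$.

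\emph{Identification of the limit.} Fix $x\in X\setminus(P\cup Q)$ and pick a neighborhood $W$ of $x$ whose closure is disjoint from both $P$ and $Q$. Choosing an open neighborhood $U$ of $Q$ with $\overline U$ disjoint from $\overline W$, Definition~\ref{def:quasiregular-exhaustion} furnishes $k_0\in\N$ and a set $V$ with $\overline V\subset U$ such that for every $k\ge k_0$ the restriction $F_k|_{X\setminus\overline V}$ is a $(1+\varepsilon_k)K$-quasiregular $\omega$-curve; in particular $F_k|_W$ is $(1+\varepsilon_k)K$-quasiregular for large $k$, and $F_k|_W\to F|_W$ uniformly. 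The closure property for quasiregular $\omega$-curves under local uniform convergence, with distortion constants tending to $K$, as recorded in \cite[Theorem~1.9 and Lemma~4.4]{pankka2020qrcs}, shows $F|_W$ is $K$-quasiregular. Covering yields $K$-quasiregularity of $F$ on $X\setminus(P\cup Q)$. Each $q\in Q\setminus P$ is then handled by Theorem~\ref{thm:cont-removability}: $F$ is continuous at $q$ as a locally uniform limit of continuous maps on a neighborhood of $q$ and is $K$-quasiregular on the punctured neighborhood, so the continuous extension is $K$-quasiregular, giving $K$-quasiregularity on all of $X\setminus P$.

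\emph{Lower semicontinuity of the $n$-energy.} It suffices to prove \eqref{eq:7} under the assumption $M:=\liminf_k\int_E\norm{DF_k}^n<\infty$; after passing to a subsequence realising this liminf, $(F_k)$ is uniformly bounded in $W^{1,n}$ on $E$. For any compact $E'\subset E\setminus(P\cup Q)$, the exhaustion property and local uniform convergence together imply that for large $k$ the $F_k$ are quasiregular $\omega$-curves on a neighborhood of $E'$ and converge uniformly to $F$ there. The uniform $L^\infty$-convergence combined with the uniform $L^n$-bound on derivatives yields $F_k\weakto F$ in $W^{1,n}(E')$, and weak lower semicontinuity of the $L^n$-norm gives $\int_{E'}\norm{DF}^n\le M$. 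Since $(P\cup Q)\cap E$ is finite and of measure zero, exhausting $E\setminus(P\cup Q)$ by such compact $E'$ and applying monotone convergence produces \eqref{eq:7}.

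The main obstacle I anticipate is the clean extraction of the weak $W^{1,n}$-convergence in the last step; once this is in hand, the remainder of the proof reduces to invoking the previously established closure and removability results together with standard weak lower semicontinuity of the $L^n$-norm.
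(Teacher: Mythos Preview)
Your identification of the limit as a $K$-quasiregular $\omega$-curve on $X\setminus P$ follows exactly the paper's route: closure under local uniform convergence on $X\setminus(P\cup Q)$ via \cite[Theorem~1.9]{pankka2020qrcs}, then continuous removability (Theorem~\ref{thm:cont-removability}) across $Q\setminus P$.

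For the energy inequality~\eqref{eq:7} your argument diverges from the paper's. The paper proceeds by a Vitali covering of $E$ (inside a single stratum) by balls small enough to sit in $(1+\epsilon)$-bilipschitz charts on both sides, transfers to the Euclidean picture, and invokes the quasiregular-curve-specific lower-semicontinuity estimate \cite[Lemma~4.4]{pankka2020qrcs} on each ball, summing via Fatou. You instead pass to a subsequence realising the $\liminf$, use the uniform $L^n$-bound on the differentials together with the strong $L^n$-convergence $F_k\to F$ (after Nash embedding) to obtain weak $W^{1,n}$-convergence on any compact $E'\subset E\setminus(P\cup Q)$, and then appeal to the weak lower semicontinuity of the convex functional $g\mapsto\int\|g\|_{\mathrm{op}}^n$. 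Your route is the standard functional-analytic one and avoids both the Vitali covering and the dependence on \cite[Lemma~4.4]{pankka2020qrcs}; the paper's route, by contrast, keeps everything inside the quasiregular-curves toolbox and never needs to identify a weak Sobolev limit for manifold-valued maps. Both are valid; to make yours airtight you should localise to a single stratum (adding $\Sing(X)$ to the exceptional finite set), pass through a Nash embedding to make sense of weak $W^{1,n}$-convergence, and note that the isometry of the embedding preserves $\|DF\|$ so that weak lower semicontinuity of the operator-norm energy indeed transfers back.
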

\begin{proof}
Let \(Q \subset X\setminus P\) be the discrete set about which \((F_k)_k\) admits a \((K,\omega)\)-quasiregular exhaustion. Set \(\widetilde P = Q \cup P\).
By definition, the sequence \( (F_k|_{X \setminus P} \colon X \setminus P \to N )_{k\in\N}\) is quasiregular at each point in $X\setminus Q$ and converges locally uniformly to a continuous map
\(F \colon X \setminus P \to N\).
 Observe that the same holds when \(P\) is replaced by \(\widetilde P\).
By \cite[Theorem~1.9]{pankka2020qrcs}, \(F|_{X \setminus \widetilde P}\) is a \(K\)-quasiregular \(\omega\)-curve, which -- by Theorem~\ref{thm:cont-removability} -- extends over the discrete set \(Q\) to a \(K\)-quasiregular \(\omega\)-curve \(F\colon X \setminus P \to N\).

Let \(E \subset X\) be a compact set. First consider the special case that \(E\) is contained in a single stratum \(M \in \Strata(X)\). 
We split this case into two cases: (i) \(E \cap \widetilde P = \emptyset\), and (ii) \(E \cap \widetilde P \neq \emptyset\).
The general case follows from the observation that $E\setminus \Sing(X)$ meets only finitely many strata.

Case (i): Since \(E \cap \widetilde P = \emptyset\), there exists a neighborhood \(U\) of \(\widetilde P\) for which \(E \cap U = \emptyset\). Then there exists \(k_0 \in \N\) and a neighborhood \(V \subset \overline V \subset U\) of \(\widetilde P\) such that for all \(k \geq k_0\), \(F_k|_{M\setminus \overline V}\) is a \((1+\epsilon_k) K\)-quasiregular \(\omega\)-curve.
Let \(\epsilon > 0\) be small enough that \(B_M(E,\epsilon) \subset M \setminus \overline V\).
Let \(\cU\) be the collection of closed balls \(\ol{B}_M(p,r)\) with \(p \in E\) and \(2r \leq \epsilon\), for which \(B(p,2r)\) is contained in a \((1+\epsilon)\)-bilipschitz chart at \(p\) and \(F(B_M(p,2r))\) is contained in a \((1+\epsilon)\)-bilipschitz chart at \(F(p)\). 

Since Riemannian manifolds are Vitali spaces \cite[Theorem~3.4.3]{HKSTsobolevSpaces}, \(\cU\) contains a countable and pairwise disjoint subfamily \(\cU_0 \subset \cU\) for which \(\vol_M\left(E\setminus \bigcup_{B \in \cU_0}B\right) = 0\).

For each \(B \in \cU_0\), let \(\phi_B\) and \(\psi_B\) be \((1+\epsilon)\)-bilipschitz charts containing \(B\) and \(F(B)\cup \bigcup_{k \ge k_0} F_k(B)\), respectively. Thus the sequence \((\psi_B\circ F_k \circ \phi_B^{-1})_{k\ge k_0}\) converges locally uniformly to $\psi_B\circ F \circ \phi_B^{-1}$. By \cite[Theorem~1.9]{pankka2020qrcs}, $\psi_B\circ F \circ \phi_B^{-1}$ is a $(1+\varepsilon)^{4n}K$-quasiregular $\omega$-curve. 
  Furthermore, by a change of variables, \cite[Lemma~4.4]{pankka2020qrcs}, and Fatou's lemma,
\begin{align*}
\int_{E}\norm{DF}^n
&= \sum_{B \in \cU_0}\int_B\norm{DF}^n \leq \sum_{B \in \cU_0}(1+\epsilon)^{3n}\int_{\phi_B(B)}\norm{D(\psi_B \circ F \circ \phi_B^{-1})}^n \\
&\leq (1+\epsilon)^{3n}\sum_{B \in \cU_0} \liminf_{k\to\infty}\int_{\phi_B(B)}\norm{D(\psi_B \circ F_k \circ \phi_B^{-1})}^n \\
&\leq (1+\epsilon)^{6n}\sum_{B \in \cU_0} \liminf_{k\to\infty}\int_{B}\norm{D F_k}^n 
\leq (1+\epsilon)^{6n} \liminf_{k\to\infty}\int_{E}\norm{D F_k}^n.
\end{align*}
Case (i) now follows.

For case (ii), i.e.~\(E \cap \widetilde P \neq \emptyset\), note that since \(\widetilde P\) is discrete, it suffices to consider the further special case: \(E = \bar B(p,R)\) and \(E \cap \widetilde P = \sset{p}\) . Consider the closed annuli \(E_r = E \setminus B(p,r)\) for \(0 < r < R\). Observe that each \(E_r\) is compact and does not meet \(\widetilde P\), so case (i) implies that
\[ \int_{E_r}\norm{DF}^n \leq \liminf_{k\to\infty}\int_{E_r}\norm{D F_k}^n \leq \liminf_{k\to\infty}\int_{E\setminus \{p\}}\norm{D F_k}^n = \liminf_{k\to\infty}\int_{E}\norm{D F_k}^n\]
for all \(0<r<R\). This case follows from observing that 
\[
\int_{E_r}\norm{DF}^n \to \int_{E}\norm{DF}^n
\] 
as \(r \to 0\). The claim now follows.
\end{proof}

\begin{remark}
According to classical terminology, Lemma \ref{lem:loc-unif-limit-energy-bound} yields that the sequence $(F_k \colon X\to Y)_{k\in \N}$ is \emph{quasinormal}. Recall that a family $\cF$ of maps $X\to Y$ is \emph{quasinormal} if every sequence $(f_k \colon X\to Y)$ in $\cF$ has a subsequence $(f_{k_j})$ which converges locally uniformly in $X\setminus P$, where $P$ is a discrete set. See e.g.~Nevo, Pang, and Zalcman \cite{Pang-Nevo-Zalcman} for discussion on quasinormality for meromorphic functions.
\end{remark}

We finish this section by recording an Arzel\`a--Ascoli type normality criterion for sequences admitting a quasiregular exhaustion. Recall that, by Corollary \ref{cor:local-holder-cont}, a family $\{F_k \colon X\to N\}_{k\in \N}$ of $K$-quasiregular $\omega$-curves, having small locally equibounded energy, is locally uniformly $\alpha$-H\"older continuous, with $\alpha$ depending only on the data. In particular, such a family is locally equicontinuous, and a fortiori normal by a standard Arzel\`a--Ascoli argument. 

In the forthcoming sections, we construct locally equicontinuous families of Sobolev mappings $\{F_k \colon X\to N\}_{k\in \N}$ admitting a quasiregular exhaustion about a discrete set. For this reason, we immediately state here a version of the Arzel\`a--Ascoli theorem specialized to this setting.

\begin{proposition}
\label{prop:norm-fams-hold-qrcs}
Let $2\le n \le m$, let $X$ be an oriented and Riemannian nodal $n$-manifold, and let $(N,\omega)$ be a complete \(n\)-calibrated Riemannian $m$-manifold.  
Let $(F_k \colon X\to N)_{k\in \N}$ be a sequence of continuous $W^{1,n}_\loc$-Sobolev mappings admitting a $(K,\omega)$-quasiregular exhaustion about a discrete set $Q\subset X$ 
and let \(P \subset X \setminus Q\) be another discrete set.
Then \((F_k)_{k\in\N}\) satisfies the following two conditions:
\begin{enumerate}
\item the family $\{F_k \}_{k\in \N}$ is locally equicontinuous in \(X \setminus P\); and \label{item:N1}
\item in each component \(C\) of \(X \setminus P\) there exists a point \(x_C \in C\) for which the orbit $\{F_k(x_C) \colon k\in \N\}$ has compact closure in $N$, \label{item:N2}
\end{enumerate}
if and only if there exists an asymptotically $(K,\omega)$-quasiregular subsequence \((F_{k_j})_{j \in \N}\) of \((F_k)_{k\in \N}\) converging locally uniformly on \(X \setminus P\). 
\end{proposition}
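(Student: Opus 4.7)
The plan is to prove the two directions separately, combining an Arzel\`a--Ascoli type extraction (for the forward direction) with standard facts about locally uniform convergence (for the reverse direction).

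For the forward direction, I would assume $(F_k)_{k\in\N}$ satisfies (N1) and (N2) and aim to extract an asymptotically $(K,\omega)$-quasiregular subsequence converging locally uniformly on $X \setminus P$. The first step is to upgrade the pointwise orbit compactness at the distinguished points $x_C$ to equiboundedness of the family on compact subsets of $X \setminus P$. This is done by a chaining argument: given a compact $E \subset X \setminus P$, cover $E$ by finitely many open sets on which (N1) provides a common modulus of continuity; for each component $C$ of $X \setminus P$ meeting $E$, further cover a compact path in $C$ connecting $E \cap C$ to $x_C$ by finitely many such equicontinuity neighborhoods. Combining the moduli along the resulting finite chain yields a uniform bound
\[
\sup_{k \in \N,\ y \in E \cap C} d_N(F_k(y), F_k(x_C)) \le L_{E,C} < \infty.
\]
Combined with (N2) and the Hopf--Rinow theorem (completeness of $N$ ensures that closed bounded sets are compact), this gives compact closure of $\{F_k(y) : k\in \N,\ y \in E\}$ in $N$.

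The second step is the standard Arzel\`a--Ascoli diagonal extraction. Since $X$ is second countable and $P$ is discrete, $X \setminus P$ is a locally compact, second countable metric space; exhausting by an increasing sequence of compact subsets and diagonalizing produces a subsequence $(F_{k_j})_{j\in\N}$ converging locally uniformly on $X \setminus P$. Any subsequence of $(F_k)_{k\in\N}$ automatically inherits the $(K,\omega)$-quasiregular exhaustion about $Q$ (taking the subsequence $\varepsilon_{k_j} \downarrow 0$), so by Definition~\ref{def:asymptotically-quasiregular} the extracted subsequence $(F_{k_j})_{j\in\N}$ is asymptotically $(K,\omega)$-quasiregular.

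For the reverse direction, suppose $(F_{k_j})_{j\in\N}$ is an asymptotically $(K,\omega)$-quasiregular subsequence of $(F_k)_{k \in \N}$ converging locally uniformly on $X \setminus P$ to a map $F$. Then $F$ is continuous on $X \setminus P$, and the standard fact that a locally uniformly convergent sequence of continuous maps with continuous limit is locally equicontinuous yields condition (N1) for the subsequence. For (N2), the sequence $(F_{k_j}(x_C))_{j \in \N}$ converges to $F(x_C) \in N$, so has compact closure. The main technical point is the chaining argument in the forward direction, which must be carried out with respect to the length metric $d_X$ of the nodal manifold and may pass through nodal singularities in $X \setminus (P \cup Q)$; since (N1) provides equicontinuity across these singular points and the components of $X \setminus P$ are path connected via admissible paths, a finite-cover argument on a compact path $K_C \subset C$ from $x_C$ to $E \cap C$ suffices. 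I expect no substantive new ideas beyond the classical Arzel\`a--Ascoli template, though some care will be needed in the nodal bookkeeping.
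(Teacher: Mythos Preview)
Your approach is essentially identical to the paper's: for the forward direction, chain equicontinuity neighborhoods from the distinguished points $x_C$ (using Hopf--Rinow and completeness of $N$ to get compact orbits) and extract via Arzel\`a--Ascoli with a diagonal argument; for the reverse direction, invoke the converse of Arzel\`a--Ascoli on sets where the subsequence converges uniformly. Your remark that the reverse direction only yields (N1) and (N2) for the subsequence $(F_{k_j})$ is in fact more careful than the paper, which writes $(F_k|_U)_{k\in\N}$ where $(F_{k_j}|_U)_{j\in\N}$ is what the Arzel\`a--Ascoli argument actually gives.
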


\begin{proof}
We begin by assuming that \((F_k)_{k\in\N}\) satisfies \eqref{item:N1} and  \eqref{item:N2}.
Assumption \eqref{item:N1} implies that there exists a covering $\cU$ of $X\setminus P$ for which \((F_k|_U)_k\) is equicontinuous on each element $U\in \cU$. By passing to components of elements of $\cU$, we may assume that elements of $\cU$ are connected.

By assumption \eqref{item:N2}, we may fix for each component $C$ of $X\setminus P$, an element $U_C$ of $\cU$ containing $x_C$. 
As \(N\) is complete, the equicontinuity of \((F_k|_{U_C})_k\) and Hopf--Rinow theorem give that, for each \(x \in U_C\), the orbit \(\set{F_{k}(x) \mid k \in \N}\) has compact closure in \(N\).
Thus, by the Arzel\`a--Ascoli theorem, there is a subsequence \((F_{k_j}|_{U_C})_j\) of \((F_{k}|_{U_C})_k\) which converges locally uniformly on \(U_C\). 

Since we may connect an element $U$ of $\cU$, contained in a component, say $C$, of $X\setminus P$ to an element $U_C$ by a finite chain of elements in $\cU$ contained in $C$, a standard diagonal argument yields a locally uniformly converging subsequence, also denoted $(F_{k_j})_{j\in \N}$ of $(F_k)_{k\in \N}$.
Observe that \((F_{k_j})_{k_j}\) still admits a \((K,\omega)\)-quasiregular exhaustion about \(Q\).
Therefore \((F_{k_j})_j\) is an asymptotically \((K, \omega)\)-quasiregular subsequence of \((F_k)_k\).

Now, for the converse direction, assume that \((F_k)_{k\in\N}\) has an asymptotically $(K,\omega)$-quasiregular subsequence \((F_{k_j})_{j \in \N}\) which converges locally uniformly on \(X \setminus P\). Then each component \(C\) of \(X \setminus P\) has an open cover \(\cU_C\) with the property that \((F_{k_j}|_U)_{j\in\N}\) converges locally uniformly on each \(U \in \cU_C\). 
Take \(U \in \cU_C\).
Thus by the Arzel\`a--Ascoli theorem, \((F_k|_U)_{k\in\N}\) is equicontinuous and there exists \(x_U \in U\) for which the orbit $\{F_k(x_U) \colon k\in \N\}$ has compact closure in $N$. Therefore \((F_k)_{k\in\N}\) satisfies \eqref{item:N1} and  \eqref{item:N2}, which concludes the proof.
\end{proof}

%%%%%%%%%%%%%%%%%%%%%%%%%%%%%%%%%%%%%%%%%%%%%%%%
%%%%%%%%%%%%%%%%%%%%%%%%%%%%%%%%%%%%%%%%%%%%%%%%
%%%%%%%%%%%%%%%%%%%%%%%%%%%%%%%%%%%%%%%%%%%%%%%%

\section{Energy Gap}
\label{sec:energy-gap}

Similar to the other classes of mappings for which a version of Gromov's compactness theorem holds, quasiregular curves $M\to N$ from closed manifolds into manifolds with bounded geometry, have an energy gap, that is, a lower bound for the $L^n$-energy. 
For quasiregular curves, this follows from the H\"older continuity of the map, an injectivity radius lower bound for $N$, and de Rham's theorem for integrals of $n$-forms on $n$-manifolds. We begin with an adaptation of this last fact to our setting.

\begin{proposition}
\label{prop:trival-qrc-constant}
Let $2\le n \le m$, let $M$ be a closed, connected, and oriented, Riemannian $n$-manifold, and let $(N,\omega)$ an $n$-calibrated Riemannian $m$-manifold. Let $F\colon M \to N$ be a $K$-quasiregular $\omega$-curve. Then $F$ is constant if  $FM\subset U$ where $U\subset N$ is an open set for which $0 =[\omega|_U] \in H^n(N)$. 
\end{proposition}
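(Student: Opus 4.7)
The plan is to combine exactness of the calibration on $U$ with Stokes' theorem and the distortion inequality to force $\star F^*\omega = 0$ almost everywhere, and hence conclude constancy from $DF = 0$.

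First, I would unpack the cohomological hypothesis. Since $[\omega|_U] = 0$ in $H^n(U)$, there exists $\eta \in \Omega^{n-1}(U)$ with $d\eta = \omega|_U$. As $M$ is closed, $F(M) \subset U$ is compact; by non-vanishing of $\omega$, the function $\norm{\omega}_\comass \circ F$ is bounded below by some $c > 0$ on $M$. The distortion inequality \eqref{eq:QRC} then gives both $\star F^*\omega \ge 0$ almost everywhere in $M$ and $\int_M \norm{DF}^n \le (K/c)\int_M \star F^*\omega < \infty$, the latter by compactness and local integrability of the quasiregular curve.

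Next, I would apply Stokes' theorem. Viewing $F$ as a continuous $W^{1,n}$-map into $U$, the pull-back $F^*\eta$ is a Sobolev $(n-1)$-form on $M$ whose weak exterior derivative equals $F^*\omega$ almost everywhere (this is the standard commutation $d\circ F^* = F^* \circ d$ for Sobolev pull-backs, valid a.e.~because $F$ is a.e.~differentiable in the Sobolev sense). Since $M$ is closed, Stokes' theorem for Sobolev forms yields
\[
\int_M \star F^*\omega \; \vol_M \;=\; \int_M F^*\omega \;=\; \int_M d(F^*\eta) \;=\; 0.
\]
Combined with $\star F^*\omega \ge 0$ a.e., this forces $\star F^*\omega = 0$ almost everywhere. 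Returning to the distortion inequality, we conclude $\norm{DF}^n = 0$ a.e., so $DF = 0$ a.e. Since $F$ is continuous and $M$ is connected, $F$ is constant.

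The only delicate point is the Sobolev Stokes step: justifying $\int_M d(F^*\eta) = 0$ for the $W^{1,n/(n-1)}$ form $F^*\eta$ on the closed manifold $M$. The cleanest route is to smoothly approximate $F$ by $C^\infty$-maps into $U$ in $W^{1,n}$, which is legitimate here because $F(M) \subset U$ is compactly contained in the open set $U$, so sufficiently close approximations still land in $U$; apply classical Stokes to each smooth approximation, and pass to the limit using $L^1$-convergence of $F_k^*\omega \to F^*\omega$ (which follows from $W^{1,n}$-convergence). Alternatively, one can argue directly that a Sobolev $(n-1)$-form with $L^1$ weak exterior derivative integrates to zero on a closed oriented $n$-manifold. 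Everything else is straightforward bookkeeping.
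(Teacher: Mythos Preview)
Your proof is correct and follows essentially the same route as the paper: exactness of $\omega|_U$, Stokes on the closed manifold $M$ to get $\int_M F^*\omega = 0$, then the distortion inequality forces $DF=0$ a.e.\ and connectedness gives constancy. You are in fact slightly more careful than the paper, which writes the Stokes computation without comment and omits the lower bound $c$ on $\norm{\omega}_\comass \circ F$; your discussion of the Sobolev Stokes step via smooth approximation is a legitimate (and arguably necessary) elaboration.
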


\begin{proof}
Since \([\omega|_U] = 0\), the form \(\omega|_U\) is exact and there exists \(\tau \in \Omega^{n-1}(U)\) such that \(\dn\tau = \omega|_{ _U}\). Since $M$ is closed, we have, by integrating the distortion inequality and using Stokes' theorem, that
\begin{align*}
\int_{M}\norm{DF}^n &\leq K\int_{M}F^{ * }\omega =  K\int_{M}F^{ * } d\tau = K\int_{M} d F^{ * }\tau = 0.
\end{align*}
Since \(\norm{\omega}\circ F\) is non-negative, we have that \(\norm{DF} = 0\) almost everywhere. Thus by continuity \(F\) is locally constant. Since \(M\) is connected, we obtain that \(F\) is constant.
\end{proof}

For targets having bounded geometry, Proposition \ref{prop:trival-qrc-constant} yields an energy lower bound for non-constant curves; see e.g.~Parker \cite[Proposition 1.1]{Parker-JDG-1996} for an analogous statement in the context of harmonic maps. In what follows, we apply this energy gap to spherical strata of nodal manifolds.

\begin{theorem}[Energy gap]
\label{thm:analytic-energy-gap}
Let $2\le n \le m$, let $M$ be a closed, connected and  oriented Riemannian $n$-manifold, let $(N,\omega)$ be an $n$-calibrated Riemannian $m$-manifold having bounded geometry, let \(E_N > 0\) be a small energy bound for \(N\), and $K\ge 1$.  Then there exists $\varepsilon_0=\epsilon_0(N,M,K, E_N,\omega) > 0$ having the following property: Each non-constant $K$-quasiregular $\omega$-curve $F\colon M \to N$ satisfies
\[
\norm{DF}_{L^n(M)} \ge \varepsilon_0.
\]
\end{theorem}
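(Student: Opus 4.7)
The plan is to argue by contradiction: assume there exists a sequence $(F_k \colon M \to N)_{k \in \N}$ of non-constant $K$-quasiregular $\omega$-curves with $\norm{DF_k}_{L^n(M)} \to 0$, and show that $F_k$ must eventually be constant, yielding the desired contradiction. The conceptual content is transparent: small energy forces small image via H\"older continuity, and a small image lies in a cohomologically trivial neighborhood in $N$ where Proposition~\ref{prop:trival-qrc-constant} applies.

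First, I would exploit the compactness of $M$ together with Corollary~\ref{cor:local-holder-cont}. Since $M$ is smooth and compact, the quantity $r_M := \inf_{p \in M}\BLrad_1(M,p)$ is positive; fix $0 < r < r_M$ and cover $M$ by finitely many balls $B_M(p_i, r/2)$, $i=1,\ldots,L$. For $k$ large enough that $\norm{DF_k}_{L^n(M)} \leq E_N$, Corollary~\ref{cor:local-holder-cont} gives
\[
\bigl|F_k|_{B_M(p_i, r/2)}\bigr|_{0,\alpha/4} \;\leq\; C\,\norm{DF_k}_{L^n(B_M(p_i, r))} \;\leq\; C\,\norm{DF_k}_{L^n(M)},
\]
with $\alpha\in(0,1]$ and $C\geq 1$ depending only on $n$, $K$, $N$, $\omega$. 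Hence $\diam F_k(B_M(p_i, r/2)) \leq C\,r^{\alpha/4}\norm{DF_k}_{L^n(M)}$ for each $i$.

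Second, since $M$ is connected, any two points can be joined by a path which is covered by a chain of at most $L$ balls from the cover. Passing through shared points of consecutive balls and summing diameters yields
\[
\diam F_k(M) \;\leq\; L\, C\, r^{\alpha/4}\,\norm{DF_k}_{L^n(M)} \;\longrightarrow\; 0 \qquad \text{as } k \to \infty.
\]
Bounded geometry of $N$ provides a uniform $\rho>0$ with $\rho \leq \inf_{q\in N}\BLrad_1(N,q)$, so every metric ball $B_N(q,\rho)$ is bilipschitz to a Euclidean ball and is in particular contractible, giving $H^n(B_N(q,\rho);\R)=0$ and hence $[\omega|_{B_N(q,\rho)}]=0$.

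Finally, once $k$ is large enough that $\diam F_k(M) < \rho$, set $q_k := F_k(p_1)$, so that $F_k(M) \subset U_k := B_N(q_k,\rho)$ with $[\omega|_{U_k}] = 0$. Proposition~\ref{prop:trival-qrc-constant}, applied to $F_k \colon M \to N$ with $U = U_k$, then forces $F_k$ to be constant for all sufficiently large $k$, contradicting non-constancy. The main obstacle is really just bookkeeping: making the choices of $r$, $L$, and $\rho$ uniform enough that the local H\"older estimate collapses to a global diameter bound which in turn fits inside a contractible neighborhood in $N$; the dependence $\varepsilon_0 = \varepsilon_0(N, M, K, E_N, \omega)$ claimed in the statement then emerges from these choices.
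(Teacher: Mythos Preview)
Your proof is correct and follows essentially the same approach as the paper: use compactness of $M$ to cover by finitely many balls within the bilipschitz injectivity radius, apply the local H\"older estimate (Corollary~\ref{cor:local-holder-cont}) on each ball to bound $\diam F(M)$ by a constant times $\norm{DF}_{L^n(M)}$, and then invoke Proposition~\ref{prop:trival-qrc-constant} once the image lies in a contractible ball of $N$. The only cosmetic differences are that the paper argues directly with an explicit $\varepsilon_0$ rather than by contradiction, and covers a diameter-realizing geodesic rather than all of $M$; neither changes the substance.
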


\begin{proof}
Since $M$ is closed, we have that
\[
r_M = \inf_{p\in M} \BLrad_1(M,p)
\]
is finite and positive. We now denote $L_M = \lfloor 2 (\diam M)/r_M \rfloor +2$, where $\lfloor a \rfloor$ is the integer part of $a\in \R$.

Set 
\[
\varepsilon_0 = \frac{1}{3}\min\left\{ \frac{r_N}{C(n,\alpha) r_M^\alpha L_M }, E_N \right\}, 
\]
where $r_N = \inf_{q\in N} \BLrad_1(N,q)$ and $C(n,\alpha)$ is the constant from Corollary~\ref{cor:local-holder-cont}.

Let $F\colon M\to N$ be a $K$-quasiregular $\omega$-curve
having energy $\norm{DF}_{L^n} < \varepsilon_0$. We show that $F$ is a constant map.
By taking a geodesic between points of $M$ which realizes the diameter of $M$ and covering it with at most $L_M$ balls $B_1,\ldots, B_m$ of radius $r_M/2$ and that $\norm{DF|_{B_j}}_{L^n} < \epsilon_0 < E_N$ for each $j=1,\ldots, m$, we have, by Corollary \ref{cor:local-holder-cont}, that 
\[
\diam FM \le C(n,\alpha) \norm{DF}_{L^n} r_M^\alpha L_M < C(n,\alpha) r_M^\alpha L_M \varepsilon_0 
\le \frac{1}{3} r_N < r_N.
\]
Thus the image $FM$ of $F$ is contained in a topologically Euclidean ball $U=B_N(q,r_N)$ of $N$. Thus $0=[\omega|_U]\in H^n(U)$. Hence $F$ is constant by Proposition \ref{prop:trival-qrc-constant}.
\end{proof}

%%%%%%%%%%%%%%%%%%%%%%%%%%%%%%%%%%%%%%%%%%%%%%%%
%%%%%%%%%%%%%%%%%%%%%%%%%%%%%%%%%%%%%%%%%%%%%%%%
%%%%%%%%%%%%%%%%%%%%%%%%%%%%%%%%%%%%%%%%%%%%%%%%

\section{Pigeonhole principle for bubbling}

In this section, we record another key element used in the proofs of Gromov's compactness theorem: \emph{the energy of a locally equibounded asymptotically quasiregular sequence concentrates on a discrete set}. This discrete set, the 'bubbling points', will be the points at which we perform nodal surgery. We record this as follows. For the statement, we denote 
\[
i_X(p) = \# \{ M\in \Strata(X) \colon x\in M\}
\]
the number of strata of the nodal manifold $X$ containing the point $p\in X$.

\begin{proposition}
\label{prop:ident-bubbling-points}
Let $X$ be a connected, oriented, and Riemannian nodal $n$-manifold, and let $N$ be a Riemannian $m$-manifold for $2\le n \le m$. Let also $(F_k \colon X \to N)_{k\in \N}$ be a locally equibounded sequence of continuous $W^{1,n}_\loc$-mappings. Then, for a constant $E>0$, there exist a discrete set $P \subset X$ and a subsequence $(F_{k_j})_{j\in \N}$ of $(F_k)_{k\in \N}$ satisfying the following conditions:
\begin{enumerate}
\item For every $x\in X\setminus P$, there exists $0<r_x<\dist_X(x,P)$ for which
\[
\sup_{j\in \N}\int_{B(x,r_x)} \norm{DF_{k_j}}^n \le i_X(p)E.
\] \label{item:ident-1}
\item For every \(p \in P\) and \(0 < r_p < \dist(p,P\setminus\sset{p})\), 
\[
\liminf_{j\to \infty}\int_{B(p,r_p)} \norm{DF_{k_j}}^n \ge E.
\]\label{item:ident-2}
\end{enumerate}
Moreover, if $\sup_{k\in \N} \norm{DF_k}_{L^n(X)} < \infty$, then \(P\) is finite. 
\end{proposition}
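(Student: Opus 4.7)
The plan is to extract from the energy measures $\mu_k := \norm{DF_k}^n \vol_X$ a weak-$\star$ convergent subsequence with limit measure $\mu$, and to take $P$ to be the set of atoms of $\mu$ of mass at least $E$.

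First I would observe that the nodal manifold $X$ is a second-countable, locally compact, $\sigma$-compact Hausdorff space, and that by local equiboundedness $\sup_k \mu_k(K) < \infty$ for every compact $K \subset X$. Applying Theorem~\ref{thm:weak-compact-radon-measures} yields a subsequence $(\mu_{k_j})$ converging weak-$\star$ to a Radon measure $\mu$ on $X$. Set
\[
P := \set{ x \in X \mid \mu(\{x\}) \geq E }.
\]
Since any compact $K \subset X$ contains at most $\mu(K)/E$ atoms of $\mu$ of mass $\geq E$, the set $P$ is locally finite and therefore discrete; if $\sup_k \norm{DF_k}_{L^n(X)} < \infty$ then $\mu(X) < \infty$ and so $P$ is finite.

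Property~(2) is then immediate from the Portmanteau characterization of weak-$\star$ convergence on open sets: for $p \in P$ and $0 < r_p < \dist_X(p, P \setminus \sset{p})$ we have
\[
\liminf_{j \to \infty} \mu_{k_j}(B_X(p, r_p)) \geq \mu(B_X(p, r_p)) \geq \mu(\sset{p}) \geq E.
\]

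For Property~(1), fix $x \notin P$, so that $\mu(\sset{x}) < E$. Continuity of $\mu$ from above along the nested closed balls $\overline{B_X(x, 1/k)}$ gives a radius $r_0 < \dist_X(x, P)$ with $\mu(\overline{B_X(x, r_0)}) < E$, and upper semi-continuity of weak-$\star$ convergence on closed sets then yields an index $j_0$ with $\mu_{k_j}(\overline{B_X(x, r_0)}) < E$ for all $j \geq j_0$. The main obstacle is that Property~(1) requires control of $\sup_j$, not merely $\limsup_j$; I would resolve this by exploiting that each individual $\mu_{k_j}$, being absolutely continuous with respect to Riemannian volume on each stratum, puts no mass on the point $x$. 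Hence $\lim_{r \to 0} \mu_{k_j}(B_X(x, r)) = 0$ for each of the finitely many indices $j < j_0$, and one picks radii $r_j \in (0, r_0)$ with $\mu_{k_j}(B_X(x, r_j)) < E$. Setting $r_x := \min\set{r_0, r_1, \ldots, r_{j_0 - 1}}$ gives $\sup_j \mu_{k_j}(B_X(x, r_x)) < E \leq i_X(x) E$, establishing Property~(1).
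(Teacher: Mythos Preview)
Your proof is correct and takes a genuinely different route from the paper. The paper proceeds combinatorially: it exhausts each stratum by compact sets, lays down $2^{-\ell}$-nets at all scales, tags the balls where a given $F_k$ has energy at least $E$, uses bounded overlap to control their number, passes to subsequences so that these tagged collections stabilize, and then Hausdorff-converges the bad sets as $\ell\to\infty$ to obtain $P$ in each stratum; a final diagonal argument over the countably many strata assembles the nodal case and accounts for the factor $i_X(x)$. You instead invoke weak-$\star$ compactness of the energy measures once on the whole nodal space, define $P$ as the set of atoms of mass at least $E$, and read off both conclusions from Portmanteau-type inequalities for open and compact sets together with a finite shrinking step for the initial indices. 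Your argument is shorter, avoids the separate treatment of strata, and in fact yields the sharper bound $E$ rather than $i_X(x)E$ in condition~(1); the paper's approach, on the other hand, is more self-contained and makes the energy-concentration mechanism explicit at each scale without appealing to the Portmanteau machinery. One small point worth making explicit in your write-up: you should note that for $r_0$ sufficiently small the closed ball $\overline{B_X(x,r_0)}$ is compact (local compactness of the nodal manifold), so that the upper semicontinuity $\limsup_j \mu_{k_j}(\overline{B_X(x,r_0)})\le \mu(\overline{B_X(x,r_0)})$ is indeed available.
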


\begin{proof}
We consider first the special case that $X$ is a manifold and then the general case.

\smallskip
\noindent
{\bf Step I: special case.} Suppose that $X$ is a manifold.
Since $X$ is $\sigma$-compact, we may fix an exhaustion  $(X_i)_{i\in\N}$ of \(X\) by compact subsets, where $X_i \subset \interior X_{i+1}$ for each $i\in \N$. Then, for each \(i\), we have 
\[ 
R_i = \inf\set{\BLrad_1(X_i,p) \mid p \in X_i} > 0,
\]
and
\[
E_i := \sup_{k\in\N}\int_{X_i}\norm{DF_k} < \infty.
\]

Let \(\ell \in \N\) for which \(2^{-\ell} \leq R_i/2\) and let $E_{i,\ell} \subset X_i$ be an $2^{-\ell}$-net in $X_i$, that is, $X_i \subset \bigcup_{p\in E_{i,\ell}} B(p,2^{-\ell})$ and $d(p,p')\ge 2^{-\ell}$ for $p,p'\in E_{i,\ell}$, $p\ne p'$. Since, for each $p\in X_i$, the ball $B(p,R_i)$ is $2$-bilipschitz to the Euclidean ball $B^n(R_i)$, there exists a constant $Q=Q(n)$ having the property that, for each $p\in X_i$, $\# \left( B(p,2\cdot 2^{-\ell}) \cap E_\ell\right) \le Q$. In particular, the collection $\cB_{i,\ell} = \{ B(p,2^{\ell}) \colon p \in E_\ell\}$ has the property that each point in $X_i$ is contained in at most $Q$ elements of $\cB_{i,\ell}$.

Next, for each mapping \(F_k\) we gather together the balls of \(\cB_{i,\ell}\) where \(F_k\) has ``too high energy'', that is, we define
\[
\cB_{i,\ell,k} = \Set{B \in \cB_{i, \ell} \colon \int_B\norm{DF_k}^n \geq E}.
\]
Since \(\cB_{i,\ell}\) has finite cardinality, there is a subsequence \((\cB_{i,\ell,k_j})_{j\in \N}\) of \((\cB_{i,\ell,k})_{k\in \N}\) and a fixed subcollection \(\cB^b_{i,\ell} \subset \cB_{i,\ell}\) for which \(\cB_{i,\ell,k_j} = \cB^b_{i,\ell}\) for all \(j \in \N\). In particular, for each \(B \in \cB^b_{i,\ell}\) and $j\in \N$,
\begin{equation}
\label{eq:old:8}
\int_B\norm{DF_{k_j}}^n \geq E.
\end{equation}

Using the uniform upper bound for energy \(E_i\), and the energy lower bound \(E\) in each ball in $\cB^b_{i,\ell}$, we have the bound
\begin{equation}\label{eq:old:cB-card-bound}
\big|\cB^b_{i,\ell}\big| \leq \big|\cB_{i,\ell,k}\big| \leq \frac{Q E_i}{E} %= \frac{4^nQ E_i}{E^n_N}
\end{equation}
for the cardinality of $\cB^b_{i,\ell}$; here we adopt the convention that \(1/\infty = 0\). 

After passing to a further subsequence $(F_{k_j})_{j\in \N}$ if necessary, the sets \(\bigcup \cB^b_{i,\ell}\) converge as \(\ell \to \infty\) under the Hausdorff metric, to a finite set \(P_i \subset X_i\). Thus \(P = \bigcup_{i=1}^{\infty}P_i\) is countable and the discreteness of \(P\) follows from the fact that \((X_i)_i\) is an exhaustion by compact sets and that each \(P_i\) is finite. Thus this subsequence $(F_{k_j})_{j\in \N}$ of $(F_k)_{k\in N}$ satisfies conditions \eqref{item:ident-1} and \eqref{item:ident-2}.

Regarding the last claim, we note that, if  \((F_k)_k\) has uniformly bounded energy, there exists \(\infty > E_0 \geq E_i\) for all \(i\in \N\). Since \(\abs{P} \leq \frac{ Q E_0}{E}\), inequality \eqref{eq:old:cB-card-bound} gives a bound for the cardinality of $P$. Thus $P$ is finite. 

\smallskip
\noindent
{\bf Step II: general case.} Suppose now that $X$ is a general nodal manifold. Since $X$ has countably many strata, we may enumerate the strata and apply the previous argument to each stratum separately. Now a diagonal argument yields a subsequence $(F_{k_j})_{j\in \N}$ of $(F_k)_{k\in \N}$ and a subset $P = \bigcup_{M\in \Strata(X)} P_M$, where each $P_M \subset M$ is discrete, satisfying conditions \eqref{item:ident-1} and \eqref{item:ident-2} in each stratum $M$ of $X$, that is,
\begin{enumerate}
\item For every $x\in M\setminus P_M$, there exists $0<r_{x,M}<{\dist}_M(x,P_M)$ for which
\[
\sup_{j\in \N}\int_{B_M(x,r_{x,M})} \norm{DF_{k_j}}^n \le E.
\] 
\item For every \(p \in P_M\) and \(0 < r_{p,M} < {\dist}_M(p,P_M\setminus\sset{p})\), 
\[
\liminf_{j\to \infty}\int_{B_M(p,r_p)} \norm{DF_{k_j}}^n \ge E.
\]
\end{enumerate}

Since $\Sing(X)$ is a discrete set and each singular point $p\in \Sing(X)$ meets only finitely many strata, we conclude that $P \subset X$ is a discrete set. For each $p\in P$, we also have that, for $x\in X\setminus P$,
\[r_x = \min\{ r_{x,M} \colon p \in M \in \Strata(M)\} >0\]
and, for $p\in P$, 
\[r_p=\min\{ r_{p,M} \colon p\in M \in \Strata(M)\}>0.\] 
Since each point $x\in X$ belongs to $i(x)$ strata, we have that the subsequence $(F_{k_k})_{j\in \N}$ satisfies conditions \eqref{item:ident-1} and \eqref{item:ident-2} in the statement. The last claim follows analogously to the case of a single stratum.
\end{proof}

%%%%%%%%%%%%%%%%%%%%%%%%%%%%%%%%%%%%%%%%%%%%%%%%
%%%%%%%%%%%%%%%%%%%%%%%%%%%%%%%%%%%%%%%%%%%%%%%%
%%%%%%%%%%%%%%%%%%%%%%%%%%%%%%%%%%%%%%%%%%%%%%%%

\section{Singular limits of quasiregular curves}

In this section, we use the pigeonhole principle to prove a weak-$\star$ compactness and quasinormality result for sequences of mappings $(F_k \colon X\to N)_{k\in \N}$ admitting a \((K,\omega)\)-quasiregular exhaustion. For the result, we consider a class of weak-$\star$ limits of measures $\star F^*_k\omega$. These measures identify both the limiting maps and points where mass concentrates, and we use them in the proof of Theorem \ref{thm:main-reconstruction} to find the 'bubbling points', which are -- inductively -- turned into nodal points in the proof. The definition of these measures is embedded into the following definition of the singular limits of asymptotically \((K,\omega)\)-quasiregular sequences. 

\begin{definition}
\label{def:singular-limit}
Let \(X\) be an oriented and Riemannian nodal $n$-manifold, and let $(N,\omega)$ be an $n$-calibrated Riemannian $m$-manifold.
A quasiregular $\omega$-curve $F\colon X\to N$ 
is the \emph{$\delta$-singular limit of an asymptotically $(K,\omega)$-quasiregular sequence $(F_k \colon X\to N)_{k\in \N}$ for $\delta>0$} if $F_k|_{X\setminus P} \to F|_{X\setminus P}$ locally uniformly, where $P\subset X$ is a discrete set, and there exists a Radon measure $\mu$ for which
\begin{enumerate}
\item $\star F^*_k \omega \overset{*}\weakto \mu$, 
\item $\mu \llcorner (X\setminus P) = \star F^*\mu$, and 
\item $\mu(\{p\})\ge \delta$ for each $p\in P$. 
\end{enumerate}
We denote $F = \slim_{k\to \infty} F_k$, $\mu = \mlim^\omega_{k\to \infty} F_k$, and $P=\Sing(\mu)$. We also say that $(F_k)_{k\in \N}$ \emph{converges $\delta$-singularly to $F$}.
A sequence $(F_k)_{k\in \N}$ \emph{converges singularly to $F$} if $(F_k)_{k\in \N}$ converges $\delta$-singularly to $F$ for some $\delta>0$.
\end{definition}

An equicontinuous sequence of continuous Sobolev maps, admitting a quasiregular exhaustion, has a converging subsequence if the conditions of the Arzel\`a--Ascoli theorem (here Proposition \ref{prop:norm-fams-hold-qrcs}) are satisfied.

We are now ready to state a quasinormality criterion for a sequence of Sobolev maps admitting a quasiregular exhaustion. Here normality refers to having a singularly converging asymptotically quasiregular subsequence.
\begin{proposition}
\label{prop:concentration-of-mass}
Let $2\le n \le m$, let $X$ be a connected, oriented, and Riemannian nodal $n$-manifold, let $(N,\omega)$ be an $n$-calibrated Riemannian $m$-manifold having bounded geometry, and let \(E > 0\).  
Let $(F_k \colon X\to N)_{k\in \N}$ be a sequence of continuous $W^{1,n}_\loc$-Sobolev mappings having a $(K,\omega)$-quasiregular exhaustion about a discrete set $Q\subset X$ and 
satisfying the following conditions:
\begin{enumerate}
\item the family $(F_k)_{k \in \N}$ is locally equibounded; \label{item:c-o-m-0} 
\item for each $p\in Q$, the sequence \(\big(F_k|_{B(p,r_p)}\big)_{k\in \N}\) converges locally uniformly for some \(r_p > 0\); and \label{item:c-o-m-1}
\item each stratum $M$ of $X$ contains a non-empty open set $U_M \subset M$ having the property that, for each $p\in U_M$, the orbit $\{ F_k(p) \colon k\in \N \}$ has compact closure in $N$. \label{item:c-o-m-2}
\end{enumerate}
Then there exists a subsequence $(F_{k_j} \colon X\to N)_{j\in \N}$ of $(F_k)_{k\in \N}$, which is asymptotically $(K,\omega)$-quasiregular and which
converges $(E/K)$-singularly to a $K$-quasiregular curve $F \colon X\to N$.
\end{proposition}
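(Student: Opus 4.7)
The plan is to combine the pigeonhole principle of Proposition~\ref{prop:ident-bubbling-points} with the normality criterion of Proposition~\ref{prop:norm-fams-hold-qrcs}, and then to read off the atomic structure of the weak-$\star$ limit measure using the quasiregular distortion inequality. I first assume $E\leq E_N$, where $E_N$ is a small-energy bound of $N$; the case $E>E_N$ can be reduced to this one by running the argument with $E_N$ in place of $E$ and afterwards filtering out pigeonhole points whose mass in the limit measure is less than $E/K$. Applying Proposition~\ref{prop:ident-bubbling-points} to $(F_k)$ at level $E$, I obtain a subsequence, still denoted $(F_k)$, together with a discrete set $P_0\subset X$ of candidate bubbling points; I set $P:=P_0\setminus Q$, a discrete subset of $X\setminus Q$.

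To apply Proposition~\ref{prop:norm-fams-hold-qrcs} to this $P$, I verify its two hypotheses. For local equicontinuity of $\{F_k\}$ on $X\setminus P$, I split into two regions. At points of $Q$, equicontinuity is immediate from the hypothesis that $(F_k)$ converges locally uniformly on some neighborhood of each point of $Q$. At points of $X\setminus(P_0\cup Q)$, the per-stratum pigeonhole energy bound $\leq E\leq E_N$, combined with the eventual $(1+\varepsilon_k)K$-quasiregularity of $(F_k)$ there, permits applying Corollary~\ref{cor:local-holder-cont} stratum by stratum to obtain uniform local H\"older continuity. For the orbit-compactness condition, I use that each stratum $M$ is a connected $n$-manifold with $n\geq 2$, so $M\setminus P$ is connected and contained in a single component of $X\setminus P$; the set $U_M\setminus P$ supplied by the third hypothesis of the proposition is nonempty by discreteness of $P$, and provides a point with relatively compact orbit in that component. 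Proposition~\ref{prop:norm-fams-hold-qrcs} then produces an asymptotically $(K,\omega)$-quasiregular subsequence converging locally uniformly on $X\setminus P$ to a continuous map $F$. Lemma~\ref{lem:loc-unif-limit-energy-bound} upgrades $F$ to a $K$-quasiregular $\omega$-curve on $X\setminus P$ with locally finite energy, and Theorem~\ref{thm:Ikonen-removability} extends $F$ across the discrete set $P$ to a $K$-quasiregular $\omega$-curve on all of $X$.

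By local equiboundedness and Theorem~\ref{thm:weak-compact-radon-measures}, a further subsequence satisfies $\star F_k^*\omega\weakstarto\mu$ for some Radon measure $\mu$ on $X$. For the atomic lower bound at $p\in P$, since $p\notin Q$ and the neighborhoods $V_k$ of $Q$ from the quasiregular exhaustion shrink to $Q$, the ball $B_X(p,r)$ is eventually contained in $X\setminus\overline{V_k}$ for any fixed small $r>0$; hence $\norm{DF_k}^n\leq (1+\varepsilon_k)K\,(\star F_k^*\omega)$ almost everywhere on $B_X(p,r)$, and combining the pigeonhole lower bound $\liminf_k\int_{B_X(p,r)}\norm{DF_k}^n\geq E$ with the weak-$\star$ convergence (in the form $\mu(\overline{B})\geq\limsup_k \mu_k(\overline{B})$ for compact $\overline{B}$) yields $\mu(\overline{B_X(p,r)})\geq E/K$; letting $r\to 0$ gives $\mu(\{p\})\geq E/K$. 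The main technical obstacle is the identity $\mu\llcorner(X\setminus P)=\star F^*\omega$: on precompact open subsets of $X\setminus(P\cup Q)$ this reduces to the weak continuity of the pullback $F\mapsto\star F^*\omega$ under locally uniform convergence with bounded local $n$-energy, following the pattern of the proof of Lemma~\ref{lem:loc-unif-limit-energy-bound} and \cite[Lemma~4.4]{pankka2020qrcs}; near $Q$ one must additionally exploit the $(K,\omega)$-quasiregular exhaustion and the local uniform convergence there to preclude hidden atoms of $\mu$ at points of $Q\setminus P$.
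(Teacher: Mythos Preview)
Your overall strategy---pigeonhole, then Arzel\`a--Ascoli via Proposition~\ref{prop:norm-fams-hold-qrcs}, then removability, then weak-$\star$ compactness---matches the paper's proof, and your argument in the case $E\le E_N$ is essentially the same as the paper's, just with Corollary~\ref{cor:local-holder-cont} in place of the paper's appeal to higher integrability of quasiregular curves together with Morrey's embedding.

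The genuine gap is your reduction of the case $E>E_N$. Running the argument with $E_N$ in place of $E$ produces a discrete set $P'$ and a subsequence that converges $(E_N/K)$-singularly. But points $p\in P'$ with atom mass $\mu(\{p\})\in[E_N/K,\,E/K)$ cannot simply be ``filtered out'': discarding them from $P'$ to form a smaller $P$ destroys both condition~(2) of Definition~\ref{def:singular-limit} (since $\mu$ retains a genuine atom at each such $p$, while $\star F^*\omega$ is non-atomic) and the locally uniform convergence on $X\setminus P$ (which was only established on the smaller set $X\setminus P'$). There is no mechanism in your argument to upgrade $(E_N/K)$-singular convergence to $(E/K)$-singular convergence for the same subsequence. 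The paper avoids this entirely: it obtains local equicontinuity on $X\setminus(P\cup Q)$ for \emph{arbitrary} $E>0$ by invoking higher integrability of quasiregular curves (so that the $F_k$ lie locally in $W^{1,p}$ for some $p>n$, with bounds depending only on the local $W^{1,n}$-energy bound~\eqref{eq:local-energy-bound}) and then Morrey's theorem. This route needs no smallness of $E$, so no reduction step is required.
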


\begin{proof}
By Proposition~\ref{prop:ident-bubbling-points}, there exist a discrete set \(P \subset X\) and a subsequence \((F_{k_j})_j\) of \((F_k)_k\) having the property that, for every \(x \in X\setminus P\), there exists \(0 < r_x < \dist(x,P)\) satisfying
\begin{equation}
\label{eq:local-energy-bound}
\sup_{j \in \N}\int_{B(x,r_x)}\norm{DF_{k_j}}^n \le i_X(x)E.
\end{equation}
Since $(F_k)_{k\in \N}$ admits a $(K,\omega)$-quasiregular exhaustion about $Q$, we have by higher integrability of quasiregular curves (see e.g.~\cite{Onninen-Pankka-qrc-higher-integrability}), the local energy bound \eqref{eq:local-energy-bound}, and by Morrey's theorem (see e.g~\cite[VII.3.1]{Rickman-book-1993}), that the subsequence $(F_{k_j}|_{X \setminus (P\cup Q)})_j$ is uniformly locally H\"older continuous, and hence that $(F_{k_j})_j$ is locally equicontinuous in $X\setminus (P\cup Q)$.

By \eqref{item:c-o-m-2} and Proposition~\ref{prop:norm-fams-hold-qrcs}, the sequence \((F_{k_j})_{j}\) has a further subsequence, denoted \((F_{k_\ell})_{\ell\in \N}\), which is asymptotically \((K,\omega)\)-quasiregular about $P\cup Q$. By Lemma \ref{lem:loc-unif-limit-energy-bound}, there exists a \(K\)-quasiregular \(\omega\)-curve \(\widetilde{F}\colon X\setminus (P\cup Q) \to N\) to which \((F_{k_\ell}|_{X\setminus (P\cup Q)})_{\ell\in \N}\) converges locally uniformly. 
% to \(\widetilde F\). 
Now, by \eqref{item:c-o-m-1}, we have that the sequence $(F_{k_\ell}|_{X\setminus P})_{\ell\in \N}$ converges locally uniformly to a continuous map $\widetilde F \colon X\setminus P \to N$ which is a $K$-quasiregular $\omega$-curve in $X\setminus (P\cup Q)$. By Theorem \ref{thm:cont-removability}, $\widetilde F$ is a $K$-quasiregular $\omega$-curve. 

Since the subsequence \((F_{k_\ell})_\ell\) is locally equibounded, Lemma \ref{lem:loc-unif-limit-energy-bound} additionally implies that  
\[
\int_{A}\norm{D \widetilde F}^n \leq \liminf_{\ell\to \infty}\int_{A}\norm{D F_{k_\ell}}^n \leq \sup_{\ell\in \N}\int_{A}\norm{D F_{k_\ell}}^n < \infty
\]
for each compact subset \(A \subset X\). 

Now by Theorem~\ref{thm:Ikonen-removability}, \(\widetilde F\) extends over \(P\) to a \(K\)-quasiregular \(\omega\)-curve \(F \colon X \to N\). By construction, $(F_{k_\ell}|_{X\setminus P})_{\ell\in \N}$ converges locally uniformly to $F|_{X\setminus P}$. Thus the measures $\star(F_{k_\ell}|_{X\setminus P})^*\omega$ converge in the weak-$\star$ sense to $\star(F|_{X\setminus P})^*\omega$; see \cite[Lemma 4.3]{pankka2020qrcs}. 

Since the sequence $(F_{k_\ell})_{\ell}$ is locally equibounded, we have, by Theorem~\ref{thm:weak-compact-radon-measures}, after passing to a subsequence if necessary, that measures $\star F^*_{k_\ell}\omega$ converge in the weak-$\star$ sense to a Radon measure $\mu$ on $X$. By the uniqueness of weak-$\star$ limits, $\mu\mathbin\llcorner (X\setminus P) = \star F^*\omega$.

Since \(P\) is discrete, \(\mu \mathbin\llcorner P = \sum_{p\in P} \lambda_p\delta_p\) for some \(\lambda_p \geq 0\). For each \(p \in P\), Proposition~\ref{prop:ident-bubbling-points} implies that, for sufficiently small $r_p>0$,
 \[
K\liminf_{\ell\to\infty}\int_{B(p,r_p)}F_{k_\ell}^{*}\omega \geq \liminf_{\ell\to\infty}\int_{B(p,r_p)}\norm{DF_{k_\ell}}^n \geq E.
\]
Thus \(\lambda_p \geq E/K\), and hence $(F_{k_\ell})_{\ell\in \N}$ is an asymptotically \((K,\omega)\)-quasiregular sequence which converges $(E/K)$-singularly to $F$. This concludes the proof.
\end{proof}

\begin{remark}
In \cite[Section 1.5]{Gromov-Invent-1985} Gromov calls the limit mapping $F$ a \emph{weak limit of the sequence $(F_k)$}. We have reserved this terminology for measures.
\end{remark}

\section{Nodal resolutions of singular limits}
\label{sec:first-surgery}

In this section, we discuss a method to resolve the point-masses arising in the singular limits. This formalizes the heuristic idea of bubble creation. For the discussion, we give the following definition. Note that in the definition we anticipate an iteration and hence all the bubbles in the bubble tree $\widehat X$ need not meet the original nodal manifold $X$.

\begin{definition}\label{def:nodal-resolution}
A singularly converging sequence $(\widehat F_j \colon \widehat X \to N)_{j\in \N}$ is a \emph{nodal pre-resolution} of a singularly converging sequence $(F_k \colon X\to N)_{k\in \N}$ if
\begin{enumerate}
\item $\widehat X$ is a bubble tree over $X$, \label{item:resolution-1}
\item $\slim_{j\to \infty} \widehat F_j|_X = \slim_{k \to \infty} F_k$, \label{item:resolution-2}
\item $(\pi_{\widehat X, X})_* \left( \mlim^\omega_{j\to \infty} \widehat F_j \right) = \mlim^\omega_{k \to \infty} F_k$, and \label{item:resolution-3}
\item $\Sing(\mlim^\omega_{j \to \infty} \widehat F_j)\cap X = \emptyset$. \label{item:resolution-4}
\end{enumerate}
We call \((\widehat F_j)_{j\in\N}\) a \emph{nodal resolution} of \((F_k)_{k\in\N}\) when \(\Sing(\mlim^\omega_{j \to \infty} \widehat F_j) = \emptyset\).
\end{definition}

The reader may wonder at the use of different symbols for the running indices of the sequences $(F_k)_{k\in \N}$ and $(\widehat F_j)_{j\in \N}$. In principle, these two sequences have no pairwise relation, with only the limits agreeing in the sense of the definition. That being said, we will construct nodal pre-resolutions via (iterated) nodal surgeries of a subsequence $(F_{k_j})_{j\in \N}$ of $(F_k)_{k\in \N}$, from which we obtain much stronger, though unneeded, pairwise relations.

The purpose of this definition is to formalize how the limits $\slim_{k \to \infty} F_k$ and $\mlim^\omega_{k\to \infty} F_k$ are lifted to the bubble tree $\widehat X$ over $X$. In particular, by \eqref{item:resolution-3} and \eqref{item:resolution-4}, the singularities of $\mlim^\omega_{k\to \infty} F_k$ are pre-resolved, though not yet removed, in the nodal pre-resolution in the sense that $(\mlim^\omega_{j \to \infty} \widehat F_j)(p) = 0$ and
\begin{equation}
\label{eq:resolution}
(\mlim^\omega_{j \to \infty} \widehat F_j)(\pi_{\widehat X,X}^{-1}(p)) = (\mlim^\omega_{k \to \infty}F_k)(p)
\end{equation}
for each \(p \in \Sing(\mlim^\omega_{k \to \infty}F_k)\). 
Additionally, this means that there exists a neighborhood \(V \subset \widehat X\) of \(\Sing(\mlim^\omega_{k \to \infty}F_k)\) for which \((\widehat F_j|_V)_{j\in\N}\) converges locally uniformly.

\begin{remark}
For the coming discussion, we note that a nodal pre-resolution \((\widehat F_j)_{j\in\N}\) converges locally uniformly if and only if \(\Sing(\mlim^\omega_{j \to \infty} \widehat F_j) = \emptyset\) i.e. \((\widehat F_j)_{j\in\N}\) is a nodal resolution of \((F_j)_{j\in\N}\). This is the reason for the nodal pre-resolution terminology.
Note also that nodal pre-resolutions are transitive i.e.~if \((\widetilde F_\ell \colon \widetilde X \to N)_{\ell \in\N}\) is a nodal pre-resolution of \((\widehat F_j \colon \widehat X \to N)_{j\in\N}\) and \((\widehat F_j)_{j\in\N}\) is a nodal pre-resolution of \((F_k \colon X \to N)_{k\in\N}\), then \((\widetilde F_\ell)_{\ell \in \N}\) is a nodal pre-resolution of \((F_k)_{k\in\N}\).       
\end{remark}

\subsection{Nodal Surgery of quasiregular curves}
\label{sec:nodal-surgery}

We will now define the nodal surgery of quasiregular curves and hence of asymptotically $(K,\omega)$-quasiregular sequences $(F_k \colon X\to N)_{k\in \N}$. Nodal surgery is essentially a type of extension of these classes of mappings over bubbles, which is sufficiently explicit that their constructing requires only a filling lemma; see Lemma~\ref{lemma:surgery-lemma-new}. 
Our method of constructing nodal surgeries yields a nodal manifold $X^{\vee P}$ -- a bubble tree over $X$ -- associated to a family $P$ of discrete subsets of $X$, and a sequence of asymptotically quasiregular curves $(\widehat F_k \colon X^{\vee P} \to N)_{k\in \N}$. We define the nodal manifold $X^{\vee P}$ in two steps.

For an $n$-manifold $M$ and a discrete set $P\subset M$, we set $M^{\vee P}$ to be the nodal manifold with \(\Strata(M^{\vee P}) = \{M\}\cup\set{\bS^n_p \mid p \in P}\),
%and \(\Sing(M^{\vee P}) = \set{ \{p,-e_{n+1}\} \mid p \in P, -e_{n+1} \in \bS^n_p}\), 
where each \(\bS^n_p = \bS^n \times \{p\}\) is an isometric copy of \(\bS^n\),
i.e.
\[
M^{\vee P} = M \bigvee_{p\in P} \bS^n_p,
\]
where each $p\in P \subset M$ is identified with $(-e_{n+1},p) \in \bS^n_p$. As usual, we consider $M$ as a subset of $M^{\vee P}$.

Next, let $X$ be a $n$-nodal manifold and let $P = (P_M)_{M\in \Strata(X)}$ be a family of discrete sets, where $P_M \subset M$ for each $M\in \Strata(X)$. We denote $M_X = \coprod_{M\in \Strata(X)} M$ and let $\pi \colon M_X \to X$ be the associated quotient map. 
Since $M_X \subset \coprod_{M\in \Strata(X)} M^{\vee P_M}$, we may define $X^{\vee P}$ to be the nodal manifold
\begin{equation}
\label{eq:general-XveeP}
X^{\vee P} = \left( \coprod_{M\in \Strata(X)} M^{\vee P_M}\right)\Big/{\sim},
\end{equation}
where $\sim$ is the equivalence relation reidentifying the singular points of $X$, that is, the equivalence relation induced by the quotient map $\pi \colon M_X \to X$. Now $X \subset X^{\vee P}$ and \(\Strata_X(X^{\vee P}) = \Set{\bS^n_p \mid p \in P_M \ \text{and}\ M \in \Strata(X)}\).

Observe that, if $P_M \subset M\setminus \Sing(X)$ for each stratum $M$, we may take $P$ to be the subset $P= \bigcup_{M\in \Strata(X)} P_M$ and take
\[
X^{\vee P} = X \bigvee_{p \in P} \bS^n_p.
\]
as in the case of an $n$-manifold.

Having discussed this preliminary terminology for a ``single layer'' bubble tree over a nodal manifold, we are now ready to define the nodal surgery of a quasiregular curve. For the nodal surgery of a map $X\to N$ at a point $p\in X$ , we introduce quasiconformal embeddings $\varphi^+_{p,M} \colon B_M(p,R) \to \bS^n$ and $\varphi^-_{p,M} \colon B_M(p,R) \to \bS^n$ for $p\in M\in \Strata(X)$ as follows. For the discussion, let $\sigma \colon \R^n \to \bS^n$ be the stereographic projection for which $\sigma \R^n = \bS^n\setminus \{-e_{n+1}\}$ and let $\rho \colon \bS^n \to \bS^n$, $(x_1,\ldots, x_n, x_{n+1}) \mapsto (x_1,\ldots, x_n, -x_{n+1})$. 

Let $M$ be a Riemannian $n$-manifold, $p\in M$, and $R < \min\{1, \injrad_M(p)\}$. We denote $\xi_p = \left( \exp_p|_{B(0,R)}\right)^{-1} \colon B_M(p,R) \to \R^n$ a local inverse of the exponential map at $p$, where we identify $T_p M$ isometrically with $\R^n$. We may now fix constants $\lambda^+_R>0$ and $\lambda^-_R>0$ satisfying 
\[
\sigma(B^n(\lambda^+_R R)) = \bS^n\setminus \bar B_{\bS^n}(-e_{n+1},R) 
\]
and
\[
(\rho \circ \sigma)(B^n(\lambda^-_R R)) = B_{\bS^n}(-e_{n+1},R),
\]
respectively. We define the mappings
\[
\varphi^+_{M,p,R} \colon B_M(p,R) \to \bS^n, \quad x\mapsto \sigma( \lambda^+_R \xi_p(x)),
\]
and 
\[
\varphi^-_{M,p,R} \colon B_M(p,R) \to \bS^n, \quad x\mapsto (\rho \circ \sigma)(\lambda^-_R \xi_p(x)).
\]
Note that, by the radial symmetry of the stereographic projection, we have  
\[
\varphi^+_{M,p,R}|_{\partial B_M(p,R)} = \varphi^-_{M,p,R}|_{\partial B_M(p,R)}.
\]
We denote by $\rho_R \in (0,R)$ the radius for which 
\[
\varphi^+_{M,p,R}(B_M(p,\rho_R)) = \bS^n_p \cap \left( \R^n\times (0,\infty) \right).
\]

Having mappings $\varphi^+_{p,M}$ and $\varphi^-_{p,M}$ at our disposal, we define a nodal surgery  of a map $X\to N$ as follows.

\begin{definition}
A mapping $\widehat F \colon X^{\vee P} \to N$ is a \emph{nodal surgery of a mapping $F\colon X\to N$ over a family of discrete sets $P = (P_M)_{M \in \Strata(X)}$}, where \(P_M \subset M\) for each \(M \in \Strata(X)\), 
if there exists a family 
\[
\cB = \{ B_M(p,R_p) \colon p\in P_M,\ M \in \Strata(X)\}
\]
of mutually disjoint open balls having the properties that  
\[
\widehat F|_{X \setminus \bigcup \cB} = F|_{X \setminus \bigcup \cB}
\]
and, for each $p\in P_M$ and \(M \in \Strata(X)\), there exists $0<r_p< R_p$ for which
\begin{enumerate}
   \item $\widehat F |_{\bS^n_p\setminus \bar B_{\bS^n_p}(-e_{n+1},r_p)} \circ \varphi^+_{M,p,r_p} = F|_{B_{M}(p,r_p)}$, and
   \item $\widehat F |_{B_{\bS^n_p}(-e_{n+1},r_p)} \circ \varphi^-_{M,p,r_p} = \widehat F|_{B_{M}(p,r_p)}$.
\end{enumerate}
\end{definition}

Our basic surgery lemma about the existence of nodal surgeries of quasiregular curves reads as follows. For the statement, we denote $A_M(p;r,R) = B_M(p,R)\setminus \bar B_M(p,r)$ for $0<r<R$.

\begin{lemma}
\label{lemma:surgery-lemma-new}
Let $p\in M$ and $\varepsilon>0$. Let also $0<r'<r < R< \QCrad_{\varepsilon}(M,p)$ and let $F_p \colon B_M(p,R) \to N$ be a $K$-quasiregular $\omega$-curve. Let also $X_p = B_M(p,R)\vee \bS^n_p$.
If $F_pA_M(p;r',r) \subset B_N(q,\widehat R)$, where $\widehat R < \BLrad_1(N,q)$ for some $q\in N$,
then there exists a Sobolev map $\widehat F_p \colon X_p \to N$ in $W^{1,n}_\loc(X_p,N)$ having the following properties:
\begin{enumerate}
\item the restriction $\widehat F_p|_{X_p\setminus B_{X_p}(p,r)}$ is a $(1+\varepsilon)K$-quasiregular $\omega$-curve; \label{item:sl-1}
\item $\widehat F_p$ is a nodal surgery of $F_p$ over $\{p\}$; \label{item:sl-2}
\item $\norm{D\widehat F_p}_{L^n(X_p)} \le C(n)(1+\varepsilon)^{1/n} \left( \norm{DF}_{L^n(A_M(p;r',R))} +\left( \log \frac{r}{r'}\right)^\frac{1-n}{n} \right)$; and  \label{item:sl-3}
\item $\widehat F_p(B_{X_p}(p,r)) \subset B_N(q, \widehat R)$. \label{item:sl-4}
\end{enumerate}
\end{lemma}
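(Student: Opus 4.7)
The plan is to construct \(\widehat F_p\) explicitly by piecewise definition on the natural regions of \(X_p = B_M(p,R) \vee \bS^n_p\), using a logarithmic cutoff of \(F_p\) in a bilipschitz chart at \(q\in N\), and then pushing the construction onto the bubble \(\bS^n_p\) via the quasiconformal maps \(\varphi^\pm_{M,p,r}\), with \(r_p = r\) and \(R_p\in (r,R)\). Concretely, fix \(\xi_p = \exp_p^{-1}\) on \(B_M(p,R)\) and \(\xi_q = \exp_q^{-1}\) on \(B_N(q,\widehat R)\) as bilipschitz charts (the latter with distortion at most \(2\) by \(\widehat R < \BLrad_1(N,q)\)), and fix the log-cutoff \(\chi\colon[0,\infty)\to[0,1]\) that vanishes on \([0,r']\), equals \(\log(s/r')/\log(r/r')\) on \([r',r]\), and equals \(1\) on \([r,\infty)\). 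Then I would set \(\widehat F_p = F_p\) on \(A_M(p;r,R)\), \(\widehat F_p(x) = \exp_q(\chi(|\xi_p(x)|)\,\xi_q(F_p(x)))\) on \(A_M(p;r',r)\), \(\widehat F_p\equiv q\) on \(B_M(p,r')\), \(\widehat F_p = F_p\circ(\varphi^+_{M,p,r})^{-1}\) on \(\bS^n_p\setminus \bar B_{\bS^n_p}(-e_{n+1},r)\), and \(\widehat F_p = \widehat F_p|_{B_M(p,r)}\circ(\varphi^-_{M,p,r})^{-1}\) on the cap.

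Continuity across \(\partial B_M(p,r')\) and \(\partial B_M(p,r)\) is built in via \(\chi(r')=0\) and \(\chi(r)=1\); continuity across \(\partial B_{\bS^n_p}(-e_{n+1},r)\) follows from the identity \(\varphi^+_{M,p,r}|_{\partial B_M(p,r)} = \varphi^-_{M,p,r}|_{\partial B_M(p,r)}\) together with \(\widehat F_p|_{\partial B_M(p,r)} = F_p|_{\partial B_M(p,r)}\). Piecewise \(W^{1,n}_\loc\)-regularity is a consequence of Lemma~\ref{lem:pushforward-sobolev-qr} and the smoothness of the charts. Property~(2) holds by construction. For property~(1), on \(A_M(p;r,R)\) the map equals \(F_p\) and is \(K\)-quasiregular, and on \(\bS^n_p\setminus\bar B_{\bS^n_p}(-e_{n+1},r)\) it equals \(F_p\circ(\varphi^+_{M,p,r})^{-1}\); the hypothesis \(r<\QCrad_\varepsilon(M,p)\) makes \(\xi_p\)---and hence \(\varphi^+_{M,p,r}\), being a composition of \(\xi_p\) with a conformal dilation and the conformal stereographic projection---\((1+\varepsilon)\)-quasiconformal, so Proposition~\ref{lem:pre-comp-qrc-with-qr} yields \((1+\varepsilon)K\)-quasiregularity. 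For property~(4), on \(B_M(p,r')\) the image is \(\{q\}\); on \(A_M(p;r',r)\), replacing the linear interpolation in the chart at \(q\) with geodesic interpolation proportional to \(\chi\)---admissible by geodesic convexity of \(B_N(q,\widehat R)\) under \(\widehat R<\BLrad_1(N,q)\) and bounded geometry---keeps the image inside \(B_N(q,\widehat R)\); and on the cap the image is contained in \(\widehat F_p(B_M(p,r))\subset B_N(q,\widehat R)\).

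For property~(3), I would split \(\int_{X_p}\norm{D\widehat F_p}^n\) into contributions from \(A_M(p;r,R)\) (giving \(\norm{DF_p}_{L^n(A_M(p;r,R))}^n\)), \(B_M(p,r')\) (zero), \(A_M(p;r',r)\) (the chain-rule bound \(\norm{D(\exp_q(\chi\xi_q F_p))}^n\lesssim |\chi'|^n|\xi_q F_p|^n + \chi^n\norm{D(\xi_q F_p)}^n\) together with \(|\xi_q F_p|\le 2\widehat R\), \(\norm{D(\xi_q F_p)}\le 2\norm{DF_p}\), and the capacity identity \(\int_{A_M(p;r',r)}|\chi'(|\xi_p(\cdot)|)|^n\lesssim\omega_{n-1}(\log(r/r'))^{1-n}\), producing a bound by a constant times \(\widehat R^n(\log(r/r'))^{1-n} + \norm{DF_p}_{L^n(A_M(p;r',r))}^n\)), and the bubble (handled by \((1+\varepsilon)\)-quasiconformal change of variables under \(\varphi^\pm_{M,p,r}\)). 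The main technical hurdle I anticipate is the bookkeeping of the bubble energy: the outer-cap piece is \(F_p\circ(\varphi^+)^{-1}\), whose \(L^n\)-energy naively depends on \(\norm{DF_p}_{L^n(B_M(p,r))}\) and thereby on the uncontrolled inner disk \(B_M(p,r')\). Absorbing this into the capacity term---either by a small alternative choice of \(r_p\) exploiting the quadratic shrinkage of the preimage \((\varphi^+_{M,p,r_p})^{-1}(\bS^n\setminus\bar B_{\bS^n_p}(-e_{n+1},r))\) together with absolute continuity of \(\norm{DF_p}_{L^n}\) as a measure, or by a matched interpolation on \(B_M(p,r_p)\)---is the delicate coordination of the argument. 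Combining the estimates and taking \(n\)-th roots produces the claim with constant \(C(n)(1+\varepsilon)^{1/n}\).
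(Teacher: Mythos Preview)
Your construction is essentially the paper's: the same logarithmic cutoff in exponential charts, the same piecewise definition, and the same transport to the bubble via \(\varphi^\pm\). Properties (1), (2), and (4) go through exactly as you describe.

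The concern you raise at the end is well-founded, but the resolution is not the delicate absorption you propose. The outer cap \(\bS^n_p\setminus\bar B_{\bS^n_p}(-e_{n+1},r)\) carries \(F_p\circ(\varphi^+)^{-1}\), whose \(L^n\)-energy is comparable to \(\norm{DF_p}_{L^n(B_M(p,r))}\) and thus includes the inner disk \(B_M(p,r')\); this term \emph{cannot} be absorbed into \((\log(r/r'))^{(1-n)/n}\), and the paper does not do so. In fact the paper's own argument only establishes the energy bound on the neck region \(B_{X_p}(p,r)=B_M(p,r)\cup B_{\bS^n_p}(-e_{n+1},r)\): equation (7.2) bounds \(\norm{D\widetilde f}_{L^n(B^n(r))}\), and the ``change of variables'' invoked afterward transfers this to \(B_M(p,r)\) and the lower cap only. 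The statement of item~(3) as written, with domain \(X_p\), is too strong, and the downstream usage in Proposition~7.2 confirms this: the claim there that \(\norm{DF'_k}_{L^n(X_{p,k})}\to 0\) would be false if \(X_{p,k}\) included the full bubble, since the bubble is precisely where the concentrated mass lands. What is actually needed---and what the paper actually proves---is the bound on \(B_{X_p}(p,r)\), which ensures that no mass concentrates at the new nodal point and that the sequence converges locally uniformly there. So drop your attempted absorption of the inner-disk energy; prove (3) with \(B_{X_p}(p,r)\) in place of \(X_p\), and your argument is complete and matches the paper's.
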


\newcommand{\cc}{\mathrm{cap}}
\newcommand{\cut}{\mathrm{cut}}

\begin{proof}
We begin by defining an auxiliary map $\widehat f \colon B_M(p,R) \to N$.
For this, let $\varphi \colon B_N(q,\widehat R) \to B^m(\widehat R)$ be a chart at $q$ given by the exponential map of $N$ at $q$ and let $\psi \colon B_M(p,R)\to N$ be a chart at $p$ given by the exponential map of $M$ at $p$. Also let $\widetilde F \colon A(r',r) \to B^n(\widehat R)$ be the map $x\mapsto \varphi \circ F \circ \psi^{-1}(x)$, where $A(r',r) = \bar B^n(r)\setminus B^n(r')$. 

We fix a smooth function $u\colon B^n(R) \to [0,1]$, satisfying $u(x) = 0$ for $|x|\le r'$ and $u(x)=1$ for $|x|\ge r$,  which satisfies
\[
\norm{\grad u}_{L^n(A(r'r)} \le 2 \cc_n(\bar B^n(r'), B^n(r)) = 2 c(n)\log \left( \frac{r}{r'} \right)^{1-n};
\]
see e.g.~\cite[Section 2, Example 2.12]{heinonen2018nonlinear} for a discussion of conformal capacity.

We also define a map $\widetilde f \colon B^n(r) \to B^m(\widehat R)$ by the formula 
\[
x\mapsto \left\{\begin{array}{ll}
u(x)\widetilde F(x), & \text{if } |x|\ge r'\\
0, & \text{if }|x|\le r'.
\end{array}\right.
\]
Next, we define another map $\widehat f \colon B_M(p,R) \to N$ by the formula
\[
\widehat f(x) = \left\{ \begin{array}{ll}
F(x), & \text{if } x\not \in B_M(p,r) \\
\varphi^{-1}\left(\widetilde f(\psi(x))\right), & \text{if } x\in B_M(p,r).
\end{array}\right.
\]
Then $\widehat f(B_M(p,R)) \subset B_N(q,\widehat R)$, and clearly \(\widehat f \in W^{1,n}_\loc(B_M(p,R), N)\). Before completing the construction of the mapping $\widehat F_p$, we show that 
\begin{equation}
\label{eq:energy-mollification}
\norm{D\widetilde f}_{L^n(B^n(r))} \le (1+\varepsilon)^{1/n} \left( 3\norm{D\widetilde F}_{L^n(A_M(p;r',R))} + c(n)\left( \log \frac{r}{r'}\right)^\frac{1-n}{n} \right).
\end{equation}
We first observe that
\[
D\widetilde f(x) = u(x)D\widetilde F(x) - \langle \grad u(x), \widetilde F(x) \rangle
\]
for almost every $x\in B^n$. Since $u(x)=0$ for $x\in B^n(r')$ and $\grad u(x) = 0$ for $x\not \in A(r',r)$, we have that
\begin{align*}
|D\widetilde f(x)| &\le|D\widetilde F(x)| + |\grad u(x)|
= \chi_{A(r',R)}(x)|D\widetilde F(x)|+ \chi_{A(r',r)}(x)|\grad u(x)|
\end{align*}
for almost every $x\in B^n(R)$, where $\chi_E$ is the characteristic function of a set $E \subset B^n(R)$. Thus, by Minkowski's inequality,
\begin{align*}
\norm{D\widetilde f}_{L^n(B^n(R))} &= \norm{\chi_{A(r',R)}|D\widetilde F|+ \chi_{A(r',r)}(x)|\grad u|}_{L^n(B^n(R))} \\
&\le \norm{\chi_{A(r_2,R)}|D\widetilde F|}_{L^n(B^n(R))}+ \norm{\chi_{A(r',r)}(x)|\grad u|}_{L^n(B^n(R))} \\
&=  \norm{D\widetilde F}_{L^n(A(r',R))}+ \norm{\grad u}_{L^n(A(r',r))} \\
&\le \norm{D\widetilde F}_{L^n(A(r',R))}+ 2 c(n) \log \left( \frac{r}{r'}\right)^{1-n}.
\end{align*}

We are now ready to define the mapping $\widehat F_p$. 
Set $\widehat F_p = \widehat f$ on $B_M(p,R)$, and on the sphere $\bS^n_p$ set 
\[
\widehat F_p|_{\bS^n\setminus B_{\bS^n}(-e_{n+1}, r)} = F \circ (\varphi^+_{M,p,R})^{-1} 
\text{ and } 
\widehat F_p|_{B_{\bS^n}(-e_{n+1},r)} = \widehat f \circ (\varphi^-_{M,p,R})^{-1}.
\]
Since $R<\QCrad_\varepsilon(M,p)$, the mappings $\varphi^+_{M,p,R}$ and $\varphi^-_{M,p,R}$ are $(1+\varepsilon)$-quasiconformal. Thus
we have that $\widehat F_p$ is in \(W^{1,n}_\loc(X_p,N)\), is a $(1+\varepsilon)K$-quasiregular $\omega$-curve on $X_p \setminus B_{X_p}(p,r)$, and that \eqref{item:sl-1} holds.
Furthermore, $\widehat F_p$ is a nodal surgery of $F_p$ by definition. Hence \eqref{item:sl-2} is satisfied. Finally \eqref{item:sl-3} holds by \eqref{eq:energy-mollification} and change of variables, and \eqref{item:sl-4} holds by the choice of $\widehat f$.
\end{proof}

\subsection{Construction of nodal pre-resolutions via surgery}

We are now ready to show how to construct a nodal pre-resolution of a singularly converging asymptotically quasiregular sequence using nodal surgery.

\begin{proposition} 
\label{prop:first-surgery-nodal-manifold}
Let $2\le n \le m$, let $X$ a connected, oriented, and Riemannian nodal $n$-manifold, and let $(N, \omega)$ be an \(n\)-calibrated Riemannian \(m\)-manifold having bounded geometry.
Let $(F_k \colon X\to N)_{k\in \N}$ be a locally equibounded and singularly converging asymptotically $(K,\omega)$-quasiregular sequence for $K\ge 1$. 
Then, for
\[
P = \left(\Sing(\mlim^\omega_{k \to \infty} F_k|_{M})\right)_{M \in \Strata(X)},
\] 
there exists a locally equibounded and asymptotically $(K,\omega)$-quasiregular sequence 
\[
(\widehat F_j \colon X^{\vee P} \to N)_{j\in \N},
\]
which is a nodal pre-resolution of a subsequence \((F_{k_j})_{j\in\N}\) of $(F_k)_{k \in \N}$.
\end{proposition}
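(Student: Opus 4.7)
The plan is to use nodal surgery (Lemma~\ref{lemma:surgery-lemma-new}) at each point of the singular set \(\Sigma := \Sing(\mu) \subset X \setminus \Sing(X)\), where \(\mu := \mlim^\omega_{k\to\infty} F_k\), to construct a new sequence \((\widehat F_k)_{k\in\N}\) on the bubble tree \(X^{\vee P}\), and then to apply Proposition~\ref{prop:concentration-of-mass} to extract a singularly converging subsequence. Let \(F := \slim_{k\to\infty} F_k\); by Definition~\ref{def:singular-limit} we have \(F_k|_{X\setminus \Sigma} \to F|_{X\setminus \Sigma}\) locally uniformly and \(\star F_k^{*}\omega \weakstarto \mu\). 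Let \((\varepsilon_k)_{k\in\N}\downarrow 0\) be the sequence from the \((K,\omega)\)-quasiregular exhaustion of \((F_k)_k\) about some \(Q \subset \Sing(X)\), so that \(Q \cap \Sigma = \emptyset\).

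First I would fix small neighborhood sizes \(r_p > 0\) for each \(p \in \Sigma\) (with \(\overline{B_M(p, r_p)} \cap \Sigma = \{p\}\) and \(r_p < \BLrad_1(M, p)\)), and then, for each \(k\), pick surgery radii \(0 < r'_{p,k} < r_{p,k} < \min\{r_p, \QCrad_{\varepsilon_k}(M, p)\}\) and target scales \(\widehat R_{p,k} > 0\) satisfying, as \(k \to \infty\): (i) \(r_{p,k} \downarrow 0\); (ii) \(\log(r_{p,k}/r'_{p,k}) \to \infty\), so that the capacity term of Lemma~\ref{lemma:surgery-lemma-new}(3) vanishes; (iii) \(F_k(A_M(p; r'_{p,k}, r_{p,k})) \subset B_N(F(p), \widehat R_{p,k})\) with \(\widehat R_{p,k} \downarrow 0\) and \(\widehat R_{p,k} < \BLrad_1(N, F(p))\); (iv) \(\int_{B_M(p, r_{p,k})} \star F_k^{*}\omega \to \mu(\{p\})\); and (v) \(r'_{p,k} \leq r_{p,k}^2/r_p\), so that the \(\varphi^{+}_{M,p,r_{p,k}}\)-preimage of the bubble annulus \(A_{\bS^n_p}(-e_{n+1}; r_{p,k}, r_p)\) lies within \(A_M(p; r'_{p,k}, r_{p,k})\). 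A diagonal argument based on continuity of \(F\) at \(p\), the local uniform convergence \(F_k \to F\) on \(X \setminus \Sigma\), and the weak-\(\star\) convergence \(\star F_k^{*}\omega \weakstarto \mu\) (combined with the atomic nature of \(\mu|_\Sigma\)) provides such radii. Applying Lemma~\ref{lemma:surgery-lemma-new} simultaneously at all \(p \in \Sigma\) (the surgery balls being pairwise disjoint for \(k\) large) then produces a Sobolev map \(\widehat F_k \colon X^{\vee P} \to N\) which equals \(F_k\) outside the surgery balls.

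Next I would verify the hypotheses of Proposition~\ref{prop:concentration-of-mass} for \((\widehat F_k)_k\) with exhaustion set \(Q \cup \Sigma\). Local equiboundedness follows from Lemma~\ref{lemma:surgery-lemma-new}(3); the \((K,\omega)\)-quasiregular exhaustion about \(Q \cup \Sigma\) follows from Lemma~\ref{lemma:surgery-lemma-new}(1) in the surgery regions, together with the original exhaustion elsewhere. For the convergence condition near \(q \in Q\), \(\widehat F_k = F_k\) suffices; near each \(p \in \Sigma\), Lemma~\ref{lemma:surgery-lemma-new}(4) combined with (iii) and (v) gives \(\widehat F_k(B_{X^{\vee P}}(p, r_p)) \subset B_N(F(p), \widehat R_{p,k})\), so \(\widehat F_k \to F(p)\) uniformly on \(B_{X^{\vee P}}(p, r_p)\). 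The open-set compact-orbit condition is inherited in \(X\)-strata from the original sequence, and on each bubble \(\bS^n_p\) it holds near the south pole by the same uniform convergence. Proposition~\ref{prop:concentration-of-mass} then produces a subsequence \((\widehat F_{k_j})_{j\in\N}\) that is asymptotically \((K,\omega)\)-quasiregular and converges singularly to a \(K\)-quasiregular \(\omega\)-curve \(\widehat F\); set \(\widehat \mu := \mlim^\omega_{j\to\infty} \widehat F_{k_j}\).

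Finally I would verify the four conditions of Definition~\ref{def:nodal-resolution}. Condition (1) holds by construction. For (2), \(\slim \widehat F_j|_X = F\) follows by combining local uniform convergence \(\widehat F_{k_j} = F_{k_j} \to F\) on \(X \setminus \Sigma\) with the uniform convergence to \(F(p)\) near each \(p \in \Sigma\) established above. Condition (4), \(\Sing(\widehat \mu) \cap X = \emptyset\), is then immediate since locally uniform convergence on \(X\) precludes atoms of \(\widehat \mu\) in \(X\). The crux is condition (3), \((\pi_{\widehat X, X})_* \widehat \mu = \mu\): on \(X \setminus \Sigma\) the measures agree, and at each \(p \in \Sigma\) one must show \(\widehat \mu(\bS^n_p) = \mu(\{p\})\). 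Decomposing the bubble as \(\bS^n_p = (\bS^n_p \setminus B(-e_{n+1}, r_{p,k})) \cup B(-e_{n+1}, r_{p,k})\), the mass on the large hemisphere equals \(\int_{B_M(p, r_{p,k})} \star F_k^{*}\omega\) up to the \((1+\varepsilon_k)\)-quasiconformal distortion of \(\varphi^{+}_{M,p,r_{p,k}}\), which tends to \(\mu(\{p\})\) by (iv), while the mass on the small cap is dominated by Lemma~\ref{lemma:surgery-lemma-new}(3) combined with (ii) and hence vanishes. The main technical obstacle is precisely this simultaneous diagonal selection of \(r_{p,k}, r'_{p,k}, \widehat R_{p,k}\) subject to (i)--(v), reconciling the geometric surgery constraints of Lemma~\ref{lemma:surgery-lemma-new} with the measure-theoretic convergence (iv) and the fixed-neighborhood uniform convergence requirement secured by (v).
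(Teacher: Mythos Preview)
Your proposal is essentially correct and follows the same approach as the paper: choose surgery radii via a diagonal argument, apply the surgery lemma (Lemma~\ref{lemma:surgery-lemma-new}) at each $p\in\Sigma$ to construct $(\widehat F_k)_k$ on $X^{\vee P}$, verify the hypotheses of Proposition~\ref{prop:concentration-of-mass} to extract a singularly converging subsequence, and check the nodal pre-resolution conditions. The paper additionally splits into a manifold case (Step I) and a general nodal case (Step II) handled by a second diagonal argument over strata; you work directly, which is fine.

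There is, however, one imprecise claim. You assert that $\widehat F_k(B_{X^{\vee P}}(p, r_p)) \subset B_N(F(p), \widehat R_{p,k})$ and hence $\widehat F_k \to F(p)$ uniformly on the \emph{full} ball $B_{X^{\vee P}}(p, r_p)$. This is correct on the bubble part $B_{\bS^n_p}(-e_{n+1}, r_p)$ --- indeed, this is exactly what your condition (v) is designed to guarantee --- but on the $M$-part $B_M(p, r_p)$ it is false: there $\widehat F_k = F_k$ outside the shrinking surgery ball, so the limit is $F$, not the constant $F(p)$. Showing $\widehat F_k \to F$ uniformly on $\overline{B_M(p,s)}$ is more delicate, because one must control $F_k$ on the \emph{moving} annulus $A_M(p; r_{p,k}, s)$, and locally uniform convergence on $M\setminus \Sigma$ alone does not suffice (a priori $F_k$ could oscillate just outside $B_M(p,r_{p,k})$). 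The paper handles this by making the diagonal choice \emph{nested}: it fixes an increasing sequence $(k^p_m)_m$ so that $F_k(A_M(p; r'_{p,m}, R_{p,m})) \subset B_N(F(p), 1/(2m))$ holds for \emph{all} $k \ge k^p_m$, with outer radii $R_{p,m}\downarrow 0$. These overlapping annuli telescope to cover $A_M(p; r_{p,m^p_k}, R_{p,1})$, forcing $F_k \to F$ uniformly near $p$ on the $M$-side. Your conditions (i)--(v) control only the single annulus $A_M(p; r'_{p,k}, r_{p,k})$; to close the argument you need to strengthen the diagonal selection to include this nested control on a shrinking but larger outer scale.
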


\begin{remark}
The mapping \(\widehat F_j\) in the nodal pre-resolution \((\widehat F_j)_{j\in\N}\) is obtained as a nodal surgery over \(P\) of the mapping $F_{k_j}$. 
Additionally, by construction the sequence $( \widehat F_j)_{j\in \N}$ has the property that its singular sets $\Sing(\mlim^\omega_{j \to \infty} \widehat F_j)$ are contained in the upper hemispheres of the bubbles in $\Strata_X(X^{\vee P})$.
\end{remark}

\begin{proof}[Proof of Proposition \ref{prop:first-surgery-nodal-manifold}]
We prove the claim first in the special case of a manifold; this special case is applied in the second step of the proof to the strata of the nodal manifold $X$.

\bigskip
\noindent
{\bf Step I: Special case.} Let $M$ be a connected, oriented, and Riemannian $n$-manifold.
We begin by fixing some notation for the proof. Let $F= \slim_{k \to \infty} F_k$ and $\mu = \mlim^\omega_{k \to \infty} F_k$. Set $P=\Sing(\mu)$ and let $\mu_k = \star F_k^*\omega$ for each $k\in \N$. 
For each $p\in P$, we fix a radius $R_p>0$ for which the balls $B_M(p,R_p)$ are mutually disjoint.
We also fix, for each $p\in P$ and $m\in \N$, 
\[
R_{p,m} = (1/2) \min\{ \QCrad_{1/m}(M,p), R_p/m\}.
\]
For completeness, we set $R_{p,0}=R_p$.

For each $p\in P$ and $m\in \N$, we also set $r_{p,m} = R_{p,m}/m$ and let $r'_{p,m} = \rho_{r_{p,m}}< r_{p,m}$, that is, the radius for which $\varphi^+_{p,r_{p,m}}(B_M(p,r'_{p,m}))$ is the upper hemisphere of $\bS^n_p$. For completeness, we also define $r_{p,0}=R_p/2$, and $r'_{p,0}= R_p/4$. For these radii, we define  
\[
B_{p,m} = B_M(p,R_{p,m}) \text{ and } A'_{p,m} = A_M(p; r'_{p,m}, R_{p,m}). 
\]

Let $p\in P$. We fix a sequence $(k^p_m)_{m\in \N}$ of indices associated to the concentration of mass at $p$ as follows. Recall that, since $F_k$ converges singularly to $F$, we have that $\mu \llcorner B_M(p,R_p) = (\star F^*\omega)\llcorner B_M(p,R_p)$$ + \mu(\{p\}) \delta_p$, where $\delta_p$ is the Dirac mass at $p$. Since $\mu(\{p\})>0$ and $(\star F^*\omega)(B_M(p,r)) \to 0$ as $r\to 0$, we have that
\[
\frac{\mu(B_M(p,r)\setminus \{p\})}{\mu(B_M(p,r))} \to 0
\]
as $r\to 0$. Furthermore, by the continuity of $F$, $\diam(F(B_M(p,r)) \to 0$ as $r \to 0$.

Since $\mu_k \weakto \mu$ in $B_M(p,R_p)$, there exists an increasing sequence $(k^p_m)_{m\in \N}$ of indices, where $k^p_0=0$, having the property that, for each $m\ge 1$, 
\[
\frac{\mu_k(A'_{p,m})}{\mu_k(B_{p,m})} \le \frac{1}{m}
\quad \text{and} \quad 
F_k(A'_{p,m}) \subset B_N\left(F(p), \frac{\BLrad_1(N)}{2m}\right)
\]
for $k\ge k^p_m$. Next we define $(m^p_k)_{k\in \N}$ to be the sequence for which $m^p_k$ is the largest index $m$ satisfying $k^p_m \le k < k^p_{m+1}$. Clearly, $m^p_k \to \infty$ as $k\to \infty$ for each $p\in P$. We also denote $\varepsilon_k = 1/m^p_k$ for each $k\in \N$.

\bigskip

We define a sequence $(F'_k \colon M^{\vee P} \to N)_{k\in \N}$ by taking local nodal surgeries of maps $F_k$ at points of $P$ as follows. Let $k\in \N$ and $p\in P$. We set the restriction $F'_k|_{B_{p,m^p_k}} \colon B_{p,m^p_k} \to N$ to be the nodal surgery of $F_k|_{B_{p,m^p_k}}$ from Lemma \ref{lemma:surgery-lemma-new} given by the radii $r'= r'_{p,m^p_k}$, $r = r_{p,m^p_k}$, $R=R_p$, $\widehat R = \BLrad_1(N)/(2m^p_k)$, and $q=F(p)$. Now let $\Omega_k =\bigcup_{p\in P} (B_M(p,R_p) \cup \bS^n_p) \subset M^{\vee P}$ be the open set, where $\widehat F_k$ is already defined. Then $M^{\vee P} \setminus \Omega_k = M\setminus \Omega_k$. Since $F'_k|_{\Omega_k \cap M}$ agrees with $F_k|_{\Omega_k \cap M}$ in a neighborhood of $\partial \Omega_k$ in $\Omega_k \cap M$, we obtain a continuous Sobolev map in $W^{1,n}_\loc(M^{\vee P},N)$ by defining $F'_k|_{M^{\vee P}\setminus \Omega_k} = F_k|_{M^{\vee P}\setminus \Omega_k}$.

To analyze the nodal surgeries at individual points $p\in P$, we observe first that, by the nodal surgery given by Lemma~\ref{lemma:surgery-lemma-new} and a choice of the radius $r'_{p,m^p_k}$, we have the estimate
\[
\norm{DF'_k}_{L^n(X_{p,k})} \le C(n) (1+\varepsilon_k)^{1/n} \left( \norm{DF_k}_{L^n(A'_{p,m_k})} 
+ \log\left( \frac{r_{p,m^p_k}}{r'_{p,m^p_k}}\right)^{\frac{1-n}{n}} \right) \to 0 
\]
as $k\to \infty$, where \(X_{p,k} = B_M(p,R_{m^p_k}) \vee \Sbb^n_p\). Thus the sequence $(F'_k \colon M^{\vee P}\to N)_{k\in \N}$ is locally equibounded. Additionally, by \eqref{item:sl-4} in Lemma~\ref{lemma:surgery-lemma-new}, we have that 
\[
F'_k(B_{X_{p,k}}(p, r_{p, m_k^p})) \subset B_N(F(p), \BLrad_1(N)/(2m^p_k))
\]
for each \(k \in \N\) and \(p \in P\). Since \(\BLrad_1(N)/(2m^p_k)\to 0\) as \(k \to \infty\) and \((F_k)_{k \in \N}\) converges singularly to \(F\), we obtain that \((F'_k)_{k \in \N}\) converges locally uniformly at \(p\). In particular, the sequence $(F'_k)_{k\in\N}$ does not concentrate mass at $p$.

By the nodal surgery, we also have that the sequence $(F'_k|_{\bS^n_p})_{k\in \N}$ converges in a neighborhood of $p=-e_{n-1}$ locally uniformly, and for any $\varepsilon>0$, and any neighborhood $V_p$ of $p$ in $\bS^n_p$, there exists $k_p\in \N$ for which the maps $F'_k|_{\bS^n_p\setminus \overline{V_p}}$ are $(1+\varepsilon)K$-quasiregular $\omega$-curves for $k\ge k_p$. Since the restrictions $F'_k|_M \colon M\to N$ converge locally uniformly to $F=\slim_{k \to \infty} F_k$, we conclude that, for each relatively compact open set $\Omega \subset X$ and $\widehat \Omega = \pi_{M^{\vee P},M}^{-1}(\Omega)$, the sequence of restrictions $(F'_k|_{\widehat \Omega} \colon \widehat \Omega \to N)_{k\in \N}$ admits a $(K,\omega)$-quasiregular exhaustion; this follows immediately from the finiteness of the set $P\cap \Omega$ and existence of an exhaustion in each sphere $\bS^n_p$. Note that, due to nodal surgery, the sequence $(F'_k)_{k\in \N}$ converges locally uniformly in a neighborhood of \(P\).
By Proposition~\ref{prop:norm-fams-hold-qrcs}, the sequence $(F'_k)_{k\in \N}$ has compact orbits in the sense that each stratum $S$ of $\widehat X$ contains an open set $U_S \subset S$, whose orbit $\{ F'_k U_S\}_{k\in \N}$ is contained in a compact set.

We fix now an exhaustion $U_1 \subset \overline{U_1} \subset U_2 \subset \cdots \subset X$ by relatively compact open sets and denote $\widehat U_\ell = \pi_{M^{\vee P},M}^{-1}(U_j)$ for each $j\in \N$. By Proposition \ref{prop:concentration-of-mass}, the sequence $(F'_k|_{\widehat U_\ell} \colon \widehat U_\ell \to N)_{k\in \N}$ has an asymptotically $(K,\omega)$-quasiregular subsequence for each $\ell\in \N$. Thus, by passing to a diagonal subsequence if necessary, we obtain an asymptotically $(K,\omega)$-quasiregular subsequence $(F'_{k_j} \colon M^{\vee P}\to N)_{j\in \N}$ of $(F'_k)_{k\in \N}$, which converges singularly to a $K$-quasiregular $\omega$-curve $M^{\vee P}\to N$. By the choice of the radii $r'_{p,m}$, we further have that the singular parts of the measure $\mlim^\omega_{j \to \infty} F'_{k_j}$ is contained in the union of upper hemispheres of the bubbles $\bS^n_p$ for $p\in P$. We set $\widehat F_j = F'_{k_j}$ for each $j\in\N$.

It remains to check that $(\widehat F_j \colon M^{\vee P} \to N)_{j\in \N}$ is a nodal pre-resolution of $(F_k)_{k\in \N}$. By construction $M^{\vee P}$ is a bubble tree over $M$. By nodal surgery and singular convergence, we have that $\slim_{j \to \infty} \widehat F_j|_M = \slim_{k \to \infty} F_k$ and $(\pi_{M^{\vee P},M}\mlim^\omega_{j \to \infty} \widehat F_j)(\bS^n_p) = (\mlim^\omega_{k\to \infty}F_k)(\{p\})$ for each $p\in P$. Thus $(\pi_{M^{\vee P},M})_*(\mlim^\omega_{j\to \infty} \widehat F_j) = \mlim^\omega_{k \to \infty} F_k$. Finally, since \(\Sing(\mlim^\omega_{j \to \infty} \widehat F_j)\) is contained in the upper hemispheres in the bubbles in \(\Strata_M(M^{\vee P})\), 
we have that $\Sing(\mlim^\omega_{j \to \infty} \widehat F_j) \cap M = \emptyset$.

\bigskip

\noindent {\bf Step II: General case.} We move to the second part of the proof, and consider the case where $X$ is a connected, oriented, and Riemannian nodal $n$-manifold, as in the statement. We denote $\mu = \mlim^\omega_{k\to \infty} F_k$. For each stratum $M\subset X$, we have that 
\[
\slim_{k\to \infty} F_k|_M = F|_M,
\quad
\mlim^\omega_{k\to \infty} F_k|_M = \mu \llcorner M,
\quad
\text{and}
\quad
P_M = \Sing(\mu \llcorner M).
\]

Since $X$ has countably many strata, by passing to a diagonal subsequence, we obtain a subsequence $(F_{k_j})_{j\in \N}$ of $(F_k)_{k\in \N}$ having the property that, for each $j\in \N$ and $M\in \Strata(X)$, there exists a singularly converging asymptotically \((K,\omega)\)-quasiregular sequence $(\widehat F^M_j \colon M^{\vee P_M} \to N)_{j\in\N}$ which is a nodal pre-resolution of \((F_{k_j}|_M)_{j\in\N}\) where each \(\widehat F^M_j\) is a nodal surgery of \(F_{k_j}|_M\) over \(P_M\) and for which \(\Sing (\mlim^\omega_{k\to \infty} \widehat F^M_j)\) is contained in the upper hemispheres of the bubbles in \(\Strata_M(M^{\vee P_M})\). 
This all follows from Step I of this proof. We combine these to construct a nodal pre-resolution of \((F_k)_k\).

Let $\widehat F^M \colon M^{\vee P_M} \to N$ be the singular limit $\widehat F^M = \slim_{j \to \infty} \widehat F^M_j$ of the sequence $(\widehat F^M_j)_{j\in \N}$ and let $\mu^M = \mlim^\omega_{j\to \infty} \widehat F^M_j$.
We take $X^{\vee P}$ to be the nodal manifold 
\[
X^{\vee P} = \left( \coprod_{M\in \Strata(X)} M^{\vee P_M} \right) \Big / \sim
\]
as in \eqref{eq:general-XveeP}. In what follows, we denote $\widehat X = X^{\vee P}$. Thus \(\widehat X\) is a bubble tree over \(X\) and so satisfies condition~\eqref{item:resolution-1} of Definition~\ref{def:nodal-resolution}.

Now for each \(j \in \N\) and \(M \in \Strata(X)\), set \(\widehat F_j|_{M^{\vee P_M}} = \widehat F^M_j\) and \(\widehat F|_{M^{\vee P_M}} = \widehat F^M\). 
Then these are well-defined mappings \(\widehat X \to N\) since \(\widehat F_j^M |_{M \cap \Sing(X)} = \widehat F_j|_{M \cap \Sing(X)}\), and \(\widehat F^M |_{M \cap \Sing(X)} = \widehat F|_{M \cap \Sing(X)}\) for all \(M \in \Strata(X)\).
Thus \((\widehat F_j)_j\) converges singularly to \(\widehat F\). 
In particular, for \(M \in \Strata(X)\), \(\slim_{j\to\infty} \widehat F_j^M |_{M} = \slim_{j\to\infty} F_{k_j} |_{M}\), and hence \(\slim_{j\to\infty} \widehat F_j |_{X} = \slim_{j\to\infty} F_{k_j}\).
Thus condition~\eqref{item:resolution-2} of Definition~\ref{def:nodal-resolution} is satisfied.

Observe that since each \(\widehat F^M_j\) is a nodal surgery of \(F_{k_j}|_M\) over \(P_M\) and recalling that the definition of nodal surgery is local on each stratum, we obtain that each \(\widehat F_j\) is a nodal surgery of \(F_{k_j}\) over \(P\).

Additionally since for each \(M \in \Strata\), the singularities \(\Sing (\mu^M)\) are contained in the upper hemispheres of the bubbles in \(\Strata_M(M^{\vee P_M})\), also for \(\widehat \mu = \mlim^\omega_{j\to\infty} \widehat F_j\), \(\Sing(\widehat \mu)\) is contained in the upper hemispheres of the bubbles in \(\Strata_X(\widehat X)\). Thus condition~\eqref{item:resolution-4} of Definition~\ref{def:nodal-resolution} is also satisfied.

For condition \eqref{item:resolution-3} of Definition~\ref{def:nodal-resolution}, recall that $\widehat X$ is a nodal manifold having $X$ and each $M^{\vee P_M}$ as nodal submanifolds. We also have that the nodal projection $\pi_{\widehat X,X} \colon \widehat X\to X$ is uniquely defined by the restrictions $\pi_{\widehat X, X}|_{M^{\vee P_M}} = \pi_{M^{\vee P_M}, M}$ for $M\in \Strata(X)$. 
We observe that $\widehat\mu = \mlim^\omega_{j\to \infty} \widehat F_j$ satisfies 
\begin{align*}
\widehat\mu\llcorner (\widehat X \setminus X) 
&= \sum_{S \in \Strata_{\widehat X}(X)} \widehat\mu \llcorner (S\setminus X) 
= \sum_{M\in \Strata(X)} \sum_{S \in \Strata(M^{\vee P_M})} \widehat\mu \llcorner (S\setminus M) \\
&= \sum_{M\in \Strata(X)} \sum_{S \in \Strata(M^{\vee P_M})} \mu^M \llcorner S 
\end{align*}
and
\[
\widehat\mu \llcorner X = \widehat \mu \llcorner (X\setminus \Sing(X)) 
= \sum_{M\in \Strata(X)} \widehat\mu \llcorner (M\setminus S_M) 
= \sum_{M\in \Strata(X)} \mu^M \llcorner M
\]
for each $M\in \Strata(X)$. Thus
\[
\widehat\mu  = \sum_{M\in \Strata(X)} \mu^M.
\]

Recall that for each \(M \in \Strata(X)\), \((\widehat F^M_j)_{j\in\N}\) is a nodal pre-resolution of \((F_{k_j}|_M)_{j\in\N}\) and thus satisfies
\[
(\pi_{M^{\vee P_M}, M})_*(\mu^M) = (\pi_{M^{\vee P_M}, M})_*(\mlim^\omega_{j \to \infty} \widehat F^M_j) = \mlim^\omega_{j \to \infty} F_{k_j}|_M.
\]
Thus we have have that
\begin{align*}
(\pi_{\widehat X, X})_* (\mlim^\omega_{j\to \infty} \widehat F_j) 
&= \sum_{M\in \Strata(X)} (\pi_{\widehat X,X})_*(\mu^M) \\
&= \sum_{M\in \Strata(X)} (\pi_{M^{\vee P_M}, M})_*(\mu^M) \\
&= \sum_{M\in \Strata(X)} \mlim^\omega_{j\to\infty} F_{k_j}|_M
= \mlim^\omega_{j\to\infty} F_j.
\end{align*}
Hence \((\widehat F_j)_{j \in \N}\) satisfies condition~\eqref{item:resolution-3}, and therefore is a nodal pre-resolution of \((F_{k_j})_{j \in \N}\).

Since the sequence $(\widehat F_j^M)_{j\in \N}$ is locally equibounded for each stratum \(M \in \Strata(X)\), with data independent of the strata, we conclude that also the sequence $(\widehat F_j)_{j\in \N}$ is locally equibounded.
This concludes the proof.
\end{proof}

%%%%%%%%%%%%%%%%%%%%%%%%%%%%%%%%%%%%%%%%%%%%%%%%
%%%%%%%%%%%%%%%%%%%%%%%%%%%%%%%%%%%%%%%%%%%%%%%%
%%%%%%%%%%%%%%%%%%%%%%%%%%%%%%%%%%%%%%%%%%%%%%%%

\section{Renormalization of nodal pre-resolutions}

In the previous section, we showed that we can construct a nodal pre-resolution $(\widehat F_j \colon X^{\vee P} \to N)_{j\in \N}$ of an asymptotically \((K,\omega)\)-quasiregular sequence $(F_k \colon X \to N)_{k\in N}$ using nodal surgery. 
In this section, we show that the sequence $(\widehat F_j)_{j\in \N}$ may be renormalized i.e.~$(\widehat F_j)_{j\in \N}$ may be modified in the bubbles \(\Strata_{X}(X^{\vee P})\) in such a way that the mass of \(\mlim^\omega_{j\to\infty} \widehat F_j\) is redistributed, resulting in a nodal pre-resolution with quantitatively smaller atoms. We state this as follows.

\begin{proposition}
\label{prop:global-renormalization}
Let $2\le n \le m$, let $X$ be a connected, oriented Riemannian nodal $n$-manifold, 
and let $(N,\omega)$ be an \(n\)-calibrated Riemannian \(m\)-manifold having bounded geometry. 
Let \((\widehat F_j \colon X^{\vee P} \to N)_{j \in \N}\) be a locally equibounded and asymptotically \((K,\omega)\)-quasiregular sequence, which is a nodal pre-resolution of $(F_k \colon X \to N)_{k\in \N}$, where 
\[
P = \left(\Sing(\mlim^\omega_{k \to \infty} F_k|_{M})\right)_{M \in \Strata(X)}.
\] 
Then there exists a locally equibounded and asymptotically $(K,\omega)$-quasiregular sequence 
\[
(\widetilde F_\ell\colon X^{\vee P} \to N)_{\ell\in \N},
\]
which is a nodal pre-resolution of $(F_k \colon X \to N)_{k\in \N}$, and satisfies
\begin{equation}
\label{eq:split-of-atom}
(\mlim^\omega_{\ell\to \infty} \widetilde F_\ell)(\{ \tilde p \}) \le (9/10) (\mlim^\omega_{j \to \infty} \widehat F_j)(\bS^n_p)
\end{equation}
for each $M\in \Strata(X)$, $p\in P_M = \Sing(\mlim^\omega_{k\to\infty} F_k|_M)$, and $\tilde p\in \bS^n_p$.  
\end{proposition}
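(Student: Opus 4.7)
The plan is a bubble-by-bubble conformal renormalization followed by a diagonal extraction. Fix a bubble $S = \bS^n_p \in \Strata_X(X^{\vee P})$, let $E_S = \widehat\mu(S)$ where $\widehat\mu = \mlim^\omega_{j\to\infty} \widehat F_j$, and note that $\Sing(\widehat\mu)\cap S$ is finite by Proposition~\ref{prop:concentration-of-mass} and the energy gap Theorem~\ref{thm:analytic-energy-gap}. If no atom on $S$ exceeds $\tfrac{9}{10}E_S$, leave $\widehat F_j|_S$ unchanged. Otherwise there is a unique ``bad'' atom $\tilde p_S \in S$ with $\widehat\mu(\{\tilde p_S\}) >\tfrac{9}{10}E_S$; we pick an auxiliary point $\tilde p'_S \in S \setminus (X \cup \Sing(\widehat\mu))$ (which exists since $X \cap S = \{p\}$ and $\Sing(\widehat\mu)\cap S$ is finite) and let $(\phi^S_\lambda)_{\lambda > 0}$ be the one-parameter family of Möbius self-maps of $S \cong \bS^n$ fixing both $\tilde p_S$ and $\tilde p'_S$; in conformal coordinates sending $\tilde p_S \mapsto 0$ and $\tilde p'_S \mapsto \infty$, $\phi^S_\lambda$ is the dilation $y\mapsto\lambda y$. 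Each is $1$-quasiconformal, so by Proposition~\ref{lem:pre-comp-qrc-with-qr} precomposition with it preserves $K$-quasiregularity. The crucial feature of this choice is that the ``lost'' mass flows to $\tilde p'_S$ rather than to the attachment point $p \in X$, so condition~(4) of nodal pre-resolution is never violated.

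Fix $r_0>0$ so that $B_S(\tilde p_S, 2r_0)$ and $B_S(\tilde p'_S, 2r_0)$ are disjoint and avoid $p$. For each sufficiently large $j$, the intermediate value theorem applied to the continuous monotone function $\lambda\mapsto \widehat F_j^*\omega(\phi^S_\lambda(B_S(\tilde p_S,r_0)))$, which runs from $\approx 0$ to $\approx E_S$ as $\lambda$ varies over $(0,\infty)$, yields a unique $\lambda_j$ with $\widehat F_j^*\omega(\phi^S_{\lambda_j}(B_S(\tilde p_S, r_0))) = E_S/2$; because the old mass at $\tilde p_S$ exceeds $\tfrac{9}{10}E_S$, necessarily $\lambda_j\to 0$. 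Define $\widetilde F_j := \widehat F_j \circ \phi^S_{\lambda_j}$ on each bad bubble and $\widetilde F_j := \widehat F_j$ elsewhere. The key bounds follow by change of variables and monotonicity of mass in $r$:
\[
  \widetilde\mu(\{\tilde p_S\}) \le E_S/2, \qquad \widetilde\mu(\{\tilde p'_S\}) \le E_S/2,
\]
the second following because $\widetilde F_j^*\omega(S \setminus B_S(\tilde p_S, r_0)) = E_{S,j} - E_S/2 \to E_S/2$ gives an upper bound for any small ball around $\tilde p'_S$. For any $y \in S\setminus\{\tilde p_S,\tilde p'_S\}$, the image $\phi^S_{\lambda_j}(B_S(y,r))$ is a ball of vanishing diameter, and by local $L^n$-integrability of $\widehat F_j$ the mass there tends to $0$; in particular $\widetilde\mu(\{p\}) = 0$, preserving condition~(4). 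Conditions~(1)--(3) of nodal pre-resolution hold because $\widetilde F_j|_X = \widehat F_j|_X$, the bubble tree is unchanged, and each $\phi^S_{\lambda_j}$ preserves the total $\omega$-mass on $S$.

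The main obstacle is preserving asymptotic $(K,\omega)$-quasiregularity. On a renormalized bubble, the new singular set is $\{\tilde p_S, \tilde p'_S\}$, and the $\phi^{S,-1}_{\lambda_j}$-image of the old ``bad region'' of radius $\epsilon_j$ around $\tilde p_S$ has diameter of order $\epsilon_j/\lambda_j$ near $\tilde p_S$ while its remaining part collapses to $\tilde p'_S$. One first passes to a subsequence of $(\widehat F_j)_j$ forcing $\epsilon_j/\lambda_j \to 0$ (possible because the $(K,\omega)$-quasiregular exhaustion of $(\widehat F_j)_j$ allows $\epsilon_j$ to be arranged to shrink arbitrarily fast) and then runs a diagonal argument over the countably many renormalized bubbles and over a compact exhaustion of $X^{\vee P}$, producing a single sequence $(\widetilde F_\ell)_\ell$ admitting a $(K,\omega)$-quasiregular exhaustion about the discrete set $\bigcup_S \{\tilde p_S, \tilde p'_S\}$.
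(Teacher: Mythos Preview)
Your approach contains a genuine gap in the key step. You claim that for any $y\in S\setminus\{\tilde p_S,\tilde p'_S\}$ the mass $\widetilde F_j^*\omega(B_S(y,r))=\widehat F_j^*\omega(\phi^S_{\lambda_j}(B_S(y,r)))$ tends to $0$ because the image ball has vanishing diameter. This invokes absolute continuity of a \emph{single} $\widehat F_j^*\omega$, but the relevant control must be uniform in $j$, and the sequence $(\widehat F_j^*\omega)_j$ is precisely concentrating at $\tilde p_S$. Concretely, suppose $\widehat F_j^*\omega$ on $S$ is (essentially) supported in a tiny ball about a point $a_j\to\tilde p_S$ with $a_j\ne\tilde p_S$; this is exactly what happens after nodal surgery when the original sequence concentrates at a moving point. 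Your intermediate-value choice forces $\lambda_j\sim d_S(a_j,\tilde p_S)/r_0$, and then $\widetilde F_j^*\omega=(\phi^S_{\lambda_j})^*\widehat F_j^*\omega$ is essentially supported near $(\phi^S_{\lambda_j})^{-1}(a_j)$, which subconverges to a point on $\partial B_S(\tilde p_S,r_0)$. The limiting measure $\widetilde\mu$ then carries an atom of mass $E_S$ at that boundary point, violating \eqref{eq:split-of-atom}. In short: a one-parameter dilation with a \emph{fixed} axis $\{\tilde p_S,\tilde p'_S\}$ cannot see the \emph{direction} from which the mass approaches $\tilde p_S$, so the renormalized atom can land anywhere on the separating sphere with full mass.

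The paper's proof repairs exactly this defect: it centers the dilation at the \emph{center of mass} $\cm(\mu^p_j)$ of the $j$-th measure, which tracks the actual concentration point $a_j$. Proposition~\ref{prop:affine-mass-redistribution} (via Lemma~\ref{lem:centre-mass-meas-ball}) then guarantees the uniform bound $A^p_{j*}\mu^p_j(B(z,1/4))\le\|\mu^p_j\|-\delta$ for \emph{every} $z$, which is precisely what rules out a new large atom at an unpredicted location. Your half-mass/IVT idea would work if you first translated so that the center of mass of $\widehat F_j^*\omega|_S$ sits at $\tilde p_S$ (i.e.\ let the axis depend on $j$); but as written, the argument fails. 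A minor side remark: your worry about $\epsilon_j/\lambda_j$ for the quasiregular exhaustion is unnecessary, since in stereographic coordinates the preimage $(\phi^S_{\lambda_j})^{-1}(B_S(p,\epsilon_j))$ already shrinks to $\tilde p'_S$ by the spherical geometry without any relation between $\epsilon_j$ and $\lambda_j$.
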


\begin{remark}
The renormalization in Proposition \ref{prop:global-renormalization} is given by a sequence $\left( h_j \colon X^{\vee P} \to X^{\vee P}\right)_{j\in \N}$ of $1$-quasiconformal mappings satisfying $h_j|_X = \id$ and we may take a subsequence of $(\widehat F_j \circ h_j)_{k\in \N}$ to be the sequence $(\widetilde F_\ell)_{\ell\in \N}$. Note that, since each $h_j$ is the identity on $X$, we have that curves $\widehat F = \slim_{j \to \infty} \widehat F_{j}$ and $\widetilde F = \slim_{\ell\to \infty} \widetilde F_{\ell}$ agree on $X$. 
\end{remark}

\subsection{Renormalization in the Euclidean space}

For the proof of Proposition~\ref{prop:global-renormalization}, we begin by recalling a redistribution result for measures in the Euclidean space discussed in 
\cite{pankka-souto2023bubbleqr}.

\newcommand{\cm}{\mathrm{cm}}

Let $\mu$ be a non-atomic measure on $\R^n$ with \(\int_{\R^n}\abs{x}\dn\mu(x) < \infty\) and whose support $\spt(\mu)$ is not contained in an affine line. The \emph{center of mass $\cm(\mu) \in \R^n$ of \(\mu\)} is the unique minimum of the function
\[
\Phi_{\mu}\colon \R^n \to \R,\ x \mapsto \int_{\R^n}\abs{x - z}\dn\mu(z).
\]
The conditions on \(\mu\) ensure that \(\Phi_{\mu}\) is well-defined and has a unique minimum. For the details, see the paragraph following \cite[Proposition~3.3]{pankka-souto2023bubbleqr}. As the name suggests, heuristically, the center mass $\cm(\mu)$ of $\mu$ is located where the measure has most mass. The following formulation of this heuristic principle suffices for us.

\begin{lemma}[{\cite[Lemma 3.4]{pankka-souto2023bubbleqr}}]
\label{lem:centre-mass-meas-ball}
Let $\mu$ be a finite non-atomic measure on $\R^n$ with \(\int_{\R^n}\abs{x}\dn\mu(x) < \infty\) and whose support is not contained in a line. If $\mu(B^n(0,1)) > \frac{3}{4} \mu(\R^n)$, then $\cm(\mu) \in B^n(0,2)$. 
\end{lemma}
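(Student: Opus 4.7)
The plan is to argue by contradiction: suppose $c := \cm(\mu)$ satisfies $\abs{c} \geq 2$ and derive a contradiction from the first order optimality condition for the strictly convex minimization problem defining $\cm(\mu)$. The hypotheses on $\mu$ (finite, non-atomic, support not on a line, $\int\abs{x}\dn\mu < \infty$) guarantee, as indicated in the discussion following \cite[Proposition 3.3]{pankka-souto2023bubbleqr}, that $\Phi_\mu$ is strictly convex and differentiable, with
\[
\nabla \Phi_\mu(x) = \int_{\R^n} \frac{x-z}{\abs{x-z}}\dn\mu(z);
\]
here the non-atomicity of $\mu$ controls the singularity of the integrand at $z = x$. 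Consequently, the (unique) minimizer $c$ is characterized by $\nabla \Phi_\mu(c) = 0$.

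Assuming for contradiction that $\abs{c} \geq 2$, set $\hat c := c/\abs{c}$ and take the inner product of the vanishing gradient with $\hat c$ to obtain the scalar identity
\[
0 \;=\; \int_{\R^n} \frac{\langle c-z,\hat c\rangle}{\abs{c-z}}\dn\mu(z).
\]
The integrand is bounded above by $1$ and below by $-1$ pointwise. The key observation is that on $B^n(0,1)$ it admits a uniform positive lower bound: for $z\in B^n(0,1)$, Cauchy--Schwarz gives $\langle z,\hat c\rangle \leq \abs{z} \leq 1$, hence
\[
\langle c-z,\hat c\rangle \;=\; \abs{c} - \langle z,\hat c\rangle \;\geq\; \abs{c} - 1 \;\geq\; 1,
\]
while $\abs{c-z}\leq \abs{c}+1$, so that
\[
\frac{\langle c-z,\hat c\rangle}{\abs{c-z}} \;\geq\; \frac{\abs{c}-1}{\abs{c}+1} \;\geq\; \frac{1}{3}
\]
for every $z\in B^n(0,1)$, using $\abs{c}\geq 2$.

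Combining these two bounds, splitting the integral over $B^n(0,1)$ and its complement, and invoking the mass hypothesis $\mu(B^n(0,1)) > \tfrac{3}{4}\mu(\R^n)$, which simultaneously forces $\mu(\R^n\setminus B^n(0,1)) < \tfrac{1}{4}\mu(\R^n)$, we estimate
\[
0 \;=\; \int_{\R^n} \frac{\langle c-z,\hat c\rangle}{\abs{c-z}}\dn\mu(z) \;\geq\; \tfrac{1}{3}\mu(B^n(0,1)) - \mu(\R^n\setminus B^n(0,1)) \;>\; \tfrac{1}{3}\cdot\tfrac{3}{4}\mu(\R^n) - \tfrac{1}{4}\mu(\R^n) \;=\; 0,
\]
a contradiction. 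Hence $\abs{c} < 2$, i.e.\ $\cm(\mu) \in B^n(0,2)$. The only subtle step is justifying differentiation of $\Phi_\mu$ under the integral at the minimizer $c$ (so that the gradient identity genuinely holds even when $c\in\spt(\mu)$); this is where non-atomicity is used, and the bookkeeping is already carried out in the cited reference, so the argument reduces to the elementary geometric estimates above.
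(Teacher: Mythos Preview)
Your proof is correct. The paper does not supply its own proof of this lemma---it is quoted verbatim from \cite[Lemma~3.4]{pankka-souto2023bubbleqr}---so there is no in-paper argument to compare against; your first-order optimality argument via the gradient identity $\nabla\Phi_\mu(c)=0$ and the elementary estimate $\frac{\abs{c}-1}{\abs{c}+1}\ge\frac{1}{3}$ on $B^n(0,1)$ is clean and complete.
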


A basic redistribution result for a measure concentrated near the origin reads as follows; see \cite[Proposition 3.3]{pankka-souto2023bubbleqr}. We recall a simple proof for the readers convenience. 

\begin{proposition}
\label{prop:affine-mass-redistribution}
  Let $\mu$ be a finite non-atomic measure on $B^n(0,1) \subset \R^n$, whose support is not contained in a line. 
    Suppose there exists $0 < \epsilon < \frac{1}{8}$ and \(0 < \gamma < \frac{1}{8}\norm{\mu}\) for which \(\mu(B^n(0,\epsilon)) \geq \norm{\mu} - \gamma\).
   Then for $\gamma < \delta < \frac{1}{8}\norm{\mu}$, there exists $\Lambda>\frac{1}{3\epsilon}$, a conformal map 
   \[
   A \colon \R^n \to \R^n,\ x\mapsto \Lambda(x - \cm(\mu)) + \cm(\mu)
   \]
   and \(R = 2\Lambda - 1\), which satisfy the following properties:
  \begin{enumerate}
  \item $\supp A_*\mu \subset \bar B^n(0,R)$
  \item \(A_{*}\mu (B^n(0,R)\setminus B^n(0,1+2\epsilon)) \leq 2\delta\), and
  \item \(A_{*}\mu (B^n(z,1/4)) \leq \norm{\mu} - \delta\) for all \(z \in B^n(0,R)\).
  \end{enumerate}
\end{proposition}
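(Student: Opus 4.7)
The plan is to select the dilation factor $\Lambda$ so that a ball carrying almost all of $\mu$ pulls back into $B^n(0,1+2\epsilon)$ (for (2)), while simultaneously making $\tfrac{1}{4\Lambda}$ small enough that no ball of that radius captures more than $\norm{\mu}-\delta$ of the mass (for (3)). The center of mass $c := \cm(\mu)$ plays a double role: it defines $A$ and pins down the geometry of preimages via Lemma~\ref{lem:centre-mass-meas-ball}.

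The setup has two ingredients. Since $\gamma < \tfrac{1}{8}\norm{\mu}$, the hypothesis gives $\mu(B^n(0,\epsilon)) > \tfrac{3}{4}\norm{\mu}$, so applying Lemma~\ref{lem:centre-mass-meas-ball} after rescaling by $\epsilon$ yields $c \in B^n(0,2\epsilon)$. Next, since $\mu$ is non-atomic and compactly supported, a standard compactness argument shows that $\Psi(r) := \sup_{w \in \R^n} \mu(\bar B^n(w,r))$ satisfies $\Psi(r) \to 0$ as $r \to 0^+$; hence there exists a critical radius $r^* > 0$ with $\Psi(r) \le \norm{\mu}-\delta$ for all $r \le r^*$, together with a limit point $w^* \in \R^n$ attaining $\mu(\bar B^n(w^*, r^*)) \ge \norm{\mu}-\delta$. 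I then set
\[
\Lambda := \max\!\bigl(\tfrac{1}{3\epsilon},\, \tfrac{1}{4r^*}\bigr)(1+\eta)
\]
for a small $\eta > 0$ to be chosen below; this ensures $\Lambda > \tfrac{1}{3\epsilon}$ and $\tfrac{1}{4\Lambda} < r^*$. Property~(1) then follows from $|A(x)| \le \Lambda|x| + (\Lambda-1)|c| \le 2\Lambda - 1$ for $|x| \le 1$, using $|c| \le 2\epsilon \le 1$. Property~(3) is immediate from the definition of $r^*$: $A_*\mu(B^n(z,1/4)) = \mu(B^n(A^{-1}(z), \tfrac{1}{4\Lambda})) \le \Psi(\tfrac{1}{4\Lambda}) \le \norm{\mu}-\delta$.

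The main obstacle is property~(2), since pushing $\Lambda$ large enough for (3) shrinks the preimage $A^{-1}(B^n(0,1+2\epsilon)) = B^n(c(1-1/\Lambda), (1+2\epsilon)/\Lambda)$ and can pull mass out of $B^n(0, 1+2\epsilon)$. I plan to handle this in two regimes. If $r^* \ge \tfrac{3\epsilon}{4}$, then $\Lambda$ sits just above $\tfrac{1}{3\epsilon}$, and a direct computation using $|c| \le 2\epsilon$ shows that $B^n(0,\epsilon) \subset A^{-1}(B^n(0,1+2\epsilon))$ provided $\eta \le 4\epsilon$, so the preimage carries $\mu$-mass at least $\norm{\mu}-\gamma$. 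If instead $r^* < \tfrac{3\epsilon}{4}$, I apply Lemma~\ref{lem:centre-mass-meas-ball} to the high-mass ball $\bar B^n(w^*,r^*)$ to deduce $|c - w^*| \le 2r^*$; the triangle inequality then reduces the inclusion $\bar B^n(w^*, r^*) \subset A^{-1}(B^n(0,1+2\epsilon))$ to the comparison $3r^* \le 4r^*$, which leaves enough slack to absorb the perturbation $\eta \le \tfrac{1}{3}$. In either case the preimage has $\mu$-mass at least $\norm{\mu}-\delta$, so $A_*\mu(B^n(0,R) \setminus B^n(0,1+2\epsilon)) \le \delta < 2\delta$. The tension between the lower bound on $\Lambda$ forced by (3) and the upper bound forced by (2) is exactly resolved by combining the non-atomicity of $\mu$ (producing $r^*$) with Lemma~\ref{lem:centre-mass-meas-ball} (pinning $c$ near any high-mass ball).
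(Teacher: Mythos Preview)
Your proof is correct but takes a genuinely different route from the paper's. The paper first translates so that $\cm(\mu)=0$ and then invokes a separate scaling lemma (Lemma~\ref{lemma:scaling-mass-redistribution}): there $\Lambda$ is chosen by the intermediate value theorem so that the annulus $B^n(0,\Lambda)\setminus B^n(0,1)$ carries exactly $2\delta$ of the pushed-forward mass, which gives (2) directly, and then (3) is obtained by contradiction using Lemma~\ref{lem:centre-mass-meas-ball}. You invert this logic: you introduce the concentration function $\Psi(r)=\sup_w\mu(\bar B^n(w,r))$, pick $\Lambda$ just above $\max(1/(3\epsilon),1/(4r^*))$ where $r^*$ is the threshold radius for $\Psi$, so that (3) is automatic, and then handle (2) by a two-case analysis depending on which term in the maximum dominates, again via Lemma~\ref{lem:centre-mass-meas-ball}. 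Both arguments rest on the same center-of-mass lemma but deploy it at different stages; the paper's version isolates the scaling step cleanly, while yours is more constructive for (3) and avoids the contradiction. One small point: your assertion that $\Psi(r)\le\norm{\mu}-\delta$ for all $r\le r^*$ may fail at $r=r^*$ if $\Psi$ jumps there, but since you only evaluate $\Psi$ at $1/(4\Lambda)<r^*$ this does not affect the argument.
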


The proof consists of two parts: a proof for the special case $\cm(\mu) = 0$ and an affine conjugation of this result. We discuss the redistribution first as a separate lemma.
\begin{lemma}\label{lemma:scaling-mass-redistribution}
  Let $\mu$ be a finite non-atomic measure on $B^n(0,1) \subset \R^n$, whose support is not contained in a line, and with $\cm(\mu) = 0$. 
  Suppose there exists $0 < \epsilon < 1$ and 
  \(0 < \gamma < \frac{1}{8}\norm{\mu}\) for which 
   \(\mu(B^n(0,\epsilon)) \geq \norm{\mu} - \gamma\).
   Then, for $\gamma < \delta < \frac{1}{8}\norm{\mu}$, there exists $\Lambda > \frac{1}{\epsilon}$ and an affine map $T \colon \R^n \to \R^n$, $x\mapsto \Lambda x$,
   satisfying the following properties:
  \begin{enumerate}
  \item \(T_{*}\mu (B^n(0,\Lambda)\setminus B^n(0,1)) = 2\delta\) and
  \item \(T_{*}\mu (B^n(z,1/4)) \leq \norm{\mu} - \delta\) for all \(z \in \R^n\).
  \end{enumerate}
\end{lemma}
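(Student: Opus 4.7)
The plan is to take $T$ to be a pure dilation $T(x) = \Lambda x$, choose $\Lambda$ from the radial distribution function of $\mu$ so that conclusion~(1) becomes an algebraic identity, and then use the hypothesis $\cm(\mu)=0$ together with Lemma~\ref{lem:centre-mass-meas-ball} to rule out any quarter-ball carrying more than $\norm{\mu}-\delta$ of the total mass.

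First, I introduce the radial distribution function $f(r) = \mu(\bar B^n(0,r))$. It is non-decreasing with $f(0^+)=0$, $f(r) = \norm{\mu}$ for $r \ge 1$, and $f(\epsilon) \ge \norm{\mu}-\gamma$. Since $\gamma < \delta < \norm{\mu}/8$, the target value $\norm{\mu} - 2\delta$ lies strictly between $0$ and $f(\epsilon)$, so by the intermediate value theorem I choose $r^* \in (0,\epsilon)$ with $f(r^*) = \norm{\mu} - 2\delta$ and set $\Lambda = 1/r^* > 1/\epsilon$. Then $T_*\mu$ is supported in $\bar B^n(0,\Lambda)$, has total mass $\norm{\mu}$, and its centre of mass equals $T(\cm(\mu)) = 0$ by linearity of the scaling. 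Conclusion~(1) is then immediate:
\[
T_*\mu(B^n(0,\Lambda) \setminus B^n(0,1)) = \norm{\mu} - T_*\mu(B^n(0,1)) = \norm{\mu} - \mu(B^n(0,1/\Lambda)) = 2\delta.
\]

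For conclusion~(2), I argue by contradiction. Suppose $T_*\mu(B^n(z,1/4)) > \norm{\mu} - \delta$ for some $z \in \R^n$. Since $\delta < \norm{\mu}/8$, this quantity exceeds $\frac{3}{4}\norm{T_*\mu}$, so applying Lemma~\ref{lem:centre-mass-meas-ball} to the affine rescaling that sends $B^n(z,1/4)$ to the unit ball yields $\cm(T_*\mu) \in B^n(z,1/2)$. But $\cm(T_*\mu)=0$, so $|z|<1/2$ and hence $B^n(z,1/4) \subset B^n(0,1)$, giving
\[
T_*\mu(B^n(z,1/4)) \le T_*\mu(B^n(0,1)) = \norm{\mu} - 2\delta < \norm{\mu} - \delta,
\]
a contradiction.

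The one subtle point I expect to handle carefully is the exact equality in conclusion~(1): non-atomicity of $\mu$ does not forbid individual spheres from carrying positive mass, so $f$ may have at most countably many jumps. This is addressed by selecting $r^*$ at a continuity point of $f$ (the set of which is dense) and absorbing the resulting arbitrarily small discrepancy into the strict inequalities $\gamma < \delta < \norm{\mu}/8$; every downstream estimate is robust under such a perturbation.
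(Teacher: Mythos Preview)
Your approach is essentially identical to the paper's: define the radial mass function, pick $\Lambda$ by the intermediate value theorem so that (1) holds, and obtain (2) by contradiction via Lemma~\ref{lem:centre-mass-meas-ball}. Your contradiction for (2) is a harmless reordering of the paper's---you go from $|z|<1/2$ to $B^n(z,1/4)\subset B^n(0,1)$, while the paper goes from $B^n(z,1/4)\not\subset B^n(0,1)$ to $|z|>1/2$.

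Regarding your final paragraph: the paper does not share your worry. It simply asserts that its function $\Psi(\lambda)=\mu(B^n(0,1)\setminus B^n(0,1/\lambda))$ is continuous ``since $\mu$ has no atoms'' and applies the intermediate value theorem directly. Your observation that non-atomicity alone does not prevent spheres from carrying mass is correct, and your proposed workaround (pick a continuity point, accept an arbitrarily small error in (1)) would not recover the exact equality stated. In practice this is immaterial: in the only place this lemma is invoked (Proposition~\ref{prop:global-renormalization}), the measures $\mu^p_j=\star(\widehat F_j\circ\sigma_p^{-1})^*\omega$ are absolutely continuous with respect to Lebesgue measure, so spheres have zero mass, $\Psi$ is genuinely continuous, and both your argument and the paper's go through without modification.
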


\begin{proof}
  Consider the function
  \[
  \Psi \colon [1,\infty) \to [0,\norm{\mu}),\ \lambda \mapsto \mu\big( B^n(0,1) \setminus B^n(0, 1/\lambda) \big).
  \]
  Since \(\mu\) has no atoms, \(\Psi\) is continuous. 
  Also observe that \(\Psi(1) = 0\) and \(\Psi(\lambda) \to \norm{\mu}\) as \(\lambda \to \infty\). Thus there exists \(\Lambda > 1\) for which 
  \(\Psi(\Lambda) = 2\delta > 0\).
  Let \(T \colon \R^n \to \R^n\) be the affine map 
  \(x \mapsto \Lambda x\).
Then
  \begin{equation}\label{eq:scaling-mass-redist} 
    2\delta = \Psi(\Lambda) 
    = \mu\big( B^n(0,1) \setminus B^n(0, 1/\Lambda) \big) 
    = T_{*}\mu(B^n(0,\Lambda)\setminus B^n(0,1)).
  \end{equation}
  Thus (1) holds. In addition, we have that
  \[
  2\delta = \norm{\mu} - \mu(B^n(0, 1/\Lambda)) 
  \leq \mu(B^n(0, \epsilon)) + \gamma - \mu(B^n(0, 1/\Lambda)). 
  \]
  Thus
  \[
  \mu(B^n(0,\epsilon)\setminus B^n(0,1/\Lambda)) = 2\delta - \gamma > \delta > 0,
  \]
  which implies that $\Lambda>1/\epsilon$.
  
  We prove (2) by contradiction. Suppose there exists 
  \(z \in \R^n\) for which 
  $T_*\mu(B n(z,1/4))>\norm{\mu}-\delta$. Then
  \[
  \mu(B^n(z/\Lambda,1/(4\Lambda)) 
  = T_{*}\mu(B^n(z,1/4)) 
  \geq \norm{\mu} - \delta \geq \norm{\mu} - 2\delta 
  \geq \frac{3}{4}\norm{\mu} > 0.
  \]
  Since $T$ is a scaling, measures \(T_{*}\mu\) and \(\mu\) have the same center of mass at the origin.
  Observe also that \(\norm{\mu} = \norm{T_{*}\mu}\).
  By a translation and scaling, Lemma~\ref{lem:centre-mass-meas-ball} gives that 
  \(0 \in B^n(z,1/2)\).
  From \eqref{eq:scaling-mass-redist}, we obtain 
  \(\norm{\mu} - 2\delta = \mu(B^n(0,1/\Lambda)) > 0 \)
  which implies
  \(T_{*}\mu(B^n(z,1/4)\setminus B^n(0,1)) > 0\). Hence $B^n(z,1/4) \not \subset B^n(0,1)$ implying 
  \(\abs{z} > 1/2\). Thus \(0 \not\in B^n(z,1/2)\), which is a contradiction.
\end{proof}

\begin{proof}[Proof of Proposition~\ref{prop:affine-mass-redistribution}]
  Let $x_0 = \cm(\mu)$.
  Since \(\gamma < \frac{1}{8}\norm{\mu}\), Lemma~\ref{lem:centre-mass-meas-ball} gives that \(x_0 \in B^n(0,2\epsilon)\).
  Let \(S\colon \R^n\to \R^n\), \(x \mapsto x - x_0\), be the translation by \(-x_0\). Then \(S_{*}\mu\) has center of mass at the origin and \(\norm{S_{*}\mu} = \norm{\mu}\).
  Since \(\abs{x_0} < 2\epsilon\), observe that \(B^n(0,\epsilon) \subset B^n(x_0,3\epsilon)\) and \(3\epsilon < \frac{3}{8} < 1\). Then
  \[S_{*}\mu (B^n(0,3\epsilon)) = \mu(B^n(x_0,3\epsilon)) \geq \mu(B^n(0,\epsilon)) \geq \frac{3}{4}\norm{\mu}.\]
  Applying Lemma~\ref{lemma:scaling-mass-redistribution} to \(S_{*}\mu\) with \(3\epsilon\) and \(\gamma\), gives a \(\delta\), that there exists \(\Lambda \geq \frac{1}{3\epsilon}\) and a scaling map \(T\colon \R^n\to\R^n\), \(x\mapsto \Lambda x\), for which
  \(T_{*}S_{*}\mu\big(B^n(0,\Lambda)\setminus B^n(0,1)\big) = 2\delta\), and
  \(T_{*}S_{*}\mu\big(B^n(z,1/4)) \leq \norm{\mu} - \delta \) for all \(z \in \R^n\).
  
  Setting 
  \( A := S^{-1} \circ T \circ S \)
  and
  \( R := 2\Lambda - 1 \geq 2\epsilon (\Lambda - 1) + \Lambda\),
  we see that 
  \[
  A (B^n(0,1)) = B^n((1 - \Lambda)x_0, \Lambda) \subset B^n(0,R)\quad \text{and}\quad
  \norm{A_{*}\mu} = \norm{T_{*}S_{*}\mu} = \norm{\mu}.
  \]
  Since
  \begin{align*}
  \norm{A_{*}\mu} &\geq A_{*}\mu (B^n(0,R)) \geq A_{*}\mu (A(B^n(0,1))) 
  = \mu (B^n(0,1)) = \norm{\mu} = \norm{A_{*}\mu},
  \end{align*}
  we have that \(A_{*}\mu\big (B^n(0,R)\setminus A(B^n(0,1))\big) = 0\).  Thus (1) follows.

  Since \(\abs{x_0} \leq 2\epsilon\), observe that \(B^n(0,1) \subset B^n(x_0, 1+2\epsilon)\). Hence 
  \[ 
  B^n(0,\Lambda) \setminus B^n(x_0, 1+2\epsilon) \subset B^n(0,\Lambda) \setminus B^n(0,1)
  \]
  and 
  \[
  T_*S_*\mu (B^n(0,\Lambda) \setminus B^n(x_0, 1+2\epsilon)) \leq T_*S_*\mu (B^n(0,\Lambda) \setminus B^n(0, 1)) = 2\delta.
  \]
  Thus property (2) follows from \(\supp T_*S_*\mu \subset \bar B^n(0,\Lambda)\) and the inequality
  \begin{align*}
      A_*\mu(B^n(0,R)\setminus B^n(0,1 + 2\epsilon)) &= A_*\mu(A(B^n(0,1))\setminus B^n(0,1 + 2\epsilon)) \\
      &\leq T_*S_*\mu (B^n(0,\Lambda) \setminus B^n(x_0, 1+2\epsilon)) \leq 2\delta.
  \end{align*}

  Since \(z - x_0 \in \R^n\) for all \(z \in \R^n\), we see that 
  \[ A_*\mu (B^n(z,1/4) = T_*S_*\mu(B^n(z - x_0)) \leq \norm{\mu} - \delta.\]
  Property (3) now follows.
\end{proof}

\subsection{Proof of Proposition \ref{prop:global-renormalization}}

As in the statement, let $(\widehat F_j \colon X^{\vee P} \to N)_{j\in \N}$ be an asymptotically $(K,\omega)$-quasiregular sequence converging singularly to a $K$-quasiregular $\omega$-curve $\widehat F\colon X^{\vee P} \to N$ which is a nodal pre-resolution of the sequence \((F_k \colon X \to N)_{k\in\N}\). Also set $\widehat \mu = \mlim^\omega_{j\to \infty} \widehat F_j = \lim_{k\to \infty} \star \widehat F^*_j \omega$ and 
recall that \(P = (P_M)_{M \in \Strata(X)}\), where 
\[P_M = \Sing(\mlim^\omega_{k\to\infty} F_k|_M) \quad \text{for} \ M \in \Strata(X),\]
is a family of discrete sets.
As before, we denote by $\sigma \colon \bS^n \to \overline{\R^n}$ the stereographic projection satisfying $\sigma(e_{n+1}) = 0$ and $\sigma(-e_{n+1}) = \infty$; here $\overline{\R^n} = \R^n \cup \{\infty\}$. 

As a preliminary step, we show that we may assume each set $\widehat P_S = \Sing(\widehat \mu \llcorner S) = \Sing(\mlim^\omega_{j \to \infty} \widehat F_j|_{S})$ is either empty or consists of the north pole $e_{n+1}$ of the bubble $S \in \Strata_X(X^{\vee P})$. 
For this, let $M\in \Strata(X)$ be a stratum of $X$. 
We may assume that, for each $p\in P_M$ and $\tilde p\in \bS^n_p$ satisfying $\widehat \mu(\{\tilde p\}) > (9/10)\widehat \mu(\bS^n_p)$, we have $\tilde p = e^p_{n+1}:= e_{n+1}\in \bS^n$. Indeed, for each $p\in P_M$, there exists at most one point $\tilde p\in \bS^n_p$ satisfying this condition. 
For $p\in P_M$, let $\tilde p\in \bS^n_p$ be this unique point if such exists; otherwise we set $\tilde p = e_{n+1}\in \bS^n_p$ if such a point does not exist. 
For each \(p \in p_M\), let \(T_p \colon \overline{\R^n} \to \overline{\R^n}\) be the translation $x\mapsto x + \sigma(\tilde p)$ and let $\rho_p \colon \overline{\R^n} \to \overline{\R^n}$ be a scaling $x\mapsto \Lambda_p x$ for some \(\Lambda_p > 0\) for which
\begin{equation}
\label{eq:almost-all-mass}
(\rho_p \circ T_p \circ \sigma)_*\widehat \mu(B^n(0,1/10)) > (99/100)\widehat \mu(\bS^n_p).
\end{equation}
Then let $\tau_p \colon \bS^n_p \to \bS^n_p$ be the conjugation $\tau_p = \sigma^{-1} \circ \rho_p \circ T_p \circ \sigma$. Now we may define a \(1\)-quasiconformal homeomorphism $\tau_M \colon M^{\vee P_M} \to M^{\vee P_M}$ by the formulas $\tau_M|_M = \id$ and $\tau|_{\bS^n_p} = \tau_p$ for $p\in P_M$.

Since the nodal manifold $M^{\vee P_M}$ is a nodal submanifold of $X^{\vee P}$ and $X^{\vee P} = (\coprod_{M\in \Strata(X)} M^{\vee P_M})\big/{\sim}$, the mapping $\tau \colon X^{\vee P} \to X^{\vee P}$, given by the formula $\tau|_{M^{\vee P_M}} = \tau_M$ for each $M\in \Strata(X)$, is well-defined. Since the maps $\tau_M$ are $1$-quasiconformal homeomorphisms, so is $\tau$. Furthermore, 
\begin{align*}
\left( \mlim^\omega_{j\to \infty} (\widehat F_j \circ \tau) \right)(\{e^p_{n+1}\}) &= \left( \lim_{j\to \infty} (\widehat F_j \circ \tau_p)^*\omega\right)(\{e^p_{n+1}\}) \\
&= \left( \lim_{j\to \infty} \tau_p^* (\widehat F_j)^* \omega \right)(\{e^p_{n+1}\}) \\
&=\left( (\tau_p^{-1})_*(\mlim^\omega_{j\to \infty}\widehat F_j)\right)(\{e^p_{n+1}\}) \\
&= (\mlim^\omega_{j\to \infty} \widehat F_j))(\tau_p\{e_{n+1}\}) = \widehat \mu( \{\tilde p\}).
\end{align*}
Since $\tau \colon X^{\vee P} \to X^{\vee P}$ is $1$-quasiconformal, we may -- by passing to a subsequence of $(\widehat F_j \circ \tau)_{j\in \N}$ if necessary -- assume that the sets $\widehat P_S$ are either empty or consist solely of the north poles $e_{n+1}$ of bubbles $S \in \Strata_X(X^{\vee P})$ and that the measure $\widehat\mu$ in each bubble $S \in \Strata_M(M^{\vee P_M})$ is concentrated near the north pole in the sense of \eqref{eq:almost-all-mass}.

We now construct a sequence $(h_j \colon X^{\vee P} \to X^{\vee P})_{k\in \N}$ stratum-wise, as we did for the mapping $\tau$. 
Let $M\in \Strata(X)$ and suppose that for $p\in P_M$ we have $\widehat \mu(\{\tilde p\})>(9/10)\widehat \mu(\{\tilde p\})$. In this case, we first define for each $j\in \N$, a measure $\mu^p_j = \star (\widehat F_j \circ \sigma_p^{-1})^* \omega \llcorner \bar B^n$. Since $(\mu^p_j)_{j\in \N}$ converges weakly to the measure $\left( (\sigma_p^{-1})_*\widehat \mu \right)\llcorner \bar B^n$, there exists $j_0\in \N$ for which $\mu^p_j(B^n(0,1/10)) > (95/100)\mu^p_j(\R^n)$ for each $j\ge j_0$. Thus, by Proposition \ref{prop:affine-mass-redistribution}, there exists a conformal (affine) map $A^p_j \colon \R^n \to \R^n$ satisfying both
\begin{equation}
\label{eq:A-mu-bound-1}
(A^p_j)_*\mu^p_j(\R^n\setminus B^n(0,12/10)) \le (2/10)\norm{\mu^p_j}
\end{equation}
and
\begin{equation}
\label{eq:A-mu-bound-2}
(A^p_j)_*\mu^p_j(B^n(x,1/4)) \le (89/100)\norm{\mu^p_j}.
\end{equation}
We define $\alpha^p_j = \sigma_p^{-1} \circ A^p_j \circ \sigma_p^{-1} \colon \bS^n_p \to \bS^n_p$. 

We are now ready to define $h_j \colon X^{\vee P} \to X^{\vee P}$ by the formulas $h_j|_X = \id$, $h_j|_{\bS^n_p} = (\alpha^p_j)^{-1}$ if $\widehat \mu(\{\tilde p\}) > (9/10) \widehat \mu(\bS^n_p)$, and $h_j|_{\bS^n_p} = \id$ otherwise. Also let $\widetilde F_j = \widehat F_j \circ h_j \colon X^{\vee P} \to N$ for each $j\in \N$.
Since Proposition \ref{prop:norm-fams-hold-qrcs} ensures the pre-compact orbits in each stratum, we have, by Proposition~\ref{prop:concentration-of-mass}, that the sequence $(\widetilde F_j)_{j\in \N}$ has a singularly converging subsequence $(\widetilde F_{j_\ell})_{\ell\in \N}$.
Clearly $(\widetilde F_{j_\ell})_{\ell \in \N}$ is a nodal pre-resolution of $(F_k)_{k \in \N}$.

It remains to verify \eqref{eq:split-of-atom}. Let $\widetilde \mu = \mlim^\omega_{\ell\to \infty} \widetilde F_{j_\ell}$. Let $M\in \Strata(X)$ and $p\in P_M$. For $q=-e_{n+1}\in \bS^n_p$, we have, by \eqref{eq:A-mu-bound-1}, that 
\[
\widetilde \mu(\{q\}) \le (2/10) \norm{\mu^p_j} + (1/100)\widehat \mu(\bS^n_p) < (9/10)\widehat \mu(\bS^n_p).
\]
Similarly, by \eqref{eq:A-mu-bound-2}, we have for $q \in \bS^n_p\setminus \{-e_{n+1}\}$, that $\widetilde \mu (\{q\}) < (9/10) \widehat \mu(\bS^n_p)$. The claim follows.
\qed

%%%%%%%%%%%%%%%%%%%%%%%%%%%%%%%%%%%%%%%%%%%%%%%%
%%%%%%%%%%%%%%%%%%%%%%%%%%%%%%%%%%%%%%%%%%%%%%%%
%%%%%%%%%%%%%%%%%%%%%%%%%%%%%%%%%%%%%%%%%%%%%%%%

\section{Proof of the main theorem: Existence of nodal resolutions}

Using nodal resolutions, we are now ready to state and prove our main theorem. Observe that it is a reformulation of Theorem~\ref{thm:main-reconstruction} using our nodal resolutions formalism.

\begin{theorem}\label{prop:bubble-induction}
    Let \(2 \le n \le m\), $K\ge 1$, let \(X\) be a connected, oriented and Riemannian nodal \(n\)-manifold, let \((N,\omega)\) be an \(n\)-calibrated Riemannian \(m\)-manifold having bounded geometry. 
    Let \((F_k \colon X \to N)_{k \in \N}\) be a locally equibounded sequence of \(K\)-quasiregular \(\omega\)-curves for which there exists \(x_0 \in X\) such that the orbit \(\set{F_k(x_0) \mid k \in \N}\) has compact closure.
    Then there exists a subsequence \((F_{k_j})_{j\in \N}\) of \((F_k)_{k\in \N}\) which has a nodal resolution \((\widehat F_\ell \colon \widehat X \to N)_{\ell\in \N}\), where \(\widehat X\) is a bubble tree over \(X\) having the property that each component of $\widehat X\setminus X$ has finitely many strata. 
\end{theorem}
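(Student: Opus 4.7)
The plan is to iterate the nodal surgery from Proposition~\ref{prop:first-surgery-nodal-manifold} with the renormalization from Proposition~\ref{prop:global-renormalization}, renormalizing the singularity threshold at each step by reapplying Proposition~\ref{prop:concentration-of-mass}, and then to force termination along every chain of descendant bubbles via the $9/10$ contraction against the fixed lower threshold $E_N/K$. First I would verify the initialization hypothesis: local equiboundedness together with Corollary~\ref{cor:local-holder-cont} supplies local equicontinuity of $(F_k)_{k\in\N}$, and the precompact orbit at $x_0$ propagates via this equicontinuity along paths in the connected nodal manifold $X$ (as in the proof of Proposition~\ref{prop:norm-fams-hold-qrcs}), giving a precompact orbit in a neighborhood of every point of each stratum. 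This delivers the hypotheses of Proposition~\ref{prop:concentration-of-mass}, which applied with $E = E_N$ produces, after passing to a subsequence, an asymptotically $(K,\omega)$-quasiregular sequence $(F^{(0)}_j)_j$ that converges $(E_N/K)$-singularly to a $K$-quasiregular $\omega$-curve $F^{(0)} \colon X_0 := X \to N$ with discrete singular set $P_0 = \Sing(\mu^{(0)})$ for $\mu^{(0)} = \mlim^\omega_{j\to\infty} F^{(0)}_j$.

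I would then construct inductively a tower $X_0 \subset X_1 \subset X_2 \subset \cdots$ of bubble trees together with locally equibounded, asymptotically $(K,\omega)$-quasiregular sequences $(F^{(i)}_j \colon X_i \to N)_{j\in\N}$, setting $X_{i+1} = X_i^{\vee P_i}$ with $P_i = \Sing(\mu^{(i)})$ organized stratum-wise. The inductive step: apply Proposition~\ref{prop:first-surgery-nodal-manifold} to obtain a nodal pre-resolution, then Proposition~\ref{prop:global-renormalization} to ensure that every atom $\tilde p \in P_{i+1} \cap \bS^n_p$ satisfies $\mu^{(i+1)}(\{\tilde p\}) \le (9/10)\,\mu^{(i)}(\{p\})$, and finally reapply Proposition~\ref{prop:concentration-of-mass} with $E = E_N$ so that every surviving atom of $\mu^{(i+1)}$ has mass at least $E_N/K$. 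By transitivity of the nodal pre-resolution property, and since the renormalizing homeomorphisms of Proposition~\ref{prop:global-renormalization} restrict to the identity on $X_i$, each $(F^{(i+1)}_j)_j$ remains a nodal pre-resolution of a subsequence of $(F_k)_{k\in\N}$.

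For termination, let $\delta = E_N/K$ and $M_0 = \mu^{(0)}(X) < \infty$. Along any chain $p_0 \in P_0,\ p_1 \in P_1 \cap \bS^n_{p_0},\ \ldots$ of ancestor--descendant atoms, the $9/10$ contraction gives $\mu^{(i)}(\{p_i\}) \le (9/10)^i M_0$, while the lower bound $\mu^{(i)}(\{p_i\}) \ge \delta$ forces $i \le \log(M_0/\delta)/\log(10/9)$. Combined with the level-wise bound $\#P_i \le M_0/\delta$ coming from mass conservation, every component of $\widehat X := \bigcup_i X_i$ above a point of $X$ contains only finitely many strata, so $\widehat X$ is a bubble tree with the required local finiteness. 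A standard diagonal extraction then produces a sequence $(\widehat F_\ell)_\ell$ of maps $\widehat X \to N$ whose singular limit $\mlim^\omega \widehat F_\ell$ has empty singular set; since a nodal pre-resolution with empty singular set converges locally uniformly, $(\widehat F_\ell)_\ell$ is the desired nodal resolution of a subsequence of $(F_k)_k$. The main obstacle I anticipate is precisely this diagonal extraction: each step passes to a further subsequence, so the construction must be organized so that the restriction of $(\widehat F_\ell)$ to any $X_i$ whose iteration has already stabilized agrees, for all large $\ell$, with a stably converging tail of $(F^{(i)}_j)_j$ on that stratum. This should be manageable because both surgery and renormalization are supported in shrinking neighborhoods of the current singular set, leaving large prescribed compact subsets of each $X_i$ untouched across deeper iterations, which permits a standard Cantor-diagonal choice.
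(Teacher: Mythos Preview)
Your proposal follows the paper's architecture: iterate Propositions~\ref{prop:first-surgery-nodal-manifold} and~\ref{prop:global-renormalization}, use the $9/10$ contraction against a fixed positive lower threshold on atom masses to terminate each descendant chain, and extract diagonally (your anticipated obstacle there is real and is handled just as you outline). The one structural difference is the choice of threshold. The paper takes $E = K\varepsilon_0$ with $\varepsilon_0$ the spherical energy gap of Theorem~\ref{thm:analytic-energy-gap}, iterates indefinitely, and then invokes that theorem to show that the diagonal limit $\widetilde G$ is constant on any bubble of mass below $\varepsilon_0$, so those bubbles can be pruned after the fact. You instead take $E = E_N$ and argue that Proposition~\ref{prop:concentration-of-mass} forces every surviving atom to have mass at least $E_N/K$, so the iteration simply produces no further bubbles once $(9/10)^i\mu^{(0)}(\{p_0\})$ drops below that. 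Your route bypasses Theorem~\ref{thm:analytic-energy-gap} entirely, which is a mild simplification; the paper's route has the side benefit of exhibiting that subcritical bubbles carry constant limit maps.

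One slip to fix: you set $M_0 = \mu^{(0)}(X) < \infty$, but $X$ is not assumed compact and the sequence is only \emph{locally} equibounded, so $\mu^{(0)}(X)$ can be infinite, and the global count $\#P_i \le M_0/\delta$ then fails. The repair, which matches the paper, is to argue one initial atom $p_0 \in P_0$ at a time: replace $M_0$ by the finite quantity $\mu^{(0)}(\{p_0\})$ in the chain bound $\mu^{(i)}(\{p_i\}) \le (9/10)^i \mu^{(0)}(\{p_0\})$, obtaining a finite depth $k_{p_0}$ depending on $p_0$. This is precisely the ``each component of $\widehat X \setminus X$ has finitely many strata'' conclusion, and no global finiteness of $P_i$ is claimed or needed.
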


\begin{proof}
Recall that by Theorem~\ref{thm:Ikonen-Holder}, \(N\) has a small energy bound \(E_N > 0\). 
Let \(\epsilon_0 = \epsilon_0(N, \bS^n,K,E_N,\omega) > 0\) be the energy gap from Theorem~\ref{thm:analytic-energy-gap} for \(K\)-quasiregular \(\omega\)-curves \(\bS^n \to N\).
Since $(F_k)_k$ is a sequence of $K$-quasiregular $\omega$-curves, it trivially admits a \((K,\omega)\)-quasiregular exhaustion about \(\emptyset \subset X\). Since \(N\) is complete and \(X\) is connected, by the Hopf--Rinow theorem for each \(x \in X\), the orbit \(\set{F_k(x) \mid k \in \N}\) has compact closure in \(N\); see the proof of Proposition~\ref{prop:norm-fams-hold-qrcs} for a similar argument. 
Applying Proposition~\ref{prop:concentration-of-mass}, with data $Q=\emptyset$ and \(E = K\epsilon_0\),  gives the existence of an asymptotically $(K,\omega)$-quasiregular subsequence \((F_{k_j})_{j\in\N}\) of \((F_k)_k\) and which converges $\epsilon_0$-singularly to a \(K\)-quasiregular \(\omega\)-curve \(F \colon X \to N\).

We begin to resolve the singularities of $(F_{k_j})_j$. Set \(\mu = \mlim^\omega_{j\to \infty}F_{k_j}\). Then \(\Sing(\mu)\) is a countable discrete set. 
Set also \(\widetilde F_j^{(0)} := F_{k_j}\) for $j\in \N$, \(\widetilde \mu_0 = \mlim^\omega_{j\to \infty} \widetilde F_j^{(0)}\), 
\[
P_0 = \left( \Sing(\mlim^\omega_{j\to \infty} \widetilde F_j^{(0)}|_M)\right)_{M\in \Strata(X)},
\]
and \(\widetilde X_0 := X\).
Then, by Propositions~\ref{prop:first-surgery-nodal-manifold} and \ref{prop:global-renormalization}, there exist a bubble tree \(\widetilde X_1 = \widetilde X_0^{\vee P_0}\) over \(\widetilde X_0\), and a locally equibounded and asymptotically \((K,\omega)\)-quasiregular sequence \((\widetilde F^{(1)}_\ell \colon \widetilde X_1 \to N)_\ell\) which is a nodal pre-resolution of \((\widetilde F_j^{(0)})_j\) and for which the measure
\(\widetilde \mu_1 = \mlim^\omega_{\ell \to \infty} \widetilde F^{(1)}_\ell\)
satisfies
\[
  \widetilde \mu_1 (\tilde p) \leq (9/10) \widetilde \mu_0(p) \quad \text{for all}\ \tilde p \in \bS^n_p\ \text{and}\ p \in P_0.
\]

Assuming that our pre-resolution is not already a resolution, we continue to resolve the singularities of $(\widetilde F^{(1)}_\ell)_\ell$, that is,
\[
P_1 = \left( \Sing(\mlim^\omega_{\ell\to \infty} \widetilde F_\ell^{(1)}|_M)\right)_{M\in \Strata(\widetilde X_1)}.
\]
Again, since \(N\) is complete and \(\widetilde X_1\) is connected, by the Hopf--Rinow theorem for each \(x \in \widetilde X_1\), the orbit \(\set{\widetilde F^{(1)}_\ell(x) \mid \ell \in \N}\) has compact closure in \(N\).
Since \((\widetilde F^{(1)}_\ell)_{\ell \in \N}\) is a nodal pre-resolution of  \((\widetilde F^{(0)}_\ell)_{\ell \in \N}\), it converges locally uniformly in a neighborhood of \(\Sing_{\widetilde X_0}(\widetilde X_1)\), and hence we may apply Proposition~\ref{prop:concentration-of-mass} and continue as before.

Now, proceeding by induction, we obtain, for each $k\in \N$, a bubble tree \(\widetilde X_k = \widetilde X_{k-1}^{\vee P_{k-1}}\) over \(\widetilde X_{k-1}\) (and over \(X\)), and an asymptotically \((K,\omega)\)-quasiregular sequence \((\widetilde F^{(k)}_\ell \colon \widetilde X_k \to N)_\ell\), which is a nodal pre-resolution of \((\widetilde F_j^{(k-1)})_j\) (and hence
also of \((F_{k_j})_j\)). As before, the sequence $(\widetilde F^{(k)}_\ell)_\ell$ is locally equibounded, and satisfies
\[
\widetilde \mu_k (\hat p) \leq (9/10) \widetilde \mu_{k-1}(\pi_{\widetilde X_k,\widetilde X_{k-1}}(\hat p)) \leq \cdots \leq (9/10)^k \mu(p)
\]
for each $\hat p \in \pi_{\widehat X_k,X}^{-1}(p)$ and $p\in \Sing(\mu)$, where \(\widetilde \mu_k = \mlim^\omega_{\ell \to \infty} \widetilde F^{(k)}_\ell\).

Since $\widetilde X_{k-1}$ is a nodal submanifold of $\widetilde X_k$ for each $k\in \N$, the union $\widetilde X = \bigcup_{k\in \N} \widetilde X_k$ is a well-defined nodal manifold. We denote 
\[
\widetilde G^{(k)}_j = \widetilde F^{(k)}_j \circ \pi_{\widetilde X,\widetilde X_k} \colon \widetilde X \to N
\]
for each $k\in \N$ and $j\in \N$. Since $\widetilde X$ consists of countably many strata and, for each bubble $S$ of $\widetilde X$, there exists a subsequence $(\widetilde G^{(k_\ell)}_{j_\ell}|_{S})_\ell$ of $(\widetilde G^{(k)}_j)_{k,j}$ converging locally uniformly to a $K$-quasiregular $\omega$-curve $S\to N$, we may pass to a diagonal subsequence $(\widetilde G_\ell \colon \widetilde X\to N)_\ell$ of $(\widetilde F^{(k)}_j)_{k,j}$, which converges locally uniformly on each stratum of $\widetilde X$ to a $K$-quasiregular $\omega$-curve. Since $\widetilde X$ is a locally finite nodal manifold, i.e.~each point has a neighborhood which meets only finitely many strata, we conclude that $(\widetilde G_\ell)_\ell$ converges locally uniformly to $K$-quasiregular $\omega$-curve $\widetilde G\colon \widetilde X\to N$. 

We show next that the measure $\widetilde \mu = \mlim^\omega_{\ell \to \infty} \widetilde G_\ell$ has no atoms. Let $M\in \Strata(X)$ and $p\in P_M$. Since \(\mu(\{p\})<\infty\), there exists \(k_p \in \N\) for which \((9/10)^{k_p} \mu(p) < \epsilon_0/K\). Thus, for each \(S \in \Strata_{\widetilde X_{k_p- 1}}(\widetilde X_{k_p})\), we have that
\[
\int_{S} \widetilde G^*\omega = \lim_{\ell \to \infty} (\mlim^\omega \widetilde G_\ell)(S) \le (9/10)^{k_p} \mu(p) < \epsilon_0/K.
\]
Hence
\[
\int_S \norm{D\widetilde G}^n \le K \int_{S} \widetilde G^*\omega < \epsilon_0.
\]
Thus, by Theorem \ref{thm:analytic-energy-gap}, the restriction $\widetilde G|_S$ is a constant map. Thus, for the restriction $\widetilde G|_{\overline{Y}} \colon \overline{Y} \to N$, where $Y \subset \widetilde X\setminus \widetilde X_{k_p}$ is the component contained in $\pi_{\widetilde X,X}^{-1}(p)$, we have that
\[
\widehat \mu_{k_p+1}(\pi_{\widetilde X, \widetilde X_{k_p}}^{-1}(\overline{Y})) = \int_{\overline{Y}} \widetilde G^*\omega = 0. 
\]
Hence $\widetilde \mu_{k_p}$ has no atoms in the bubbles in $\widetilde X_{k_p}\cap \pi_{\widetilde X, X}^{-1}(p)$. Since we have $\mlim^\omega_{\ell\to \infty} \widetilde G_\ell|_{\widetilde X_k} = \widetilde \mu_k$ for each $k$, we conclude that $\widetilde \mu$ has no atoms. 

Finally, by removing the bubbles which are not contained in $\pi_{\widetilde X,X}^{-1}(p) \cap X_{k_p}$ for each $p\in \Sing(\mu)$, we obtain a nodal manifold $\widehat X\subset \widetilde X$ having finite bubble trees over each point $p\in \Sing(\mu)$. We define $\widehat F_\ell = \widetilde G_\ell|_{\widehat X} \colon \widehat X\to N$ for each $\ell\in \N$. Since the bubbles removed from $\widetilde X$ have zero $\mu$-measure, we have that the map $\widehat F = \widetilde G|_{\widehat X} \colon \widehat X\to N$ and the sequence $(\widehat F_\ell)_{\ell\in \N}$, satisfy the wanted properties. This concludes the proof.
\end{proof}

\section{Reformulations of the main theorem}
\label{sec:GH}

We reformulate Theorem \ref{thm:main-reconstruction} in two ways. The first reformulation is in terms of Gromov--Hausdorff convergence.
Recall that a mapping $\varphi\colon X\to Y$ between metric spaces $X$ and $Y$ is an \emph{$\varepsilon$-rough isometry for $\varepsilon\ge 0$} if 
\[
d_X(x,x') - \varepsilon \le d_Y(\varphi(x),\varphi(x')) \le d_X(x,x') + \varepsilon
\]
for $x,x'\in X$ and the image $fX$ is $\varepsilon$-dense in $Y$, that is, $B_Y(\varphi X,\varepsilon)=Y$. 

For the statement, we call a pair $(X,f\colon X\to Y)$, where $X$ and $Y$ are metric spaces, a \emph{mapping package}. A sequence $(X_k,f_k \colon X_k \to Y)_{k\in \N}$ of mapping packages \emph{Gromov--Hausdorff converges to a mapping package $(X,f)$, denoted $(X_k,f_k) \GHto (X,f)$}, if there exists a sequence $\varepsilon_k \to 0$ and, for each $k\in \N$, an $\varepsilon_k$-rough isometry $\psi_k \colon X \to X_k$ for which the sequence $(f_k \circ \psi_k \colon X\to Y)_{k\in\N}$ converges locally uniformly to $f$ as $k\to \infty$.
Recall that the existence of such sequence $(\psi_k \colon X \to X_k)_{k\in \N}$ is equivalent to Gromov--Hausdorff convergence of spaces $(X_k)_{k\in \N}$ to $X$.
We refer to Burago, Burago, and Ivanov \cite[Chapter 7]{Burago-Burago-Ivanov--book} and to Bridson and Haefliger \cite[Chapter I.5]{Bridson-Haefliger--book} for detailed discussions on Gromov--Hausdorff convergence.

In the following version of Theorem \ref{thm:main-reconstruction}, both $d_{g}$ and $d_X$ denote the length metrics of the nodal Riemannian manifold $(X,g)$.

\begin{theorem}[Gromov--Hausdorff convergence]
\label{thm:main-GH}
Let $2\le n \le m$, let $X$ be a connected, oriented Riemannian nodal $n$-manifold, let $(N,\omega)$ be an $n$-calibrated Riemannian $m$-manifold having bounded geometry, and $K\ge 1$. Let $(F_k \colon X\to N)_{k\in \N}$ be a locally equibounded sequence of $K$-quasiregular $\omega$-curves for which there exists \(x_0 \in X\) such that the orbit \(\set{F_k(x_0) \mid k \in \N}\) has compact closure.
Then there exists a subsequence $(F_{k_j})_{j\in \N}$ of $(F_k)_{k\in\N}$, a sequence of Riemannian metrics $(g_j)_{j\in \N}$ on $X$, a bubble tree $\widehat X$ over $X$, and a $K$-quasiregular $\omega$-curve $\widehat F \colon \widehat X \to N$ for which
\[
(X,d_{g_j},F_{k_j}) \GHto (\widehat X, d_{\widehat X}, \widehat F)
\]
as mapping packages, where $d_{g_j}$ is the distance of $g_j$ and $d_{\widehat X}$ the distance function of the bubble tree $\widehat X$.
\end{theorem}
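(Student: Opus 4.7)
The plan is to realize $\widehat X$ as a Gromov--Hausdorff limit of $(X,g_j)$ by conformally inflating shrinking neighborhoods of the bubbling points $P = \Sing_X(\widehat X) \cap X$ in $X$ and by constructing pinching-type maps $\psi_j \colon \widehat X \to X$ that invert the iterative nodal surgery producing $\widehat X$. First, I apply Theorem \ref{thm:main-reconstruction} to extract a subsequence $(F_{k_j})_{j \in \N}$, a bubble tree $\widehat X$ over $X$, and a $K$-quasiregular $\omega$-curve $\widehat F \colon \widehat X \to N$ arising as the locally uniform limit of the nodal resolution $(\widehat F_\ell)_{\ell \in \N}$.

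Next, I would construct the pinching maps $\psi_j \colon \widehat X \to X$ by inverting the iterated nodal surgery used in the proof of Theorem \ref{prop:bubble-induction}. Each nodal surgery step at a singular point $p$ attaches a bubble $\bS^n_p$ via the quasiconformal embeddings $\varphi^{\pm}_{M,p,R}$ of Lemma \ref{lemma:surgery-lemma-new}; composing the inverses of these embeddings across all levels of the bubble tree yields a continuous map $\psi_j$ satisfying $\psi_j|_X = \mathrm{id}_X$, with each bubble subtree rooted at $p \in P$ sent into a small ball $B_X(p, r_j)$ for some $r_j \to 0$ (and analogous shrinking radii for deeper bubbles). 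By construction of the surgery, $F_{k_j} \circ \psi_j$ agrees with the corresponding resolution map $\widehat F_{\ell(j)}$ outside a shrinking neighborhood of the singular set of $\widehat X$.

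The metric $g_j$ on $X$ is then defined to coincide with $g$ outside $\bigcup_{p \in P} B_X(p, R_j)$ for some $R_j > r_j$ with $R_j \to 0$, and inside each $B_X(p, R_j)$ to be a radial conformal modification of $g$ chosen so that $\psi_j$, restricted to the bubble subtree over $p$, becomes an isometry onto $B_X(p, r_j)$ equipped with the induced metric, with a smooth interpolation to $g$ across the collar. Since the maps $\varphi^{\pm}_{M,p,R}$ are compositions of exponential charts with stereographic projections, such an inflation is realizable by a radial conformal factor. With this choice, $\psi_j \colon (\widehat X, d_{\widehat X}) \to (X, d_{g_j})$ is an $\varepsilon_j$-rough isometry with $\varepsilon_j \to 0$, which gives the Gromov--Hausdorff convergence $(X, d_{g_j}) \to (\widehat X, d_{\widehat X})$.

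Finally, the local uniform convergence $F_{k_j} \circ \psi_j \to \widehat F$ splits into two pieces: on $X \setminus \Sing_X(\widehat X)$ we have $\psi_j = \mathrm{id}$ and Theorem \ref{thm:main-reconstruction}(1) applies directly; on each bubble, $F_{k_j} \circ \psi_j$ matches $\widehat F_{\ell(j)}$ up to a vanishing error, and $\widehat F_\ell \to \widehat F$ locally uniformly by definition of a nodal resolution. The main obstacle is the recursive structure of the bubble tree: bubbles may be attached to other bubbles at finite but arbitrary depth, so both the pinching maps and the conformal inflations must be built compatibly across all levels, with the radii shrinking fast enough relative to each iteration of Theorem \ref{prop:bubble-induction}. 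The $1$-quasiconformal renormalizations introduced by Proposition \ref{prop:global-renormalization} must also be absorbed into $\psi_j$, requiring careful bookkeeping through the induction.
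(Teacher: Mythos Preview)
Your approach is essentially correct but takes a more hands-on route than the paper. The paper does not attempt to trace back through the surgery maps $\varphi^{\pm}_{M,p,R}$ at all; instead it invokes a general, purely geometric lemma (Lemma~\ref{lemma:un-pinching} and Corollary~\ref{cor:un-pinching}) stating that \emph{any} nodal Riemannian manifold can be $\varepsilon$-approximated in the Gromov--Hausdorff sense by a smooth Riemannian manifold, obtained by excising small balls around the singular points and gluing in small spheres. Applying this abstractly to $\widehat X$ for a sequence $\varepsilon_j \to 0$ yields the metrics $g_j$ and $\varepsilon_j$-rough isometries $\rho_j \colon (X,g_j) \to \widehat X$ in one stroke, without any reference to the iterative construction of $\widehat X$ or the renormalizations of Proposition~\ref{prop:global-renormalization}. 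What you gain by unwinding the surgery explicitly is a transparent reason for the convergence $F_{k_j} \circ \psi_j \to \widehat F$ on the bubbles (since by design $F_{k_j} \circ \psi_j$ coincides with the resolution maps $\widehat F_{\ell(j)}$ away from shrinking neighborhoods); the paper's sketch simply asserts this convergence. Conversely, the paper's approach sidesteps entirely the recursive bookkeeping you flag as the main obstacle. One small overclaim in your outline: a radial conformal modification of $g$ will not in general make $\psi_j$ an \emph{exact} isometry on the bubble subtree, since the exponential charts are only $(1+\varepsilon)$-quasiconformal rather than conformal; but you only need $\psi_j$ to be an $\varepsilon_j$-rough isometry, and for that the pullback metric (or any nearby smoothing) suffices.
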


The other reformulation of Theorem \ref{thm:main-reconstruction} is in terms of nodal (or tight) convergence. Recall that for a bubble tree $\widehat M$ over an $n$-manifold $M$, a map $\rho \colon M \to \widehat M$ is a \emph{pinching map} if 
\begin{enumerate}
\item there exists an indexed family $(B_S)_{S\in \Strata_M(\widehat M)}$ of closed topological $n$-balls  in $M$ for which $S \subset \rho(B_S)$ and $\rho(\partial B_S)$ is a nodal point in $\Sing(\widehat M)$, and
\item $\rho$ is an embedding in each component of $M\setminus \bigcup_{S\in \Strata_M(\widehat M)} \partial B_S$.
\end{enumerate}

\begin{theorem}[Tight convergence]
\label{thm:main-pinching}
Let $2\le n \le m$, let $M$ be a connected, oriented Riemannian $n$-manifold, let $(N,\omega)$ be an $n$-calibrated Riemannian $m$-manifold having bounded geometry, and $K\ge 1$. 
Let $(F_k \colon M\to N)_{k\in \N}$ be a locally equibounded sequence of $K$-quasiregular $\omega$-curves for which there exists \(x_0 \in M\) such that the orbit \(\set{F_k(x_0) \mid k \in \N}\) has compact closure.
Then there exists a subsequence $(F_{k_j})_{j\in \N}$ of $(F_k)_{k\in\N}$, a bubble tree $\widehat M$ over $M$, a sequence of pinching maps $(\rho_j \colon M \to \widehat M)_{j\in \N}$ on $M$,  and a $K$-quasiregular $\omega$-curve $\widehat F \colon \widehat M \to N$ for which
\[
F_{k_j} \circ \rho_j^{-1}|_{\widehat M \setminus \Sing(\widehat M)} 
\to \widehat F|_{\widehat M \setminus \Sing(\widehat M)} 
\]
locally uniformly.
\end{theorem}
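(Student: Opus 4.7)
The plan is to deduce Theorem~\ref{thm:main-pinching} from Theorem~\ref{prop:bubble-induction} by building the pinching maps explicitly out of the scaled stereographic charts $\varphi^{\pm}_{M,p,R}$ already used in the nodal surgery of Section~\ref{sec:nodal-surgery}. First I apply Theorem~\ref{prop:bubble-induction} to extract a subsequence $(F_{k_j})_{j\in\N}$, a bubble tree $\widehat M$ over $M$ in which each component of $\widehat M\setminus M$ contains only finitely many bubbles, and a nodal resolution $(\widehat F_j\colon \widehat M \to N)_{j\in\N}$ converging locally uniformly to the $K$-quasiregular $\omega$-curve $\widehat F\colon \widehat M \to N$. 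Denote by $P = \Sing_M(\widehat M)\cap M$ the discrete set of bubbling points in $M$, and, for each $p\in P$, let $T_p\subset \widehat M$ denote the finite bubble tree attached to $\widehat M$ at $p$.

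The key geometric ingredient is a continuous surjection $\Phi_p\colon \bar B^n \to T_p$, for each $p\in P$, that sends $\partial B^n$ to $p$, restricts to a homeomorphism from $B^n$ onto $T_p \setminus \{p\}$, and is such that every bubble $S$ inside $T_p$ is covered by $\Phi_p(D_S)$ for some nested closed topological ball $D_S\subset \bar B^n$ whose boundary $\Phi_p(\partial D_S)$ is the nodal point of $\widehat M$ at which $S$ attaches. Such $\Phi_p$ is built recursively along the finite tree $T_p$: the base case of a single sphere is handled by an inverse $\sigma^{-1}$ of a rescaled stereographic projection from $-e_{n+1}$, which collapses $\partial B^n$ to $-e_{n+1}$ and maps $B^n$ onto $\bS^n\setminus\{-e_{n+1}\}$; each additional layer of the tree is attached by carving out a smaller nested ball inside $\bar B^n$, collapsing its boundary to the next nodal point, and iterating. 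Next, pick radii $r_j(p)\downarrow 0$ with the balls $B_M(p,r_j(p))$ pairwise disjoint for each fixed $j$, together with $(1+o(1))$-bilipschitz charts $\psi_{p,j}\colon B_M(p,r_j(p))\to \bar B^n$ centered at $p$ given by rescaled inverse exponentials. Define the pinching map $\rho_j\colon M \to \widehat M$ to be the natural inclusion $M\hookrightarrow \widehat M$ on $M\setminus \bigcup_{p\in P} B_M(p,r_j(p))$ and to equal $\Phi_p\circ \psi_{p,j}$ on each $B_M(p,r_j(p))$. Continuity across $\partial B_M(p,r_j(p))$ holds because both definitions send that boundary sphere to $p\in \widehat M$, and the conditions defining a pinching map follow from the nested ball structure of $\Phi_p$ after pulling the balls $D_S$ back by $\psi_{p,j}$.

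To verify $F_{k_j}\circ \rho_j^{-1} \to \widehat F$ locally uniformly on $\widehat M\setminus \Sing(\widehat M)$, I split a compact test set $K\subset \widehat M\setminus \Sing(\widehat M)$ into the part $K_0 = K\cap M$ (which, for $j$ large enough, lies in $M\setminus \bigcup_{p\in P}B_M(p,r_j(p))$) and the part $K_1 = K\setminus M$ (contained in the bubbles of $\widehat M$). On $K_0$, $\rho_j^{-1}$ is the identity, so the convergence reduces to the statement $F_{k_j}|_{M\setminus P}\to \widehat F|_{M\setminus P}$ from Theorem~\ref{thm:main-reconstruction}. On $K_1$, the restriction of $\rho_j^{-1}$ is, up to the bilipschitz chart $\psi_{p,j}$, the inverse of $\Phi_p$ and hence essentially a composition of the scaled stereographic maps $\varphi^{\pm}_{M,p,r}$ that appear in the nodal surgery of Lemma~\ref{lemma:surgery-lemma-new}. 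The main obstacle, and core of the argument, is checking that the recursive definition of $\Phi_p$ and the choice of radii $r_j(p)$ can be made compatible with the iterated surgeries and renormalizations of Section~\ref{sec:first-surgery}, so that on any compact subset of $K_1$ the composition $F_{k_j}\circ \rho_j^{-1}$ differs from $\widehat F_j$ only by an error tending to zero as $j\to\infty$ (absorbing the renormalizations from Proposition~\ref{prop:global-renormalization} into the definition of $\Phi_p$ layer by layer). Once this identification is in place, the locally uniform convergence $\widehat F_j\to \widehat F$ from the nodal resolution delivers the desired locally uniform convergence of $F_{k_j}\circ\rho_j^{-1}$ on $K_1$, completing the proof.
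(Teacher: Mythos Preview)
Your approach is more explicit than the paper's, but it contains a structural gap. You define $\Phi_p\colon \bar B^n\to T_p$ as a \emph{fixed} parametrization of the finite bubble tree over $p$, and then let the only $j$-dependence in $\rho_j$ sit in the shrinking radii $r_j(p)$ and the charts $\psi_{p,j}$. This is incompatible with the way the nodal resolution $(\widehat F_j)$ is actually built in Sections~\ref{sec:first-surgery}--8: on each bubble, $\widehat F_j$ is obtained from $F_{k_j}$ by precomposing with the surgery charts $(\varphi^{+}_{M,p,r_{p,m_{k_j}}})^{-1}$, whose scale depends on $j$, \emph{and} with the conformal self-maps $h_j$ of Proposition~\ref{prop:global-renormalization}, which are genuinely $j$-dependent affine dilations chosen from the mass distribution of $\star F_{k_j}^*\omega$. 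These reparametrizations cannot be ``absorbed into $\Phi_p$ layer by layer'' if $\Phi_p$ is held fixed; the bubble parametrization itself must vary with $j$. The fix is straightforward but must be stated: replace $\Phi_p$ by $\Phi_{p,j}$ built so that, on each bubble and away from the shrinking surgery collars, $\rho_j^{-1}$ agrees with the composite of $(\varphi^{+})^{-1}$ and the renormalizations used to define $\widehat F_j$ at that layer. Then $F_{k_j}\circ\rho_j^{-1}$ literally coincides with $\widehat F_j$ on compacta of $\widehat M\setminus\Sing(\widehat M)$ for large $j$, and the locally uniform convergence $\widehat F_j\to\widehat F$ finishes the argument as you intended.

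For comparison, the paper proceeds quite differently and much more softly: it does not tie the pinching maps to the surgery data at all, but instead invokes Corollary~\ref{cor:un-pinching} (via Lemma~\ref{lemma:un-pinching}) to produce, for each $\varepsilon_j\to 0$, a Riemannian metric $g_j$ on $M$ and a pinching map $\rho_j\colon (M,g_j)\to\widehat M$ that is an $\varepsilon_j$-rough isometry and the identity on an $\varepsilon_j$-dense open set. The convergence $F_{k_j}\circ\rho_j^{-1}\to\widehat F$ is then declared ``analogous'' to the Gromov--Hausdorff version, Theorem~\ref{thm:main-GH}. Your route is more constructive and makes the link to the nodal resolution transparent, at the cost of having to track the $j$-dependent renormalizations explicitly; the paper's route is shorter and modular, but leaves the verification on the bubbles at the level of an ``observation.''
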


Both of these statements are based on the following well-known observation.
\begin{lemma}
\label{lemma:un-pinching}
Let $X$ be a nodal Riemannian manifold and $\varepsilon>0$. Then there exists a Riemannian manifold $Y$ and an $\varepsilon$-rough isometry $\rho \colon Y \to X$ for which there exists an $\varepsilon$-dense open submanifold $\Omega \subset X$ and a locally isometric embedding $\iota \colon \Omega\to Y$ for which the composition $\rho\circ \iota \colon \Omega \to X$ is identity. Moreover, if each $p\in \Sing(X)$ meets at most two strata of $X$, we may take $\rho$ to be an embedding in the components of $Y\setminus \rho^{-1}(\Sing(X))$ and each $\rho^{-1}(p)$ to be an $n$-sphere for $p\in \Sing(X)$.
\end{lemma}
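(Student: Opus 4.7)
The plan is to construct $Y$ from $X$ by replacing a small neighborhood of each singular point with a smooth Riemannian filling piece, and to define $\rho$ by collapsing these inserted pieces back to the original singular points. First, I would fix $\delta \le \varepsilon/10$, and around each $p \in \Sing(X)$ choose closed bilipschitz balls $\bar{B}_{p,i} := \bar{B}_{M_i}(p, \delta)$, one per stratum $M_i$ through $p$, small enough that the open balls $B_{M_i}(p,\delta)$ are mutually disjoint except at $p$ itself and are contained in normal coordinate charts. The complement $X_0 := X \setminus \bigcup_{p}\bigcup_{i} B_{M_i}(p, \delta)$ is then a smooth Riemannian manifold-with-boundary whose boundary consists of $(n-1)$-spheres $\partial B_{p,i}$, one per pair $(p,i)$.

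Next, for each $p \in \Sing(X)$ with $k$ strata through it, I would choose a compact smooth Riemannian $n$-manifold-with-boundary $(W_p, g_p)$ of diameter at most $\varepsilon/2$ whose boundary has $k$ components, each isometric to the corresponding $\partial B_{p,i}$ (up to an arbitrarily small perturbation of the ambient metric near the seams). Gluing $W_p$ to $X_0$ along these identifications and smoothing the metric in a collar neighborhood produces a smooth Riemannian manifold $Y$. Define $\rho \colon Y \to X$ to be the identity on the $X_0$-part of $Y$ and, on each $W_p$, a continuous extension that sends $\partial W_{p,i}$ to $\partial B_{p,i}$ by the identity, collapses a chosen central submanifold of $W_p$ to $p$, and sends the remaining points into $\bigcup_{i} \bar{B}_{p,i} \subset X$ by a radial interpolation in each stratum. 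Taking $\Omega := X \setminus \bigcup_p \bigcup_i \bar{B}_{p,i}$, which is $\varepsilon$-dense in $X$ since $\delta < \varepsilon$, the tautological inclusion $\iota \colon \Omega \to X_0 \subset Y$ is a local isometry with $\rho \circ \iota = \mathrm{id}_\Omega$; the $\varepsilon$-rough isometry property of $\rho$ follows from $\diam W_p \le \varepsilon/2$ together with the fact that $\rho$ is an isometry off the filling pieces.

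For the second claim, when each singular point $p$ meets at most two strata, I would take $W_p$ to be a ``pinched $n$-sphere'' — a round $n$-sphere of diameter $O(\varepsilon)$ from which two small open disks have been removed — so that after gluing the two boundary $(n-1)$-spheres of $W_p$ to $\partial B_{p,1}$ and $\partial B_{p,2}$, the closed subset $\Sigma_p \subset Y$ formed by the filling piece together with the identified boundary spheres is a smoothly embedded $n$-sphere. I would then define $\rho$ to be a diffeomorphism onto its image on $Y \setminus \bigcup_p \Sigma_p$ and to satisfy $\rho(\Sigma_p) = \{p\}$; its restriction to each component of $Y \setminus \rho^{-1}(\Sing(X))$ is then an embedding onto the corresponding open stratum-neighborhood of $X \setminus \Sing(X)$. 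The main technical obstacle is arranging the filling piece $W_p$ with prescribed boundary metric, bounded diameter, and a smoothly embedded $n$-sphere structure; this is a routine application of collar neighborhood theorems and metric interpolation, but must be done carefully so that the resulting $\rho$ is genuinely an $\varepsilon$-rough isometry and so that $\iota$ is a global, not merely local, isometric embedding.
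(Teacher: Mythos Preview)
Your main construction is essentially the paper's sketch: excise small balls around each singular point in every incident stratum, glue in a small Riemannian filling piece with the right number of boundary components, and let $\rho$ be the identity off the fillings and collapse each filling back toward the singular point. The paper is a bit more concrete---it takes each filling to be a round $n$-sphere of diameter $\varepsilon/(2\pi)$ with $m_p$ small balls removed---but otherwise the argument and the rough-isometry verification are the same.

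There is a genuine gap in your handling of the ``moreover'' clause. Your $W_p$ is a round $n$-sphere with two open $n$-disks removed; topologically this is the cylinder $\bS^{n-1}\times[0,1]$, and your set $\Sigma_p$ (the filling together with its two boundary $(n-1)$-spheres) is still that cylinder, not an $n$-sphere as you assert. More to the point, by declaring $\rho(\Sigma_p)=\{p\}$ you make $\rho^{-1}(p)$ equal to this entire cylinder, and then the components of $Y\setminus\rho^{-1}(\Sing(X))$ are just the components of the interior of $X_0$, on which $\rho$ is the identity---so the removed balls $B_{p,i}$ are never hit and $\rho$ is not even surjective onto $X$. The correct move is to collapse only a single cross-section: send the central slice $\bS^{n-1}\times\{1/2\}$ to $p$ and map the two half-cylinders homeomorphically back onto the excised balls $B_{p,1}$ and $B_{p,2}$. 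Then $\rho^{-1}(p)$ is an $(n-1)$-sphere and $\rho$ restricts to an embedding on each component of the complement, as required. (The statement's ``$n$-sphere'' should in fact read ``$(n-1)$-sphere'': an embedded $n$-sphere in an $n$-manifold is open by invariance of domain, hence a whole component, which cannot be the fibre of an $\varepsilon$-rough isometry over a single point.)
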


Since the existence of such a manifold $Y$ is discussed in many proofs for different versions of Gromov's theorem, we merely recall an idea for the construction; see e.g.~Parker \cite{Parker-JDG-1996}. 

\begin{proof}[Sketch of Proof of Lemma \ref{lemma:un-pinching}]
Let $X$ be a nodal Riemannian manifold and let $\delta>0$ to be determined later. The manifold $Y$ is constructed by independently removing each singular point of $X$ as follows.

Let $p\in \Sing(X)$ be a singular point and let $M_1,\ldots, M_{m_p}$ be the strata of $X$ containing $p$. Then there exists $r_p\in (0,\varepsilon/4)$ for which each exponential map $\exp_p \colon B_{T_p M_i}(0,2r_p) \to B_{M_i}(p,2r_p)$ is $(1+\delta)$-bilipschitz. By passing to a smaller radius $r_p$ if necessary, we may assume that there exists mutually disjoint balls $B_S(x_1,2r_p),\ldots, B_S(x_{m_p},2r_p)$ on the standard $n$-sphere $S=\bS^n(\varepsilon/(4\pi))$ of radius $\varepsilon/(4\pi)$.

Next we remove the open sets $U_X = B_{M_1}(p,r_p) \cup \cdots \cup B_{M_{m_p}}(p,r_p)$ and $U_S = B_S(x_1,r_p)\cup \cdots \cup B_S(x_{m_p},r_p)$ from $X$ and $S$, respectively, and glue the spheres $\partial B_{M_i}(p,r_p)$ and $\partial B_S(x_i,r_p)$ together with the composition of the exponential mappings. By taking $\delta$ small enough, the obtained manifold $Y$ has a natural smooth structure and has a length metric $d$ for which $X\setminus U_X\subset Y$ and $S\setminus U_X \subset Y$ are smooth submanifolds with boundary and the metric space $(Y,d)$ has Gromov-Hausdorff distance at most $\varepsilon/2$ to $X$. Since $Y$ has a Riemannian metric in $X\setminus U_X$ and $S\setminus U_S$, we have that -- by taking $\delta$ even smaller -- we may smooth these Riemannian metrics to obtain a Riemannian metric $g_Y$ on $Y$ for which the mapping $\rho \colon Y \to X$, which is identity on $V_X=B_{M_1}(p,2r_p)\cup \cdots \cup B_{M_{m_p}}(p,2r_p)$ and collapses the complement of $U_S$ to the point $p$, is an $\varepsilon$-rough isometry. 

If the singular points of $X$ meet only two strata, we may replace the balls $B_S(p,2r_p)$ and $B_S(p,2r_p)$ by hemispheres. This yields the wanted additional property. 
\end{proof}

\begin{corollary}
\label{cor:un-pinching}
Let $\widehat X$ be a bubble tree over a Riemannian manifold $M$ and $\varepsilon>0$. Then there exists a Riemannian metric $g$ on $M$, an $\varepsilon$-rough isometry $\rho \colon (M,g) \to \widehat X$, and an $\varepsilon$-dense open set $\Omega \subset M$ for which $\rho|_\Omega \colon \Omega \to \widehat X$ is the identity. Moreover, if each singular point $p\in \Sing(\widehat X)$ meets at most two strata of $\widehat X$, then we may take $\rho \colon M\to \widehat X$ to be, in addition, a pinching map.
\end{corollary}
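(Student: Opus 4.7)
The plan is to apply Lemma~\ref{lemma:un-pinching} directly to the bubble tree $\widehat X$, viewed as a nodal Riemannian manifold, and then identify the resulting Riemannian manifold $Y$ with $M$ smoothly. Since $M$ is an ordinary (non-nodal) Riemannian manifold, every singular point of $\widehat X$ arises from attaching some bubble $\bS^n_p$ to its unique parent stratum, and hence meets exactly two strata of $\widehat X$. The moreover clause of Lemma~\ref{lemma:un-pinching} therefore yields a Riemannian manifold $Y$, an $\varepsilon$-rough isometry $\rho \colon Y \to \widehat X$, an $\varepsilon$-dense open submanifold $\Omega \subset \widehat X$ with a locally isometric embedding $\iota \colon \Omega \to Y$ satisfying $\rho \circ \iota = \id_\Omega$, the property that $\rho$ is an embedding on each component of $Y \setminus \rho^{-1}(\Sing(\widehat X))$, and a sphere as the preimage $\rho^{-1}(p)$ of each singular point $p$.

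The key step will be to show that $Y$ is diffeomorphic to $M$. Each local surgery in the lemma's construction at a singular point $p$ removes a small ball from the bubble $\bS^n_p$ and from its parent stratum, and glues the two boundary spheres along a common spherical region (two hemispheres meeting along a common equator in the two-strata case). Since each bubble is diffeomorphic to $\bS^n$ and connected sum with $\bS^n$ preserves the diffeomorphism type of any $n$-manifold, each such surgery leaves the diffeomorphism type of the parent stratum unchanged. Processing the bubble tree from leaves towards the root $M$ (or equivalently all singular points simultaneously, as in the lemma), every bubble is absorbed without changing the topology, yielding $Y \cong M$ smoothly; the smoothness is ensured by the lemma's exponential-coordinate gluing together with the subsequent smoothing of the Riemannian metrics.

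Given a diffeomorphism $\Phi \colon M \to Y$, I would set $g = \Phi^* g_Y$ and $\rho_\Phi = \rho \circ \Phi \colon (M, g) \to \widehat X$, which inherits from $\rho$ all the required properties: it is an $\varepsilon$-rough isometry, $\Omega$ is $\varepsilon$-dense in $\widehat X$, and $\rho_\Phi$ composed with $\Phi^{-1} \circ \iota$ is the identity on $\Omega$. For the moreover clause on pinching maps, each preimage sphere $\Phi^{-1}(\rho^{-1}(p)) \subset M$ bounds a closed ball $B_S \subset M$ (the bubble side of the pinching), and $\rho_\Phi(B_S)$ contains the bubble $S$ while $\rho_\Phi(\partial B_S) = \{p\}$; combined with the embedding property of $\rho_\Phi$ away from these spheres, the balls $B_S$ realize the pinching-map conditions.

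The principal obstacle is the smooth identification $Y \cong M$: while $A \# \bS^n \cong A$ is a classical topological fact, extending it smoothly and iterating it when bubbles carry further bubbles requires tracking the smooth structure carefully across all surgeries, especially when the bubble tree is infinite. Fortunately, the explicit exponential-coordinate construction in Lemma~\ref{lemma:un-pinching} handles this uniformly, with the metric smoothing being local and summable across the discrete set of singular points.
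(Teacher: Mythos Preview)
Your approach is essentially the paper's: apply Lemma~\ref{lemma:un-pinching} to $\widehat X$, observe that the resulting manifold $Y$ is an iterated connected sum of $M$ with $n$-spheres and hence diffeomorphic to $M$, and pull everything back via this diffeomorphism. The paper's proof is a terse two-sentence version of exactly this argument.

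One small gap to flag: your opening claim that ``every singular point of $\widehat X$ \ldots meets exactly two strata'' is not justified by the abstract definition of a bubble tree over $M$. Nothing in that definition forbids several bubbles from being wedged at a single point of $M$ or of a lower bubble (the simply-connected condition on components of $\widehat X\setminus M$ does not rule this out for $n\ge 2$). This is precisely why the corollary states the two-strata hypothesis separately in its moreover clause. For the first part of the corollary you should therefore invoke only the main conclusion of Lemma~\ref{lemma:un-pinching}, not its moreover clause; your connected-sum argument that $Y\cong M$ still goes through, since at a singular point meeting $m_p$ strata the lemma's construction produces a connected sum of those strata with an auxiliary sphere, and all strata except the one leading back to $M$ are spheres. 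Reserve the moreover clause of the lemma for the moreover clause of the corollary, where the two-strata hypothesis is explicitly assumed.
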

\begin{proof}
Since $\widehat X$ is a (locally finite) bubble tree over $M$, we have that manifold $Y$ constructed in the proof of Lemma \ref{lemma:un-pinching} is an iterated connected sum of $M$ with $n$-spheres and hence diffeomorphic to $\widehat X$. By the proof of Lemma \ref{lemma:un-pinching}, the preimage $\rho^{-1}(p)$ of each $p\in \Sing(\widehat X)$ under $\rho$ in Lemma \ref{lemma:un-pinching} is an $n$-sphere bounding a topological $n$-ball in $M$, the claim now follows from Lemma \ref{lemma:un-pinching} and its proof.
\end{proof}

Having Corollary \ref{cor:un-pinching} at our disposal, Theorems \ref{thm:main-GH} and \ref{thm:main-pinching} are almost immediate consequences of Theorem \ref{thm:main-reconstruction}. For this reason, we only indicate the steps.

\begin{proof}[Sketches of proofs of Theorems \ref{thm:main-GH} and \ref{thm:main-pinching}]
For the proof of Theorem \ref{thm:main-GH}, let $\widehat X$ be a bubble tree over $X$, $(F_{k_j})_{j\in \N}$ a subsequence of $(F_k)_{k\in\N}$, and let $\widehat F \colon \widehat X\to N$ as in Theorem \ref{thm:main-reconstruction}. Let also $(\varepsilon_j)_{j\in \N}$ be a positive sequence tending to zero.
By the first part of Corollary \ref{cor:un-pinching}, we have a sequence of Riemannian metrics $(g_j)_{j\in\N}$ on $X$ and $\varepsilon_j$-rough isometries $\rho_j \colon (X,d_{g_j}) \to \widehat X$. The convergence $(X,g_j,F_{k_j}) \GHto (\widehat X,\widehat F)$ of mapping packages follows now by fixing a sequence $(\psi_j \colon \widehat X \to X)_{j\in\N}$ of rough inverses of maps $\rho_j$ and observing that $F_{k_j} \circ \psi_j \to \widehat F$ locally uniformly as $j\to \infty$. This concludes the sketch of a proof of Theorem \ref{thm:main-GH}.

For a proof of Theorem \ref{thm:main-pinching}, we consider a sequence $(F_k \colon M \to N)_{k\in \N}$ of $K$-quasiregular $\omega$-curves on a Riemannian $n$-manifold $M$ instead of a nodal manifold $X$. The proof is now analogous to the argument above using the additional information provided by Corollary \ref{cor:un-pinching} that $\varepsilon_j$-rough isometries $\rho_j \colon (X,g_j) \to \widehat X$ can be taken to be pinching maps.
\end{proof}

\bibliographystyle{abbrv}
\bibliography{Gromov-compactness-Pankka-Pim--arXiv}

\begin{thebibliography}{10}

\bibitem{Pansu-1994}
M.~Audin and J.~Lafontaine, editors.
\newblock {\em Holomorphic curves in symplectic geometry}, volume 117 of {\em
  Progress in Mathematics}.
\newblock Birkh\"auser Verlag, Basel, 1994.

\bibitem{bojarski1983analytical}
B.~Bojarski and T.~Iwaniec.
\newblock Analytical foundations of the theory of quasiconformal mappings in
  {${\bf R}\sp{n}$}.
\newblock {\em Ann. Acad. Sci. Fenn. Ser. A I Math.}, 8(2):257--324, 1983.

\bibitem{BEHWZ-2003}
F.~Bourgeois, Y.~Eliashberg, H.~Hofer, K.~Wysocki, and E.~Zehnder.
\newblock Compactness results in symplectic field theory.
\newblock {\em Geom. Topol.}, 7:799--888, 2003.

\bibitem{Bridson-Haefliger--book}
M.~R. Bridson and A.~Haefliger.
\newblock {\em Metric spaces of non-positive curvature}, volume 319 of {\em
  Grundlehren der mathematischen Wissenschaften [Fundamental Principles of
  Mathematical Sciences]}.
\newblock Springer-Verlag, Berlin, 1999.

\bibitem{Broder-Iliashenko-Madnick}
K.~{Broder}, A.~{Iliashenko}, and J.~{Madnick}.
\newblock {Hyperbolicity and Schwarz Lemmas in Calibrated Geometry}.
\newblock {\em arXiv e-prints}, page arXiv:2507.16313, July 2025.

\bibitem{Burago-Burago-Ivanov--book}
D.~Burago, Y.~Burago, and S.~Ivanov.
\newblock {\em A course in metric geometry}, volume~33 of {\em Graduate Studies
  in Mathematics}.
\newblock American Mathematical Society, Providence, RI, 2001.

\bibitem{Buser-Karcher-book}
P.~Buser and H.~Karcher.
\newblock {\em Gromov's almost flat manifolds}, volume~81 of {\em
  Ast\'erisque}.
\newblock Soci\'et\'e{} Math\'ematique de France, Paris, 1981.

\bibitem{Chavel-book}
I.~Chavel.
\newblock {\em Riemannian geometry}, volume~98 of {\em Cambridge Studies in
  Advanced Mathematics}.
\newblock Cambridge University Press, Cambridge, second edition, 2006.
\newblock A modern introduction.

\bibitem{Cheng-Karigiannis-Madnick}
D.~R. Cheng, S.~Karigiannis, and J.~Madnick.
\newblock Bubble tree convergence of conformally cross product preserving maps.
\newblock {\em Asian J. Math.}, 24(6):903--984, 2020.

\bibitem{Cheng-Karigiannis-Madnick--variational}
D.~R. Cheng, S.~Karigiannis, and J.~Madnick.
\newblock A variational characterization of calibrated submanifolds.
\newblock {\em Calc. Var. Partial Differential Equations}, 62(6):Paper No. 174,
  38, 2023.

\bibitem{Dyer-Vegter-Wintraecken}
R.~Dyer, G.~Vegter, and M.~Wintraecken.
\newblock Riemannian simplices and triangulations.
\newblock {\em Geom. Dedicata}, 179:91--138, 2015.

\bibitem{Gromov-Invent-1985}
M.~Gromov.
\newblock Pseudo holomorphic curves in symplectic manifolds.
\newblock {\em Invent. Math.}, 82(2):307--347, 1985.

\bibitem{harvey1982calibrated}
R.~Harvey and H.~B. Lawson, Jr.
\newblock Calibrated geometries.
\newblock {\em Acta Math.}, 148:47--157, 1982.

\bibitem{Heikkila-PLMS}
S.~Heikkil\"a.
\newblock Quasiregular curves and cohomology.
\newblock {\em Proc. Lond. Math. Soc. (3)}, 128(5):Paper No. e12602, 27, 2024.

\bibitem{heikkila2023rham}
S.~Heikkil\"a and P.~Pankka.
\newblock De {R}ham algebras of closed quasiregularly elliptic manifolds are
  {E}uclidean.
\newblock {\em Ann. of Math. (2)}, 201(2):459--488, 2025.

\bibitem{heinonen2018nonlinear}
J.~Heinonen, T.~Kilpel\"{a}inen, and O.~Martio.
\newblock {\em Nonlinear potential theory of degenerate elliptic equations}.
\newblock Dover Publications, Inc., Mineola, NY, 2006.
\newblock Unabridged republication of the 1993 original.

\bibitem{HKSTsobolevSpaces}
J.~Heinonen, P.~Koskela, N.~Shanmugalingam, and J.~T. Tyson.
\newblock {\em Sobolev spaces on metric measure spaces}, volume~27 of {\em New
  Mathematical Monographs}.
\newblock Cambridge University Press, Cambridge, 2015.
\newblock An approach based on upper gradients.

\bibitem{Hummel-book-1997}
C.~Hummel.
\newblock {\em Gromov's compactness theorem for pseudo-holomorphic curves},
  volume 151 of {\em Progress in Mathematics}.
\newblock Birkh\"auser Verlag, Basel, 1997.

\bibitem{ikonen2023pushforward}
T.~Ikonen.
\newblock Pushforward of currents under {S}obolev maps.
\newblock {\em arXiv pre-print 2303.15003}, 2023.

\bibitem{ikonen2024remove}
T.~Ikonen.
\newblock Quasiregular {C}urves: {R}emovability of {S}ingularities.
\newblock {\em J. Geom. Anal.}, 35(10):Paper No. 315, 2025.

\bibitem{Ikonen-Pankka-2024}
T.~{Ikonen} and P.~{Pankka}.
\newblock {Liouville's theorem in calibrated geometries}.
\newblock {\em arXiv e-prints}, page arXiv:2410.02722, Oct. 2024.

\bibitem{Ivrii}
O.~Ivrii.
\newblock On meromorphic functions whose image has finite spherical area.
\newblock {\em Studia Math.}, 272(2):121--137, 2023.

\bibitem{Joyce-book-2007}
D.~D. Joyce.
\newblock {\em Riemannian holonomy groups and calibrated geometry}, volume~12
  of {\em Oxford Graduate Texts in Mathematics}.
\newblock Oxford University Press, Oxford, 2007.

\bibitem{kangasniemi2021notes}
I.~{Kangasniemi}.
\newblock {Notes on quasiregular maps between Riemannian manifolds}.
\newblock {\em arXiv e-prints}, page arXiv:2109.01638, Sept. 2021.

\bibitem{Luisto-Prywes}
R.~Luisto and E.~Prywes.
\newblock Open and discrete maps with piecewise linear branch set images are
  piecewise linear maps.
\newblock {\em J. Lond. Math. Soc. (2)}, 103(3):1186--1207, 2021.

\bibitem{Meier-Vikman-Wenger-2025-bubbling-harmonic-2-spheres}
D.~{Meier}, N.~{Vikman}, and S.~{Wenger}.
\newblock {Energy minimizing harmonic 2-spheres in metric spaces}.
\newblock {\em arXiv e-prints}, page arXiv:2503.08553, Mar. 2025.

\bibitem{Morgan-book-2016}
F.~Morgan.
\newblock {\em Geometric measure theory}.
\newblock Elsevier/Academic Press, Amsterdam, fifth edition, 2016.
\newblock A beginner's guide, Illustrated by James F. Bredt.

\bibitem{Onninen-Pankka-qrc-higher-integrability}
J.~Onninen and P.~Pankka.
\newblock Quasiregular curves: {H}\"older continuity and higher integrability.
\newblock {\em Complex Anal. Synerg.}, 7(3):Paper No. 26, 9, 2021.

\bibitem{Pang-Nevo-Zalcman}
X.~Pang, S.~Nevo, and L.~Zalcman.
\newblock {Quasinormal families of meromorphic functions}.
\newblock {\em Revista Matemática Iberoamericana}, 21(1):249–262, 2005.

\bibitem{pankka2020qrcs}
P.~Pankka.
\newblock Quasiregular curves.
\newblock {\em Ann. Acad. Sci. Fenn. Math.}, 45(2):975--990, 2020.

\bibitem{pankka-souto2023bubbleqr}
P.~Pankka and J.~Souto.
\newblock Bubbling of quasiregular maps.
\newblock {\em Math. Ann.}, 385(3-4):1577--1610, 2023.

\bibitem{Parker-JDG-1996}
T.~H. Parker.
\newblock Bubble tree convergence for harmonic maps.
\newblock {\em J. Differential Geom.}, 44(3):595--633, 1996.

\bibitem{Rickman-book-1993}
S.~Rickman.
\newblock {\em {Quasiregular mappings}}, volume~26 of {\em {Ergebnisse der
  Mathematik und ihrer Grenzgebiete (3) [Results in Mathematics and Related
  Areas (3)]}}.
\newblock Springer-Verlag, Berlin, 1993.

\bibitem{Ruan-Tian-1995}
Y.~Ruan and G.~Tian.
\newblock A mathematical theory of quantum cohomology.
\newblock {\em J. Differential Geom.}, 42(2):259--367, 1995.

\bibitem{Simon-geo-meas-book-1983}
L.~Simon.
\newblock {\em Lectures on geometric measure theory}, volume~3 of {\em
  Proceedings of the Centre for Mathematical Analysis, Australian National
  University}.
\newblock Australian National University, Centre for Mathematical Analysis,
  Canberra, 1983.

\bibitem{Wolfson-1988}
J.~G. Wolfson.
\newblock Gromov's compactness of pseudo-holomorphic curves and symplectic
  geometry.
\newblock {\em J. Differential Geom.}, 28(3):383--405, 1988.

\end{thebibliography}

\end{document}